\newtheorem{theorem}{Theorem}[section]
\newtheorem{proposition}[theorem]{Proposition}
\newtheorem{lemma}[theorem]{Lemma}
\newtheorem{corollary}[theorem]{Corollary}
\theoremstyle{definition}
\newtheorem{definition}[theorem]{Definition}
\theoremstyle{remark}
\newtheorem{remark}[theorem]{Remark}
\newtheorem{example}[theorem]{Example}
\numberwithin{equation}{section}
\begin{document}

\title{The chromatic Brauer Category and its linear representations}

\author[L.F. M\"{u}ller]{L. Felipe M\"{u}ller}
\address{Mathematisches Institut, Universit\"{a}t Heidelberg, Im Neuenheimer Feld 205, 69120 Heidelberg, Germany}
\email[corresponding author]{lmueller@mathi.uni-heidelberg.de}
\author[D.J. Wrazidlo]{Dominik J. Wrazidlo}
\address{Institute of Mathematics for Industry, Kyushu University, Motooka 744, Nishi-ku, Fukuoka 819-0395, Japan}
\email{d-wrazidlo@imi.kyushu-u.ac.jp}

\subjclass[2010]{Primary 18D10, 18A22, 05E10, 15A69; Secondary 57R56, 57R45, 16Y60.}

\date{\today.}

\keywords{Monoidal categories, Brauer category, Schauenburg tensor product, topological quantum field theories, semirings, fold maps, Kervaire spheres. \\
\quad An older version of this manuscript can be found at \url{http://export.arxiv.org/pdf/1902.05517}}

\begin{abstract}
The Brauer category is a symmetric strict monoidal category that arises as a (horizontal) categorification of the Brauer algebras in the context of Banagl's framework of positive topological field theories (TFTs).
We introduce the chromatic Brauer category as an enrichment of the Brauer category in which the morphisms are component-wise labeled.
Linear representations of the (chromatic) Brauer category are symmetric strict monoidal functors into the category of real vector spaces and linear maps equipped with the Schauenburg tensor product.
We study representation theory of the (chromatic) Brauer category, and classify all its faithful linear representations.
As an application, we use indices of fold lines to construct a refinement of Banagl's concrete positive TFT based on fold maps into the plane.
\end{abstract}

\maketitle
\tableofcontents

\section{Introduction}

The Brauer algebras $D_{m}$ have first appeared in work of Brauer \cite{bra} on representation theory of the orthogonal group $O(n)$.
In view of Schur-Weyl duality, they replace the role played by the group algebras of symmetric groups in representation theory of the general linear group.
Generators of $D_{m}$ are the diagrams consisting of $2m$ vertices and $m$ edges, where the vertices are arranged in two parallel rows of $m$ vertices, and each vertex lies in the boundary of exactly one edge.
Given a commutative ground ring $k$ with unit, $D_{m}$ is the $k(x)$-algebra freely generated as $k(x)$-module by those diagrams.
Multiplication is induced by concatenation of diagrams, where each arising free loop component gives rise to an additional multiplication with the indeterminate $x$.
A signed variant of Brauer algebras has been studied in \cite{parkam}.
Brauer algebras play an important role in knot theory, where, for instance, Birman-Murakami-Wenzl algebras \cite{birwen, mur, wen}, which are the quantized version of Brauer algebras, have been used to construct generalizations of the Jones polynomial.

We are concerned with a natural (horizontal) categorification $\textbf{Br}$ of Brauer's algebras that has been used by Banagl \cite{ban2, ban3} in search of new topological invariants in the context of his framework of positive topological field theory (TFT).
A similar category has been considered independently by Lehrer-Zhang \cite{leh} in a modern categorical approach to the invariant theory of the orthogonal and symplectic groups.
Roughly speaking, morphisms in the so-called \emph{Brauer category} $\textbf{Br}$ are represented by $1$-dimensional unoriented tangles in a high-dimensional Euclidean space.
In particular, generators and relations for the strict monoidal category $\textbf{Br}$ have been listed in \cite{ban3} (compare also \cite{leh}) by adapting the methods that are used by Turaev \cite{tur} for deriving a presentation for the category of tangle diagrams.

Let us discuss the main ideas behind Banagl's notion of positive TFT, and the role of the Brauer category $\textbf{Br}$ and its representation theory in this context.
By definition, the axioms for positive TFT \cite{ban2} differ from Atiyah's original axioms for TFT \cite{ati} in that they are formulated over semirings instead of rings.
Recall that semirings are not required to have additive inverse (``negative'') elements.
In computer science, semirings and related structures have been studied by Eilenberg \cite{eil} in the context of automata theory and formal languages.
The essential advantage of positive TFTs over usual TFTs is that so-called Eilenberg completeness of certain semirings can be used to give a rigorous construction of positive TFTs of arbitrary dimension.
This construction is implemented by Banagl in a process he calls \emph{quantization} that requires so-called fields and an action functional as input (see \Cref{general framework}).
Inspiration comes from theoretical quantum physics, where the state sum is expressed by fields and an action functional via the Feynman path integral.

In \cite{ban3}, Banagl applies his framework of quantization to produce in arbitrary dimension an explicit positive TFT for smooth manifolds.
The construction uses singularity theory of so-called fold maps, and the resulting state sum invariants can distinguish exotic smooth spheres from the standard sphere.
Now, in this concrete setting, the role of fields is played by certain fold maps into the plane, and the action functional assigns to such fields morphisms in the Brauer category $\textbf{Br}$ by extracting the $1$-dimensional patterns that arise from the singular locus of fold maps.
However, as pointed out in Section 8 of \cite{ban2}, it is desirable to compose such a category-valued action functional with a symmetric strict monoidal functor from \textbf{Br} to the category \textbf{Vect} of real vector spaces and linear maps.
Note that one requires the category \textbf{Vect} to be equipped with a symmetric \emph{strict} monoidal structure, which is provided by using the Schauenburg tensor product \cite{sch}.
In this way, the Brauer category serves only as an intermediary structure, and the state sum of the resulting positive TFT will become accessible through linear algebra.
Of course, the loss of information should be kept at a minimum during this linearization process, which is motivation for studying faithfulness of such linear representations $\mathbf{Br} \rightarrow \mathbf{Vect}$.
This knowledge is required when it comes to the explicit computation of state sum invariants (compare Section 6.3 and Section 10.5 in \cite{wra} as well as Remark 9.5 in \cite{wra2}). \\

In this paper we determine not only the faithful representations of \textbf{Br}, but also those of the \emph{chromatic Brauer category} \textbf{cBr} which will be introduced in \Cref{The chromatic Brauer category} as an enrichment of \textbf{Br} in which morphisms are component-wise labeled (``colored'') by elements of a countable index set.
Hence, in contrast to the Brauer category, isomorphic objects of the chromatic Brauer category need not be equal.
Our reason for considering \textbf{cBr} is that it can be used to construct a refinement of Banagl's positive TFT based on fold maps in the following way (see \Cref{Linearization of the fold map positive TFTs}).
In analogy with the index of non-degenerate critical points in Morse theory, one can associate a (reduced) index to the singularities of a fold map.
Those fold indices are intrinsically defined, locally constant along the singular set, and carry topological information about the source manifold.
For fold maps from $n$-dimensional source manifolds into the plane, the set of possible fold indices is $\{0, \dots, \lfloor (n-1)/2 \rfloor\}$.
We will modify Banagl's original construction by defining a \textbf{cBr}-valued action functional which additionally remembers indices of fold lines as labels from the set $\mathbb{N}=\{0,1,2,...\}$ of natural numbers.

Concerning linear representations of \textbf{Br}, Banagl has shown in Proposition 2.22 of \cite{ban3} that there exist linear representations that are faithful on loops.
This suffices for his purpose to show that state sum invariants of the positive TFT are able to detect exotic smooth structures on spheres.
As a much more general result, we have the following

\begin{theorem}[\cite{mue, wra}]\label{main1 theorem}
Let $Y \colon \mathbf{Br} \rightarrow \mathbf{Vect}$ be a symmetric strict monoidal functor from the Brauer category into the category of real vector spaces and linear maps (equipped with the Schauenburg tensor product).
Then the vector space $Y([1])$ has finite dimension $d$.
Moreover the functor $Y$ is faithful if and only if $d\geq 2$.
\end{theorem}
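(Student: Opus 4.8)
The plan is as follows. Since $Y$ is strict monoidal, setting $V:=Y([1])$ we get $Y([n])=V^{\otimes n}$ and $Y([0])=\mathbb{R}$, and $Y$ is determined by its values on the generators of $\mathbf{Br}$: as a symmetric strict monoidal category, $\mathbf{Br}$ is generated by the cup $\cup\colon[0]\to[2]$, the cap $\cap\colon[2]\to[0]$ and the transposition $\tau\colon[2]\to[2]$, subject to the symmetry/braid relations, the zig-zag (``snake'') relations, and $\cap\circ\tau=\cap$ and $\tau\circ\cup=\cup$; in particular $\mathrm{Hom}_{\mathbf{Br}}([0],[0])\cong(\mathbb{N},+)$ is freely generated by the loop $\ell:=\cap\circ\cup$. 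Being symmetric monoidal, $Y$ sends $\tau$ to the symmetry of $\mathbf{Vect}$ (the ordinary flip, which is also the symmetry for the Schauenburg tensor product); write $\beta:=Y(\cap)\colon V\otimes V\to\mathbb{R}$ and $\omega:=Y(\cup)(1)\in V\otimes V$. First I would read off the basic structure. The relations $\cap\circ\tau=\cap$ and $\tau\circ\cup=\cup$ force $\beta$ and $\omega$ to be symmetric, and the snake relations say that $\omega$ is the copairing of $\beta$; in particular $\beta$ is nondegenerate, since $\beta(x,-)=0$ forces $x=0$ by the snake identity. Writing $\omega=\sum_{k=1}^{N}v_k\otimes w_k$ as a finite sum in the algebraic tensor product, the snake identity $x=\sum_k\beta(x,v_k)w_k$ shows that $w_1,\dots,w_N$ span $V$, so $d:=\dim V<\infty$. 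Finally $Y(\ell)=\beta(\omega)$ is multiplication by $\operatorname{tr}(\mathrm{id}_V)=d$, so, writing $\delta:=d$, the functor acts on $\mathrm{Hom}_{\mathbf{Br}}([0],[0])$ by $r\mapsto\delta^{r}$.

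The ``only if'' direction is then immediate: if $d\le 1$ then $\delta\in\{0,1\}$, the map $r\mapsto\delta^{r}$ is not injective on $\mathbb{N}$, and $Y$ already fails to be injective on $\mathrm{Hom}_{\mathbf{Br}}([0],[0])$.

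For the ``if'' direction, assume $d\ge 2$. Choosing a basis $e_1,\dots,e_d$ of $V$ with $\beta(e_i,e_j)=\varepsilon_i\delta_{ij}$, $\varepsilon_i=\pm1$, the functor $Y$ is precisely the classical ``orthogonal'' representation of $\mathbf{Br}$ on the tensor powers of $(\mathbb{R}^d,\beta)$. For a Brauer diagram $D\colon[m]\to[n]$, viewed as a perfect matching of the $m+n$ boundary points, the classical matrix-coefficient formula says that the entry of $Y(D)$ between the basis vectors of $V^{\otimes m}$ and $V^{\otimes n}$ determined by a ``state'' (an assignment of indices in $\{1,\dots,d\}$ to the $m+n$ points) equals $\pm1$ if the state is constant along every edge of $D$ and $0$ otherwise; in particular the all-ones state shows $Y(D)\ne0$. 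An arbitrary morphism $[m]\to[n]$ of $\mathbf{Br}$ is a pair $(D,r)$ consisting of a Brauer diagram and a loop count, and $Y(D,r)=\delta^{r}\,Y(D)$. Suppose $Y(D,r)=Y(D',r')$. If $D=D'$, then $Y(D)\ne 0$ together with $\delta=d\ge2$ forces $r=r'$. If $D\ne D'$, I claim there is a state that is constant along every edge of $D$ but not along every edge of $D'$; granting this, the corresponding entry of $Y(D,r)$ equals $\pm\delta^{r}\ne0$ while that of $Y(D',r')$ equals $0$, a contradiction. Hence $Y$ is faithful.

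It remains to establish the combinatorial claim, which is the combinatorial heart of the argument. Given distinct perfect matchings $D\ne D'$ of a finite set $S$, pick an edge $\{a,b\}\in D\setminus D'$, let $c$ be the $D'$-partner of $a$ (so $c\ne b$), and let $c'$ be the $D$-partner of $c$; one checks $\{a,b\}\cap\{c,c'\}=\emptyset$ (for instance $c'=a$ would force $c=b$, and $c'=b$ would force $c=a$). Since $d\ge2$ we may define the state that takes the value $2$ on $\{c,c'\}$ and $1$ elsewhere; it is constant on every edge of $D$ (each such edge is either $\{c,c'\}$ itself or avoids both $c$ and $c'$), but it is not constant on the edge $\{a,c\}\in D'$. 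The two points at which one must be careful are: verifying the above presentation of $\mathbf{Br}$ and the attendant matrix-coefficient formula — in particular that the Schauenburg symmetry of $\mathbf{Vect}$ really is the ordinary flip, so that $Y$ genuinely is the orthogonal representation — and checking that no combination of the $\varepsilon$-signs can make a coefficient of a ``compatible'' state vanish, which cannot happen since each edge contributes a factor $\pm1$.
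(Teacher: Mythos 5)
Your proposal is correct, but it takes a genuinely different route from the paper. The paper never argues with matrix coefficients: it proves the chromatic statement (\Cref{main theorem}) and obtains the result for $\mathbf{Br}$ as the monochromatic special case, where the arithmetic condition $\prod_k d_k^{l_k}=1\Rightarrow l_k=0$ degenerates to $d\geq 2$. Its proof splits into a purely categorical step, \Cref{prop:loops} and \Cref{faithfulOnLoops}, which ``closes up'' a pair of loop-free morphisms via ${}^o\varphi\circ\psi^o$ and shows that faithfulness is equivalent to faithfulness on loops, and a linear-algebra step, \Cref{y is a faithful functor}, which separates loop multiplicities by computing the trace $\operatorname{Tr}\bigl(Y(\varphi_0^{\operatorname{op}}\circ\varphi_0)\bigr)$ and invoking \Cref{lemma trace}. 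Your argument instead fixes a Sylvester basis diagonalizing the pairing $\beta$ and separates any two distinct morphisms of $\mathbf{Br}$ by a single explicit matrix entry: the combinatorial state you construct (value $2$ on the $D$-edge $\{c,c'\}$, value $1$ elsewhere, requiring $d\geq 2$) plays exactly the role that \Cref{prop:loops} plus the trace computation play in the paper, and the injectivity of $r\mapsto d^r$ replaces the trace comparison that pins down the loop counts. What your route buys is concreteness and self-containedness for $\mathbf{Br}$ (an explicit separating coefficient, and the ``only if'' direction read off directly from $\operatorname{Hom}_{\mathbf{Br}}([0],[0])$); what the paper's route buys is a basis-free argument that works verbatim in the chromatic setting, where the faithfulness criterion on the sequence $(d_k)$ is genuinely arithmetic and a basis-by-basis state argument would be clumsier to organize. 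The two points you flag as needing care are indeed the only places where routine verification is required, and both are consistent with the paper: the generators (elementary cup, cap, braiding) and the fact that a morphism is a perfect matching together with a free-loop count are exactly the description in \Cref{generators and relations of cbr} and the surrounding discussion, and the braiding on $(\mathbf{Vect},\odot)$ is the ordinary flip $v\otimes w\mapsto w\otimes v$, as stated in \Cref{Linear representations of the chromatic Brauer category}; moreover, since your diagram $D$ is loop-free, the matrix entry at a compatible state is a single product of factors $\pm 1$ (no internal summation can occur), so no cancellation of signs is possible, as you note.
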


What is more, we will show \Cref{main theorem} below.
Since the Brauer category is naturally (monochromatically) embedded in the chromatic Brauer category, \Cref{main1 theorem} is implied by \Cref{main theorem}.
(To conclude this, one has to use that any linear representation of \textbf{Br} can be extended to one of \textbf{cBr} by means of our structure results \Cref{theorem linear representations of cBr} and the corresponding result for \textbf{Br}.)

In preparation of the statement of our result on linear representations of
\textbf{cBr}, note that the objects of \textbf{cBr} that are mapped to the
object $[1]$ of \textbf{Br} under the forgetful functor $\mathbf{cBr}
\rightarrow \mathbf{Br}$ are parametrized by the labels $k \in \mathbb{N}$,
say $([1],\underline k)$.

\begin{theorem}\label{main theorem}
Let $Y \colon \mathbf{cBr} \rightarrow \mathbf{Vect}$ be a symmetric strict monoidal functor from the chromatic Brauer category into the category of real vector spaces and linear maps (equipped with the Schauenburg tensor product).
Then, for each $k\in\mathbb{N}$, the vector space $Y(([1],\underline k))$ is of finite dimension, say $d_k$.
Suppose that $d_k > 0$ for all $k\in \mathbb{N}$.
Then, the functor $Y$ is faithful if and only if the sequence $d_{0}, d_{1}, \dots$ satisfies for all $(l_k)_{k \in \mathbb{N}} \in \bigoplus_{k=0}^{\infty}\mathbb{Z}$ the implication
\begin{align}\label{condition}
\prod_{k=0}^\infty d_k^{l_k} = 1 \qquad \Rightarrow \qquad l_k= 0
\text{ for all $k\in \mathbb{N}$}.
\end{align}
\end{theorem}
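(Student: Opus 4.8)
I would reduce faithfulness of $Y$ to a purely combinatorial question about the monoid of free loops, using the structure theorem for linear representations of $\mathbf{cBr}$ (\Cref{theorem linear representations of cBr}). First I would record the shape of such a $Y$. An object of $\mathbf{cBr}$ is a tensor product $A=([1],\underline{c_1})\otimes\cdots\otimes([1],\underline{c_m})$ of colored points, and strictness gives $Y(A)=V_{c_1}\otimes\cdots\otimes V_{c_m}$ with $V_k:=Y(([1],\underline k))$. Since $([1],\underline k)$ is self-dual through the $k$-colored cup and cap, which satisfy the snake identities, $V_k$ is a dualizable object of $(\mathbf{Vect},\otimes_{\mathrm{Schauenburg}})$ and hence finite dimensional, say $\dim V_k=d_k$. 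A morphism of $\mathbf{cBr}$ is a color-compatible Brauer diagram $M$ decorated by a multiset of colored free loops; if $l_k\in\mathbb{N}$ counts the loops of color $k$, then functoriality together with the evaluation $Y(\cap_k)\circ Y(\cup_k)=\dim V_k$ of a single $k$-colored loop (the snake identities identify this with the categorical trace of $\mathrm{id}_{V_k}$; invariance of the cup under the braiding forces $Y(\cap_k)$ to be a non-degenerate symmetric bilinear form and $Y(\cup_k)$ its inverse metric) gives
\begin{equation*}
Y\bigl(M,(l_k)_k\bigr)=\Bigl(\prod_{k=0}^{\infty}d_k^{\,l_k}\Bigr)\cdot Y(M),
\end{equation*}
where $Y(M)$ is the tensor contraction read off from the diagram: a cap is contracted through $Y(\cap_k)$, a cup inserted through $Y(\cup_k)$, and a crossing becomes the symmetry of $\mathbf{Vect}$.

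For the forward implication I would restrict $Y$ to $\mathrm{End}_{\mathbf{cBr}}([0])$, which is the free commutative monoid on the set $\mathbb{N}$ of loop colors; its elements are the tuples $(n_k)_k\in\bigoplus_{k=0}^{\infty}\mathbb{N}$, and by the displayed formula $Y$ sends $(n_k)_k$ to $\prod_{k}d_k^{n_k}\in\mathbb{R}=\mathrm{End}(Y([0]))$. Faithfulness makes this monoid homomorphism injective. Given $(l_k)_k\in\bigoplus_{k=0}^{\infty}\mathbb{Z}$ with $\prod_k d_k^{l_k}=1$, write $l_k=n_k-m_k$ with $n_k,m_k\in\mathbb{N}$; then $\prod_k d_k^{n_k}=\prod_k d_k^{m_k}$ (legitimate as all $d_k>0$), so $(n_k)_k=(m_k)_k$ by injectivity, i.e., all $l_k=0$. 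This is \eqref{condition}.

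For the converse, assume \eqref{condition}. It forces $d_k\geq 2$ for every $k$, since taking $l_k=1$ and $l_j=0$ for $j\neq k$ rules out $d_k=1$, while $d_k=0$ is excluded by hypothesis. Fix for each $k$ a basis of $V_k$ in which the symmetric form $Y(\cap_k)$ is diagonal. With respect to the induced bases of $Y(A)$ and $Y(B)$, the matrix coefficient of $Y(M)$, for a color-compatible diagram $M\colon A\to B$, is a nonzero scalar times $\prod_{\{x,y\}\in\mathrm{arcs}(M)}\delta_{i_x,i_y}$, where $i$ is the index function on the boundary points picking out the chosen basis vectors; in particular $Y(M)\neq 0$, as one sees by taking $i$ constant along each arc of $M$. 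Now let $f=(M_f,(l^f_k)_k)$ and $g=(M_g,(l^g_k)_k)$ be distinct morphisms in $\mathrm{Hom}_{\mathbf{cBr}}(A,B)$. If $M_f=M_g=:M$, then $(l^f_k)_k\neq(l^g_k)_k$, and $Y(f)$, $Y(g)$ are the scalars $\prod_k d_k^{l^f_k}$, $\prod_k d_k^{l^g_k}$ — distinct by \eqref{condition} applied to $l_k=l^f_k-l^g_k$ — times the nonzero map $Y(M)$, so $Y(f)\neq Y(g)$. If $M_f\neq M_g$, pick an arc $\{x,y\}\in\mathrm{arcs}(M_f)\setminus\mathrm{arcs}(M_g)$; then $x$ and $y$ lie in distinct arcs of $M_f$, and since they share a color $c$ with $d_c\geq 2$ we may take $i$ constant along the arcs of $M_f$ with value $1$ on the arc of $x$ and value $2$ on the arc of $y$. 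Then the matrix coefficient of $Y(M_f)$ above is nonzero while that of $Y(M_g)$ vanishes, so $Y(M_f)$ is not a scalar multiple of $Y(M_g)$; as $Y(f)$ and $Y(g)$ are nonzero scalar multiples of $Y(M_f)$ and $Y(M_g)$ respectively, again $Y(f)\neq Y(g)$. Hence $Y$ is faithful.

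The main obstacle is the structural input of the first paragraph — identifying $Y$, up to the choice of bases, with a standard orthogonal-type contraction representation so that the Kronecker-delta description of matrix coefficients becomes available. Once that is in hand, the potentially delicate point — that distinct color-compatible diagrams have non-proportional images even though (as in the second fundamental theorem of invariant theory) the whole family of such images is in general linearly dependent — is settled by the observation that any linear relation forcing $Y(M_f)$ to be proportional to $Y(M_g)$ would be supported on only two diagrams, and the delta expansion rules out such short relations as soon as every $d_k\geq 2$.
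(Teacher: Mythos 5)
Your proposal is correct in substance, but it proves the hard direction by a genuinely different route than the paper. The paper first reduces faithfulness of $Y$ to ``faithfulness on loops'' via a purely categorical closing-up criterion for loop-free morphisms (\Cref{prop:loops}, \Cref{faithfulOnLoops}), and then proves faithfulness on loops by a basis-free trace computation: $\operatorname{Tr}\bigl(Y(\varphi_0^{\operatorname{op}}\circ\varphi_0)\bigr)=\prod_k d_k^{\frac12(|c^{-1}(k)|+|c'^{-1}(k)|)}$ is nonzero and depends only on source and target, so comparing $\operatorname{Tr}\,Y(\varphi\circ\varphi^{\operatorname{op}})$ with $\operatorname{Tr}\,Y(\psi\circ\psi^{\operatorname{op}})$ gives $\prod_k d_k^{2(\mu_k-\nu_k)}=1$, and \eqref{condition} forces equal loop parts (\Cref{y is a faithful functor}). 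You instead separate images of distinct morphisms directly: you diagonalize each pairing $Y(e_{(k)})$, observe that \eqref{condition} forces $d_k\geq 2$, and use the Kronecker-delta form of the matrix entries of $Y$ on a loop-free colored diagram to show that distinct diagrams have non-proportional images, while equal diagrams with different loop multisets differ by the scalars $\prod_k d_k^{l_k}$, which \eqref{condition} distinguishes. Your route is close in spirit to the alternative proof mentioned in the remark following \Cref{y is a faithful functor}; it yields a self-contained linear-algebra argument that bypasses both \Cref{prop:loops} and the trace, at the cost of choosing diagonalizing bases, whereas the paper's argument is coordinate-free and isolates the category-internal content in \Cref{prop:loops}. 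Your easy direction (restricting to $\operatorname{End}_{\mathbf{cBr}}(I)$) coincides with the paper's.

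Two caveats. First, in your case $M_f\neq M_g$ the labels are swapped: the endpoints $x,y$ of an arc of $M_f$ not belonging to $M_g$ lie on the \emph{same} arc of $M_f$ and on \emph{distinct} arcs of $M_g$, so the index function must be taken constant along the arcs of $M_g$, with distinct values on the two $M_g$-arcs through $x$ and $y$; then the coefficient of $Y(M_g)$ is nonzero while that of $Y(M_f)$ vanishes, which still rules out proportionality in both directions, so the conclusion stands after relabeling. Second, like the paper (\Cref{lemma normal form} and the tangle argument in \Cref{prop:loops}), your proof rests on the diagrammatic description of the $\operatorname{Hom}$-sets of $\mathbf{cBr}$ --- in particular that distinct pairs (colored matching, loop multiset) represent distinct morphisms, and hence that $\operatorname{End}_{\mathbf{cBr}}(I)$ is free on the colored loops, which your forward direction needs; you flag this as the main obstacle, and it should be justified (e.g.\ from the Kelly--Laplaza description behind \Cref{free strict compact closed category}) rather than only assumed.
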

In particular, faithful linear representations of \textbf{cBr} exist because one can take $d$ to be the sequence of prime numbers, and then apply \Cref{theorem linear representations of cBr} to construct a strict monoidal functor $Y \colon \mathbf{cBr} \rightarrow \mathbf{Vect}$ which realizes $d_k$ for $k \in \mathbb{N}$ as the dimension of the real vector space $Y(([1],\underline{k}))$.
We note, however, that the condition that implication (\ref{condition}) holds for all $(l_k)_{k \in \mathbb{N}} \in \bigoplus_{k=0}^{\infty}\mathbb{Z}$ is not equivalent to saying that every finite subset of the sequence $d_{0}, d_{1}, \dots$ is relatively prime (e.g., take $d_{0}, d_{1}, \dots$ to be the double $6, 10, 14, \dots$ of the sequence of odd prime numbers).

The paper is structured as follows.
In \Cref{Preliminaries on strict monoidal categories} we recall fundamental facts about monoidal categories in general, and the Schauenburg tensor product in particular.
The chromatic Brauer category is introduced in \Cref{The chromatic Brauer category and its linear representations}, where its linear representations are classified by \Cref{theorem linear representations of cBr}.
The proof of our main result \Cref{main theorem} will be given in \Cref{proof of main theorem}.
Finally, in \Cref{Linearization of the fold map positive TFTs} we discuss our application to Banagl's positive TFT based on fold maps.

\subsection*{Notation}
Throughout the paper, the natural numbers will be meant to be the set $\mathbb{N}=\{0,1,2,\dots\}$ (including zero).

\section{Preliminaries on strict monoidal categories}\label{Preliminaries on strict monoidal categories}

In this section, we introduce the definitions and notational conventions on strict monoidal categories (\Cref{subsection monoidal category}) and the Schauenburg tensor product (\Cref{The Schauenburg tensor product}) that will be used throughout the paper.

\subsection{Strict monoidal categories}\label{subsection monoidal category}
We refer to Kassel \cite{kas} for the basic definitions that are recalled in this section.

A \textit{monoidal category} $(\mathbf C,\otimes,I,\alpha,\lambda,\rho)$ is a
category $\mathbf C$ equipped with a bifunctor
$\otimes\colon \mathbf C\times \mathbf C\rightarrow \mathbf C$,
an object $I\in\operatorname{Ob}(\mathbf C)$, called \textit{unit} with
respect to the tensor product $\otimes$, and three isomorphisms
\begin{align*}
	\alpha_{X,Y,Z}\colon(X\otimes Y)\otimes Z\rightarrow
	X\otimes (Y\otimes Z),\,
	\lambda_X\colon I\otimes X\rightarrow X\text{ and }
	\rho_Y\colon Y\otimes I\rightarrow Y,
\end{align*}
which are functorial in $X,Y,Z\in \operatorname{Ob}(\mathbf C)$.
Furthermore, $\alpha$, $\lambda$ and $\rho$ satisfy the 
\textit{coherence conditions} given by the \textit{pentagon axiom}
and \textit{triangle axiom}. These are
	\small
	\begin{align*}
	\begin{tikzpicture}[->,>=stealth,shorten >=1pt,auto,semithick, scale=.95]
		\node (a) at (0,0) {$((W\otimes X)\otimes Y)\otimes Z$};
		\node (b) at (4.5,0) {$(W\otimes (X\otimes Y))\otimes Z$};
		\node (c) at (9,0) {$W\otimes ((X\otimes Y)\otimes Z)$};
		\node (d) at (9,-2) {$W\otimes (X\otimes (Y\otimes Z))$};
		\node (e) at (0,-2) {$(W\otimes X)\otimes (Y\otimes Z)$};
		\path (a) edge node {$\alpha_{W,X,Y}\otimes 1_Z$}(b)
				edge node [swap]{$\alpha_{W\otimes X,Y,Z}$} (e)
			(b) edge node {$\alpha_{W,X\otimes Y,Z}$} (c)
			(c) edge node {$1_W\otimes \alpha_{X,Y,Z}$}(d)
			(e) edge node [swap]{$\alpha_{W,X,Y\otimes Z}$} (d);
	\end{tikzpicture}
	\end{align*}
	\normalsize
and
	\begin{align*}
		\begin{tikzpicture}[->,>=stealth,shorten >=1pt,auto,semithick]
		\node (a) at (0,0) {$(Y\otimes I)\otimes X$};
		\node (b) at (4,0) {$Y\otimes X$};
		\node (c) at (2,-2) {$Y\otimes (I\otimes X)$};
		\path (a) edge node {$\rho_Y\otimes 1_X$}(b)
				edge node [swap]{$\alpha_{Y,I,X}$} (c)
			(c) edge node [swap]{$1_Y\otimes \lambda_X$} (b);
	\end{tikzpicture},
	\end{align*}
for all $W,X,Y,Z\in\operatorname{Ob}(\mathbf C)$.
Here, $\alpha$ is called \textit{associativity constraint}, and $\lambda$
and $\rho$ are called \textit{left} and \textit{right unit constraints},
respectively.
A monoidal category $\mathbf{C}$ is called \textit{strict} if the associativity and unit constraints $\alpha,\lambda,\rho$ are given by identity morphisms of the category.

If $(\mathbf C,\otimes,I,\alpha,\lambda,\rho)$ is a (strict) monoidal category, then we will call the data $(\otimes, I,\alpha,\lambda,\rho)$ a \textit{(strict) monoidal structure} on $\mathbf C$.
In most of the paper, we work with strict monoidal categories.
In that case, we will omit the associativity and unit constraints in the notation
of a monoidal category, i.e. we will write $(\mathbf C,\otimes, I)$ instead of
$(\mathbf C,\otimes,I,\alpha,\lambda,\rho)$.

Let $\mathbf C$ and $\mathbf D$ be monoidal categories. A \textit{monoidal functor} is a functor $F:\mathbf C\rightarrow\mathbf D$ which respects the monoidal structure. To be more precise, it is a functor
\begin{align*}
	(F,\xi,\xi_0)\colon (\mathbf C,\otimes_{\mathbf C},
	I_{\mathbf C},\alpha_{\mathbf C},\lambda_{\mathbf C},\rho_{\mathbf C})
	\rightarrow (\mathbf D,\otimes_{\mathbf D},I_{\mathbf D},
	\alpha_{\mathbf D},\lambda_{\mathbf D},\rho_{\mathbf D}),
\end{align*}
with isomorphisms $\xi_{X,Y}\colon F(X)\otimes_{\mathbf D} F(Y)\rightarrow
F(X\otimes_{\mathbf C} Y)$ for all $X,Y\in\operatorname{Ob}(\mathbf C)$,
functorial in $X,Y$, i. e. for morphisms $f\colon X\rightarrow X'$ and $g\colon Y\rightarrow Y'$ the diagram
\begin{align*}
	\begin{tikzpicture}[->,>=stealth,shorten >=1pt,auto,semithick,scale=.77]
		\node (a) at (0,0) {$F(X)\otimes_{\mathbf D}F(Y)$};
		\node (b) at (5,0) {$F(X\otimes_{\mathbf C} Y)$};
		\node (c) at (0,-2.5) {$F(X')\otimes_{\mathbf D} F(Y')$};
		\node (d) at (5,-2.5) {$F(X'\otimes_{\mathbf C}Y')$};
		\path (a) edge node {$\xi_{F(X),F(Y)}$} (b)
			(b) edge node {$F(f\otimes_{\mathbf{C}}f')$} (d)
			(a) edge node[swap] {$F(f)\otimes_{\mathbf{D}} F(f')$} (c)
			(c) edge node[swap] {$\xi_{F(X'),F(Y')}$} (d);
	\end{tikzpicture}
\end{align*}
 and an isomorphism $\xi_0\colon
I_{\mathbf D}\rightarrow F(I_{\mathbf C})$ such that the diagrams
\small
\begin{align*}
	\begin{tikzpicture}[->,>=stealth,shorten >=1pt,auto,semithick,scale=.77]
		\node (a) at (0,0) {$F(X)\otimes_{\mathbf D}
			(F(Y)\otimes_{\mathbf D} F(Z))$};
		\node (b) at (5.5,0) {$F(X)\otimes_{\mathbf D}
			F(Y\otimes_{\mathbf C} Z)$};
		\node (c) at (10.5,0) {$F(X\otimes_{\mathbf C}
			(Y\otimes _{\mathbf C} Z))$};
		\node (d) at (0,2) {$(F(X)\otimes_{\mathbf D}F(Y))
			\otimes_{\mathbf D} F(Z)$};
		\node (e) at (5.5,2) {$F(X\otimes_{\mathbf C} Y)
			\otimes_{\mathbf D} F(Z)$};
		\node (f) at (10.5,2){$F((X\otimes_{\mathbf C} Y)
			\otimes_{\mathbf C} Z)$};
		\path (d) edge node[swap] 
			{${\alpha_{\mathbf D}}_{F(X),F(Y),F(Z)}$} (a)
			edge node {$\xi_{X,Y}\otimes_{\mathbf D} 1_{F(Z)}$} (e)
			(a) edge node[swap] {$1_{F(X)}
			\otimes_{\mathbf D} \xi_{Y,Z}$} (b)
			(b) edge node [swap]{$\xi_{X,Y\otimes_{\mathbf C} Z}$} (c)
			(e) edge node {$\xi_{X\otimes_{\mathbf C} Y,Z}$} (f)
			(f) edge node {$F({\alpha_{\mathbf C}}_{X,Y,Z})$} (c);
		\node (a) at (-1,-4) {$F(X)$};
		\node (b) at (3,-4) {$F(I_{\mathbf C}\otimes_{\mathbf C} X)$};
		\node (c) at (-1,-2) {$I_{\mathbf D}\otimes_{\mathbf D} F(X)$};
		\node (d) at (3,-2){$F(I_{\mathbf C})\otimes_{\mathbf D} F(X)$};
		\path (c) edge node {$\xi_0\otimes_{\mathbf D} 1_{F(X)}$} (d)
			(d) edge node {$\xi_{I_{\mathbf C},X}$} (b)
			(b) edge node {$F({\lambda_{\mathbf C}}_X)$} (a)
			(c) edge node [swap]{${\lambda_{\mathbf D}}_{F(X)}$} (a);
		\node (aa) at (7.5,-4) {$F(X)$};
		\node (bb) at (11.5,-4) {$F(X\otimes_{\mathbf C} I_{\mathbf C})$};
		\node (cc) at (7.5,-2) {$F(X)\otimes_{\mathbf D} I_{\mathbf D}$};
		\node (dd) at (11.5,-2){$F(X)\otimes_{\mathbf D} F(I_{\mathbf 
			C})$};
		\path (cc) edge node {$1_{F(X)}\otimes_{\mathbf D} \xi_0$} (dd)
			(dd) edge node {$\xi_{X,I_{\mathbf C}}$} (bb)
			(bb) edge node {$F({\rho_{\mathbf C}}_X)$} (aa)
			(cc) edge node [swap]{${\rho_{\mathbf D}}_{F(X)}$} (aa);
	\end{tikzpicture}
\end{align*}
\normalsize
commute for all $X,Y,Z\in\operatorname{Ob}(\mathbf{C})$.
The monoidal functor $(F,\xi,\xi_0)$ is called \textit{strict} if the
isomorphisms $\xi_0$ and $\xi$ are identity morphisms of $\mathbf D$.
\\

Let $(\mathbf{C},\otimes,I,\alpha,\lambda,\rho)$ be a monoidal category.
A \textit{symmetric braiding} $b$ on $\mathbf C$ is a monoidal functorial
isomorphism $b\colon \otimes\rightarrow \otimes \circ \tau$,
where the map $\tau\colon\operatorname{Ob}(\mathbf C)\times
\operatorname{Ob}(\mathbf C)\rightarrow
\operatorname{Ob}(\mathbf C)\times \operatorname{Ob}(\mathbf C)$
is given by $\tau(X,Y)=(Y,X)$ for any pair $(X,Y)$ of objects of the 
category $\mathbf C$, satisfying the \textit{hexagon axiom}, the \textit{unity
coherence}, and the \textit{inverse law} which are given by the commutativity
of the diagrams
\begin{align*}
	\begin{tikzpicture}[->,>=stealth,shorten >=1pt,auto,semithick]
		\node (a) at (3,0) {$(X\otimes Y)\otimes Z$};
		\node (b) at (7,0) {$X\otimes(Y\otimes Z)$};
		\node (c) at (7,-2) {$(Y\otimes Z)\otimes X$};
		\node (d) at (7,-4){$Y\otimes( Z\otimes X)$};
		\node (e) at (3,-2){$(Y\otimes X)\otimes Z$};
		\node (f) at (3,-4){$Y\otimes(X\otimes Z)$};
		\path (a) edge node {$\alpha_{X,Y,Z}$} (b)
			(b) edge node {$b_{X,Y\otimes Z}$} (c)
			(c) edge node {$\alpha_{Y,Z,X}$} (d)
			(a) edge node [swap] {$b_{X,Y}\otimes 1_Z$} (e)
			(e) edge node [swap] {$\alpha_{Y,X,Z}$} (f)
			(f) edge node [swap] {$1_Y\otimes b_{X,Z}$} (d);
	\node (a) at (0,-5.25) {$X\otimes I$};
	\node (b) at (4,-5.25) {$I\otimes X$};
	\node (c) at (2,-7.25) {$X$};
	\path (a) edge node {$b_{X,I}$} (b)
		(b) edge node {$\lambda_X$} (c)
		(a) edge node [swap]{$\rho_X$} (c);
	\node (a) at (6,-5.25) {$X\otimes Y$};
	\node (b) at (10,-5.25) {$X\otimes Y$};
	\node (c) at (8,-7.25) {$Y\otimes X$};
	\path (a) edge node {$1_{X\otimes Y}$} (b)
		(a) edge node [swap]{$b_{X,Y}$} (c)
		(c) edge node [swap]{$b_{Y,X}$} (b);
\end{tikzpicture},
\end{align*}
respectively,  for all $X,Y,Z\in \operatorname{Ob}(\mathbf C)$.
A monoidal category $\mathbf C$ together with a symmetric braiding $b$ is called \textit{symmetric monoidal category}.

Let $(\mathbf C,\otimes,I_{\mathbf C},b_{\mathbf C})$ and $(\mathbf D,
\odot,I_{\mathbf D},b_{\mathbf D})$ be symmetric strict monoidal
categories. A monoidal functor $(F,\xi,\xi_0)\colon
\mathbf C\rightarrow\mathbf D$ is called \textit{symmetric} if
$F$ is compatible with the symmetric structures, i.e. if
$F({b_{\mathbf C}}_{X,Y})=\xi_{F(Y),F(X)}\circ {b_{\mathbf{D}}}_{F(X),F(Y)}
\circ \xi_{X,Y}^{-1}$ for all $X,Y\in \operatorname{Ob}(\mathbf C)$.
\\

\subsection{The Schauenburg tensor product}\label{The Schauenburg tensor product}
The  usual tensor product $\otimes$ in the category $\mathbf{Vect}$ of real vector spaces and linear maps determines a monoidal structure on $\mathbf{Vect}$ (with unit object $I=\mathbb{R}$) which is certainly not strict.
When studying linear representations of strict monoidal categories, it is appropriate to endow $\mathbf{Vect}$ with a strict monoidal structure.
As described in Theorem XI.5.3 in \cite[p. 291]{kas}, there is a general method for turning any given monoidal category $\mathbf C$ into a monoidally equivalent strict monoidal category $\mathbf C^{\operatorname{str}}$.
However, this procedure changes the category even on the object level, which is probably not convenient for studying properties of linear representations.
Banagl employs an explicit strict monoidal structure on $\mathbf{Vect}$, namely the \textit{Schauenburg tensor product} $\odot$ introduced in \cite{sch}.
The resulting strict monoidal category $(\mathbf{Vect}, \odot, I)$ is monoidally equivalent to the usual monoidal structure on $\mathbf{Vect}$ obtained by $\otimes$.
The advantage of Schauenburg's construction is that the monoidal equivalence $(F,\xi,\xi_0)$ can be chosen in such a way that $F$ is the identity on $\mathbf{Vect}$, and $\xi_0$ is the identity on $I$ (see Theorem 4.3 in \cite{sch}).
Via the natural isomorphism $\xi\colon\otimes\rightarrow\odot$ we can define elements $v\odot w\in V\odot W$, $v\odot w:=\xi_{V,W}(v\otimes w)$, for $v\in V$ and $w\in W$. Given elements $u\in U$, $v\in V$ and $w\in W$, the identity $(u\odot v)\odot w=u\odot (v\odot w)$ holds because of $(U\odot V)\odot W= U\odot (V\odot W)$.

There is a standard symmetric braiding $\beta_{V,W}\colon V\otimes W \rightarrow W\otimes V$ on $\mathbf{Vect}$ for all $V,W\in\operatorname{Ob}(\mathbf{Vect})$ (with respect to the standard tensor product $\otimes$).
By defining $b_{V,W}=\xi_{W,V}\circ \beta_{V,W}\circ \xi_{V,W}^{-1}$ we obtain a symmetric braiding with respect to $\odot$.
All in all, the following proposition holds.

\begin{proposition}
	The data $(\mathbf{Vect},\odot,\mathbb{R},b)$ define a symmetric
	strict monoidal category.
\end{proposition}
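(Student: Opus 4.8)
The statement is an instance of the general principle that a symmetric braiding transports uniquely along a monoidal equivalence so that the equivalence becomes symmetric monoidal (cf. \cite{kas}). Since strictness of $(\mathbf{Vect},\odot,\mathbb R)$ is already guaranteed by Schauenburg \cite{sch}, the plan is only to verify that the family $b$ obtained from the standard symmetric braiding $\beta$ satisfies the braiding axioms with respect to $\odot$. I would organize the verification around the monoidal equivalence $(F,\xi,\xi_0)$ with $F=\mathrm{id}_{\mathbf{Vect}}$ and $\xi_0=1_{\mathbb R}$, reducing each axiom to the corresponding one for $\beta$ relative to $\otimes$.

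First, since $\xi$ is a natural isomorphism and $\beta$ is natural, the composite $b_{V,W}=\xi_{W,V}\circ\beta_{V,W}\circ\xi_{V,W}^{-1}$ is an isomorphism that is natural in $V$ and $W$, and compatibility of $b$ with $\xi$ (so that $b$ is a \emph{monoidal} functorial isomorphism $\odot\to\odot\circ\tau$) is immediate from this defining formula. The inverse law reduces to a one-line cancellation of the meeting $\xi^{-1}\xi$ pair:
\[
b_{W,V}\circ b_{V,W}=\xi_{V,W}\circ(\beta_{W,V}\circ\beta_{V,W})\circ\xi_{V,W}^{-1}=\xi_{V,W}\circ 1_{V\otimes W}\circ\xi_{V,W}^{-1}=1_{V\odot W},
\]
using the inverse law for $\beta$. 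For the unity coherence one substitutes $b_{V,\mathbb R}=\xi_{\mathbb R,V}\circ\beta_{V,\mathbb R}\circ\xi_{V,\mathbb R}^{-1}$, uses $\xi_0=1_{\mathbb R}$ together with the two unit-coherence (triangle) diagrams of the monoidal functor $(F,\xi,\xi_0)$ to identify $\xi_{\mathbb R,V}$ and $\xi_{V,\mathbb R}$ with the left and right unit constraints of $\otimes$, and then invokes the unity coherence of $\beta$; since the unit constraints of $\odot$ are identities, this yields $b_{V,\mathbb R}=1_V$ under $V\odot\mathbb R=V=\mathbb R\odot V$, which is precisely the unity coherence for $b$.

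The core of the argument is the hexagon axiom. Because $\odot$ is strict, its associativity constraints are identities, so the hexagon diagram for $b$ collapses to the single identity
\[
b_{U,\,V\odot W}=(1_V\odot b_{U,W})\circ(b_{U,V}\odot 1_W)
\]
of morphisms $U\odot V\odot W\to V\odot W\odot U$. Substituting the definition of $b$ throughout and cancelling every $\xi/\xi^{-1}$ pair that meets, the assertion becomes an equality of composites built only from $\beta$'s and from the comparison maps $\xi$; it follows by pasting the hexagon axiom for $\beta$ applied to the triple $(U,V,W)$ with the hexagon coherence diagram of the monoidal functor $(F,\xi,\xi_0)$, which records how $\xi$ intertwines the two bracketings of $U\otimes V\otimes W$ with those of the permuted triple. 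I expect this bookkeeping of two interlocking hexagons to be the only delicate point; after the $\xi$-cancellations everything else reduces to the corresponding property of the standard braiding $\beta$, already available in $(\mathbf{Vect},\otimes,\mathbb R,\beta)$.
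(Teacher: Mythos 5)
Your proposal is correct and follows the same route the paper takes implicitly: the paper defines $b_{V,W}=\xi_{W,V}\circ\beta_{V,W}\circ\xi_{V,W}^{-1}$ and asserts the proposition, relying on Schauenburg's strictness result and transport of the standard braiding along the monoidal equivalence $(F,\xi,\xi_0)$ with $F=\mathrm{id}$ and $\xi_0=1_{\mathbb R}$. You simply spell out the verification (naturality, inverse law, unity coherence, hexagon) that the paper leaves to the reader, which is consistent with its argument.
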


For the rest of this paper, we will use the Schauenburg tensor product on $\mathbf{Vect}$; thus, we will from now on write $\otimes$ instead of $\odot$.

\section{The chromatic Brauer category and its linear representations}\label{The chromatic Brauer category and its linear representations}

In this section, we first provide some background on compact closed categories (see \Cref{Compact closed categories}), and then introduce the chromatic Brauer category as the certain quotient of a free strict compact closed category (see \Cref{The chromatic Brauer category}).

\subsection{Compact closed categories}\label{Compact closed categories}
Let $(\mathbf{C},\otimes,I,\alpha,\lambda,\rho, b)$ be a symmetric monoidal category.
An object $X\in \operatorname{Ob}(\mathbf{C})$ is called \textit{dualizable} if there exists a triple $(X^\star, i_X,e_X)$ consisting of an object $X^\star\in \operatorname{Ob}(\mathbf{C})$, called a \textit{dual} of $X$, and morphisms
$i_X\colon I\rightarrow X\otimes X^\star$ and $e_X\colon X^\star\otimes X\rightarrow I$, called \textit{unit} and \textit{counit} respectively, such that the ``triangular equations''
\begin{align*}
\begin{tikzpicture}[->,>=stealth,shorten >=1pt,auto,semithick]
	\node (a) at (0,0) {$X^\star\otimes I$};
	\node (b) at (0,-3){$X^\star$};
	\node (c) at (3.5,0){$X^\star\otimes (X\otimes X^\star)$};
	\node (d) at (3.5,-1.5){$(X^\star\otimes X)\otimes X^\star$};
	\node (e) at (3.5,-3){$I\otimes X^\star$};
	\path (a) edge node [swap]{$\rho_{X^\star}$} (b)
			edge node {$1_{X^\star}\otimes i_{X^\star}$} (c)
		(c) edge node {$\alpha_{X^\star,X,X^\star}^{-1}$} (d)
		(d) edge node {$e_{X^\star}\otimes 1_{X^\star}$} (e)
		(e) edge node{$\lambda_{X^\star}$} (b);
	\node (a) at (6.5,0) {$I\otimes X$};
	\node (b) at (6.5,-3){$X$};
	\node (c) at (10,0){$(X\otimes X^\star)\otimes X$};
	\node (d) at (10,-1.5){$X\otimes(X^\star\otimes X)$};
	\node (e) at (10,-3){$X\otimes I$};
	\path (a) edge node [swap]{$\lambda_{X}$} (b)
			edge node {$i_X\otimes 1_{X}$} (c)
		(c) edge node {$\alpha_{X,X^\star,X}$} (d)
		(d) edge node {$1_{X}\otimes e_{X}$} (e)
		(e) edge node  {$\rho_{X}$} (b);
\end{tikzpicture}
\end{align*}
hold.
A \emph{compact closed category} is a symmetric monoidal category in which every object is dualizable.
We use the notation $(\mathbf{C},\otimes,I,\alpha,\lambda,\rho, b, ()^{\star}, i, e)$ to include as data the assignment $()^{\star} \colon X \mapsto X^{\star}$, as well as the families $i$ and $e$ of unit morphisms and counit morphisms, respectively.
As explained in Section 6 of \cite{kel80}, the axioms of a compact closed category determine three canonical isomorphism $u_{X, Y} \colon (X \otimes Y)^{\star} \rightarrow Y^{\star} \otimes X^{\star}$, $v \colon I^{\star} \rightarrow I$, and $w_{X} \colon X^{\star \star} \rightarrow X$ for all objects $X, Y$ in $\mathbf{C}$ which are uniquely determined by certain commutative diagrams.
Following Section 9 of \cite{kel80}, We call a compact closed category \emph{monoidally strict} if the underlying monoidal category is strict.
Furthermore, we call a compact closed category \emph{strict} if it is monoidally strict, and, in addition, the families of isomorphisms $u$, $v$, $w$ described above are all identity morphisms.

In \cite{kel80}, Kelly and Laplaza give an explicit description of the free compact closed category $F \mathbf{A}$ on a given category $\mathbf{A}$, thus solving the coherence problem for this structure.
In Section 9 of \cite{kel80}, they discuss some related structures including the free monoidally strict compact closed category $F' \mathbf{A}$, as well as the free strict compact closed category $F'' \mathbf{A}$ on $\mathbf{A}$.
The latter category is relevant for the present paper, and has the following description in terms of generators and relations.

\begin{theorem}[\cite{kel80}]\label{free strict compact closed category}
For any category $\mathbf{A}$, the free strict compact closed category $F'' \mathbf{A}$ has as objects the tensor products $X_{1} \otimes \dots \otimes X_{n}$, $n \geq 0$, where each $X_{i}$ is $A$ or $A^{\star}$ for some object $A$ of $\mathbf{A}$.
Moreover, $F'' \mathbf{A}$ is a strict compact closed category which is generated (as strict monoidal category) by the families of morphisms of its symmetric braiding $b$, its unit $i$, its counit $e$, and the morphisms of $\mathbf{A}$ (considered as morphisms in $F'' \mathbf{A}$), and by the following relations:
\begin{description}
\item[(R1) Naturality of $b$]
For all morphisms $f \colon X \rightarrow Y$ and all objects $Z$ in $F'' \mathbf{A}$,
\begin{align*}
b_{Y, Z} \circ (f \otimes 1_{Z}) = (1_{Z} \otimes f) \circ b_{X, Z}, \\
b_{Z, Y} \circ (1_{Z} \otimes f) = (f \otimes 1_{Z}) \circ b_{Z, X}.
\end{align*}
\item[(R2) Coherence relations for $b$]
For all objects $X, Y, Z$ in $F'' \mathbf{A}$,
\begin{align*}
b_{X, Y \otimes Z} = (1_{Y} \otimes b_{X, Z}) \circ (b_{X, Y} \otimes 1_{Z}) \qquad \textit{(hexagon axiom)}, \\
b_{X, I} = 1_{X} \qquad \textit{(unity coherence)}, \\
b_{Y, X} \circ b_{X, Y} = 1_{X \otimes Y} \qquad \textit{(inverse law)}.
\end{align*}
\item[(R3) Triangular equations for $i$ and $e$]
For all objects $X$ in $F'' \mathbf{A}$,
\begin{align*}
(1_{X^\star}\otimes i_{X^\star}) \circ (e_{X^\star}\otimes 1_{X^\star}) = 1_{X^\star}, \\
(1_{X}\otimes e_{X}) \circ (i_{X}\otimes 1_{X}) = 1_{X}.
\end{align*}
\item[(R4) Functoriality of the canonical assignment $\{\} \colon \mathbf{A} \rightarrow F'' \mathbf{A}$]
For all morphisms $g \colon A \rightarrow B$, $h \colon B \rightarrow C$, and all objects $D$ in $\mathbf{A}$,
\begin{align*}
\{h \circ g\} = \{h\} \circ \{g\}, \\
1_{D} = \{1_{D}\}.
\end{align*}
\item[(R5) Strictness]
For all objects $X, Y$ in $F'' \mathbf{A}$,
\begin{align*}
(1_{X} \otimes i_{Y} \otimes 1_{X^\star}) \circ i_{X} = i_{X \otimes Y} \qquad \textit{(see diagram (6.2) in \cite{kel80})}, \\
i_{I} = 1_{I} \qquad \textit{(see diagram (6.3) in \cite{kel80})}, \\
b_{X, X^\star} \circ i_{X} = i_{X^\star} \qquad \textit{(see diagram (6.4) in \cite{kel80})}.
\end{align*}
\end{description}
\end{theorem}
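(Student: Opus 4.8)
The statement is the specialisation to the strict setting of the coherence theorem for compact closed categories of Kelly and Laplaza, so the plan is to derive it from their explicit descriptions of the free compact closed category $F\mathbf{A}$ (Section~6 of \cite{kel80}) and the free monoidally strict compact closed category $F'\mathbf{A}$ (Section~9 of \cite{kel80}). First I would recall that $F'\mathbf{A}$ has as objects the finite words $X_{1}\otimes\dots\otimes X_{n}$ in the symbols $A$, $A^{\star}$ (with $A$ an object of $\mathbf{A}$), with $\otimes$ given by concatenation and $I$ the empty word. I would then realise $F''\mathbf{A}$ as the quotient of $F'\mathbf{A}$ by the congruence forcing the three canonical isomorphisms $u_{X,Y}\colon(X\otimes Y)^{\star}\to Y^{\star}\otimes X^{\star}$, $v\colon I^{\star}\to I$ and $w_{X}\colon X^{\star\star}\to X$ to be identities; on objects this simply identifies $(A^{\star})^{\star}$ with $A$, $I^{\star}$ with $I$ and $(X\otimes Y)^{\star}$ with $Y^{\star}\otimes X^{\star}$, giving the object description in the statement. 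The work here is to check that this congruence is compatible with $\otimes$, with $b$, $i$, $e$, and with the dual-of-a-morphism operation, so that the quotient inherits a strict compact closed structure; this is where the naturality and mutual coherence of $u$, $v$, $w$ in $F'\mathbf{A}$ are used. The universal property of $F''\mathbf{A}$, namely that any functor $\mathbf{A}\to\mathbf{D}$ into a strict compact closed category factors uniquely through the canonical $\{\}\colon\mathbf{A}\to F''\mathbf{A}$ by a strict compact closed functor, then follows from that of $F'\mathbf{A}$ together with the fact that in a strict compact closed category $\mathbf{D}$ the isomorphisms $u$, $v$, $w$ are already identities.

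Next I would check that the listed families generate $F''\mathbf{A}$ as a strict monoidal category. In $F\mathbf{A}$ every morphism is a composite of tensor products of identities, structural isomorphisms, braidings, units $i$, counits $e$, the morphisms $\{g\}$ of $\mathbf{A}$, and duals $f^{\star}$ of arbitrary morphisms. In $F''\mathbf{A}$ the structural isomorphisms and $u$, $v$, $w$ are identities, and for $f\colon X\to Y$ one has $f^{\star}=(e_{Y}\otimes 1_{X^{\star}})\circ(1_{Y^{\star}}\otimes f\otimes 1_{X^{\star}})\circ(1_{Y^{\star}}\otimes i_{X})$, so $f^{\star}$ is itself a composite of tensor products of $i$, $e$ and morphisms already known to be composites of generators. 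Hence $b$, $i$, $e$ and the morphisms of $\mathbf{A}$ generate; note that relation (R5) moreover expresses each $i_{X}$, and dually each $e_{X}$, through the $i_{A}$ and $e_{A}$ with $A$ an object of $\mathbf{A}$.

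The substantial part is completeness of (R1)--(R5): every equation between parallel composites of generators that holds in all strict compact closed categories should be derivable from (R1)--(R5). I would prove this by a normal-form argument. Using (R1) and (R2) one moves braidings to the outside and normalises them, so that the ``crossing part'' records a single permutation; using (R4) one fuses consecutive $\mathbf{A}$-morphisms lying on a common strand; and using (R3) and (R5) one cancels zig-zags and reduces nested or composite units and counits. The outcome is that every morphism is equivalent, modulo (R1)--(R5), to a canonical labelled tangle diagram: a planar picture of disjoint oriented strands and loops, each strand running between a domain letter and a codomain letter (or closing into a loop) and carrying a single label from $\mathbf{A}$, together with a fixed permutation recording the crossings. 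One then argues that two such canonical diagrams are identified by (R1)--(R5) only if they coincide, and conversely that any two generator-composites representing the same morphism of $F''\mathbf{A}$ reduce to the same canonical diagram; matching the resulting combinatorial model against Kelly and Laplaza's description of the hom-sets of $F\mathbf{A}$, transported along the strictification $F\mathbf{A}\to F'\mathbf{A}\to F''\mathbf{A}$, completes the proof.

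The step I expect to be the main obstacle is exactly this completeness claim --- equivalently, the confluence of the reduction and the faithfulness of the labelled-tangle model --- since it is the genuine content of the coherence theorem of Kelly and Laplaza. In practice one either invokes that theorem directly, or constructs the labelled-tangle category independently and verifies by hand that it has the universal property of $F''\mathbf{A}$, thereby identifying it with the category presented by (R1)--(R5).
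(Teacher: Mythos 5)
The paper gives no proof of this statement: it is imported verbatim from Kelly--Laplaza \cite{kel80} (Sections 6 and 9), so there is no argument by the authors to compare yours against step by step. Your sketch --- realizing $F''\mathbf{A}$ from $F\mathbf{A}$ and $F'\mathbf{A}$ by forcing the canonical isomorphisms $u$, $v$, $w$ to be identities, checking generation by $b$, $i$, $e$ and the morphisms of $\mathbf{A}$ via the formula for $f^{\star}$, and then appealing to the Kelly--Laplaza coherence theorem (equivalently, their labelled graph/tangle description of the hom-sets) for completeness of (R1)--(R5) --- is precisely the route of that reference, and you are right that the completeness step is the genuine content rather than something settled by a routine rewriting argument. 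So your proposal is consistent with the paper's own treatment, which simply cites \cite{kel80} for the whole statement.
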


\subsection{The chromatic Brauer category} \label{The chromatic Brauer category}

The Brauer category $\mathbf{Br}$ is considered in Section 10 of \cite{ban2} and Section 2.3 of \cite{ban3}, where Banagl defines it as a natural (horizontal) categorification of Brauer algebras.
Roughly speaking, isomorphism classes of finite sets serve as objects, and morphisms are isotopy classes of unoriented tangles in Euclidean $4$-space.
We shall next introduce the \textit{chromatic Brauer category} $\mathbf{cBr}$ as a certain enrichment of the Brauer category.
Namely, we equip the components of objects and morphisms with colorings using a countable number of colors (see \Cref{remark cbr intuition}).
However, we shal first give a more sophisticated definition of $\mathbf{cBr}$ in terms of the free strict compact closed category assigned to the discrete category $\mathbf{N}$ with $\operatorname{Ob}(\mathbf{N}) = \{(k); \; k \in \mathbb{N}\}$ a countable set indexed by the natural numbers.

\begin{definition}[chromatic Brauer category]\label{definition cbr}
The category $\mathbf{cBr}$ is the quotient of the free strict compact closed category $F'' \mathbf{N}$ (see \Cref{free strict compact closed category}) by the relation $(k) = (k)^\star$ for all objects $(k) \in \operatorname{Ob}(\mathbf{N})$.
In other words, the objects of the strict compact closed category $\mathbf{cBr}$ are of the form
\begin{equation}\label{objects of cbr}
(k_{1}) \otimes \dots \otimes (k_{m}), \quad m \geq 0 \text{ and } k_{i} \in \mathbb{N} \text{ for } 1 \leq i \leq m,
\end{equation}
and $\mathbf{cBr}$ is generated (as strict monoidal category) by the families of morphisms of its symmetric braiding $b$, its unit $i$, and its counit $e$, and by the following relations:
\begin{description}
\item[(C1) Naturality of $b$]
For all morphisms $f \colon X \rightarrow Y$ and all objects $Z$ in $\mathbf{cBr}$,
\begin{align*}
b_{Y, Z} \circ (f \otimes 1_{Z}) = (1_{Z} \otimes f) \circ b_{X, Z}, \\
b_{Z, Y} \circ (1_{Z} \otimes f) = (f \otimes 1_{Z}) \circ b_{Z, X}.
\end{align*}
\item[(C2) Coherence relations for $b$]
For all objects $X, Y, Z$ in $\mathbf{cBr}$,
\begin{align*}
b_{X, Y \otimes Z} = (1_{Y} \otimes b_{X, Z}) \circ (b_{X, Y} \otimes 1_{Z}) \qquad \textit{(hexagon axiom)}, \\
b_{X, I} = 1_{X} \qquad \textit{(unity coherence)}, \\
b_{Y, X} \circ b_{X, Y} = 1_{X \otimes Y} \qquad \textit{(inverse law)}.
\end{align*}
\item[(C3) Triangular equations for $i$ and $e$]
For all objects $X$ in $\mathbf{cBr}$,
\begin{align*}
(1_{X^\star}\otimes i_{X^\star}) \circ (e_{X^\star}\otimes 1_{X^\star}) = 1_{X^\star}, \\
(1_{X}\otimes e_{X}) \circ (i_{X}\otimes 1_{X}) = 1_{X}.
\end{align*}
\item[(C4) Strictness]
For all objects $X, Y$ in $\mathbf{cBr}$,
\begin{align*}
 i_{X \otimes Y} = (1_{X} \otimes i_{Y} \otimes 1_{X^\star}) \circ i_{X}, \\
i_{I} = 1_{I}, \\
b_{X, X^\star} \circ i_{X} = i_{X^\star}.
\end{align*}
\end{description}
\end{definition}

For an object in $\operatorname{Ob}(\mathbf{cBr})$ of the form (\ref{objects of cbr}), we introduce the alternative notation $([m], c)$, where $[m]$ denotes the set $\{1, \dots, m\}$ (with $[0] = \emptyset$), and $c \colon [m] \rightarrow \mathbb{N}$ is the map given by $c(i) = k_{i}$.
If $c(i) = k$ for all $i$, then we write $c = \underline{k}$.
In later sections, we will mainly use the notation $([m], c)$ for objects in $\mathbf{cBr}$ because of its consistency with the notation $[m]$ used for objects in the Brauer category $\mathbf{Br}$ in \cite{ban2, ban3}.

The \emph{elementary morphisms} $b_{(k), (l)}, i_{(k)}$, and $e_{(k)}$, $k, l \in \mathbb{N}$, play an important role in the study of $\mathbf{cBr}$, as the following remarks show.

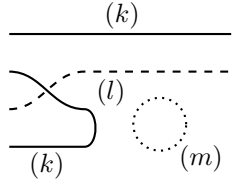
\begin{figure}
\begin{center}
\begin{tikzpicture}[auto,scale=.5,thick]
				\draw[dashed] (0,1) to [out=0,in=180] ++(2,1)
					to [out=0,in=180] ++(4,0);
				\draw[] (0,2) to [out=0,in=180] ++(2,-1)
					to [out=0,in=0] ++(0,-1)
					to [out=180,in=0] ++(-2,0);
				\draw[dotted] (4,0.6) circle (20pt);
				\draw (0,3) to (5.9,3);
				\node (A) at (5.1,-0.3) {$(m)$};
				\node (B) at (3,3.5) {$(k)$};
				\node (C) at (2.7,1.45) {$(l)$};
				\node (D) at (1,-0.5) {$(k)$};
		\end{tikzpicture}
\end{center}
\caption{A morphism $(k) \otimes (l) \otimes (k) \otimes (k) \rightarrow (l) \otimes (k)$ in $\mathbf{cBr}$.}
\label{example morphism cbr}
\end{figure}

\begin{remark}[visualizing morphisms of $\mathbf{cBr}$]\label{remark cbr intuition}
Similar to the discussion of morphisms of the Brauer category considered in \cite{ban2, ban3}, morphisms of $\mathbf{cBr}$ can be thought of as being ambient isotopy classes of $1$-dimensional unoriented tangles in a high dimensional Euclidean space with the novelty that each component is now labeled by an object of $\mathbf{N}$.
By projecting the tangles into a plane, we can visualize morphisms of $\mathbf{cBr}$ by deformation classes of diagrams like in \Cref{example morphism cbr}, where differently structured lines correspond to independent labels.
We can use these diagrams to encode a decomposition of a morphism of $\mathbf{cBr}$ into elementary morphisms as follows.
Horizontal lines in such a diagram represent identity morphisms, crossings represent elementary braidings $b_{(k), (l)}$, and left and right half circles represent elementary units $i_{(k)}$ and counits $e_{(k)}$, respectively.
Moreover, composition of morphisms in $\mathbf{cBr}$ corresponds to horizontal composition of diagrams, and tensor product of morphisms in $\mathbf{cBr}$ is defined by vertical ``stacking'' of diagrams.
\end{remark}

\begin{remark}[generators and relations of $\mathbf{cBr}$]\label{generators and relations of cbr}
It follows from the relations $b_{X, Y \otimes Z} = (1_{Y} \otimes b_{X, Z}) \circ (b_{X, Y} \otimes 1_{Z})$ \textbf{(C2)} and $i_{X \otimes Y} = (1_{X} \otimes i_{Y} \otimes 1_{X^\star}) \circ i_{X}$ \textbf{(C4)} (and the analogous relation for the counit $e$) that we can take the elementary morphisms $b_{(k), (l)}, i_{(k)}$, and $e_{(k)}$ for $k, l \in \mathbb{N}$ as generators of $\mathbf{cBr}$.
Then, it can be shown that the following relations are sufficient to generate $\mathbf{cBr}$ as strict monoidal category (also compare with Theorem 2.6 in \cite{leh}):
	\begin{description}
		\item[(A1) Zig-Zag (straightening)]
			\begin{align*}
			\hspace{-4em}(e_{(k)}\otimes 1_{(k)})\circ(1_{(k)}
			\otimes i_{(k)})&=1_{(k)},
			\\\vspace{1em}
			&\hspace{-6.5em}\begin{tikzpicture}[scale=.5, thick]
				\draw[] (0,0) to [out=0,in=180](2,0);
				\draw (2,0) to [out=0,in=0] (2,1);
				\draw (2,1) to [out=180,in=180] (2,2);
				\draw (2,2) to [out=0,in=180](4,2);
				\node  (a) at (5,1) {$=$};
				\draw (6,1) to [out=0,in=180] (8,1);
			\end{tikzpicture}\quad.
			\end{align*}
		\item[$\mathbf{(A2)}$ Sliding]
			\begin{align*}
				(e_{(k)}\otimes 1_{(l)})\circ(1_{(k)}\otimes b_{(l), (k)})
				&= (1_{(l)}\otimes
				e_{(k)})\circ(b_{(k), (l)}\otimes 1_{(k)}), \\
				&\hspace{-6.5em}\begin{tikzpicture}[auto,scale=.5,thick]
				\draw[dashed] (0,1) to [out=0,in=180] ++(2,1)
					to [out=0,in=180] ++(2,0);
				\draw[] (0,2) to [out=0,in=180] ++(2,-1)
					to [out=0,in=0] ++(0,-1)
					to [out=180,in=0] ++(-2,0);
				\node (a) at (5,1) {$=$};
				\draw[] (6,0) to [out=0,in=180] ++(2,1)
					to [out=0,in=0] ++(0,1)
					to [out=180,in=0] ++(-2,0);
				\draw[dashed] (6,1) to [out=0,in=180] ++(2,-1)
					to [out=0,in=180] ++(2,0);
		\end{tikzpicture}\quad.
	\end{align*}
	\item[$\mathbf{(A3)}$ Reidemeister 1 (de-looping)]
	\begin{align*}
		b_{(k), (k)}\circ i_{(k)}&= i_{(k)},\\
			&\hspace{-4.15em}\begin{tikzpicture}[auto,scale=.5,thick]
			\draw (0,0) to [out=180,in=0] ++(-2,1)
				to [out=180,in=180] ++(0,-1)
				to [out=0,in=180] ++ (2,1);
			\node (a) at (1,0.5) {$=$};
			\draw  (2.5,0) to [out=180,in=0] ++(-0.5,0)
				to [out=180,in=180] ++(0,1)
				to [out=0,in=180] ++(0.5,0);
		\end{tikzpicture}\quad.
	\end{align*}
		\item[(A4) Reidemeister 2 (double crossing)]
		\begin{align*}
		b_{(l), (k)}\circ b_{(k), (l)}&=1_{(k)}\otimes 1_{(l)},\\\vspace{1em}
			&\hspace{-6.45em}\begin{tikzpicture}[auto,scale=.5,thick]
				\draw[](0,0)to[out=0,in=180] ++(2,1)
					to[out=0,in=180] ++(2,-1);
				\draw[dashed] (0,1)to [out=0,in=180] ++(2,-1)
					to[out=0,in=180] ++(2,1);
				\node (a) at (5,0.5) {$=$};
				\draw[] (6,0) to [out=0,in=180] ++(2,0);
				\draw[dashed] (6,1) to [out=0,in=180] ++(2,0);
			\end{tikzpicture}\quad.
		\end{align*}
		\item[(A5) Reidemeister 3 (braiding 
		relation, a.k.a. Yang-Baxter equation)]
		\begin{align*}
		(1_{(l)}\otimes b_{(j), (k)})\circ(b_{(j), (l)}\otimes 
		1_{(k)})\circ(1_{(j)}\otimes b_{(k), (l)})&\\
		&\hspace{-7em}=
		(b_{(k), (l)}\otimes1_{(j)})\circ(1_{(k)}\otimes b_{(j), (l)})\circ(b_{(j), (k)}
		\otimes 1_{(l)}),
		\\\vspace{1em}
		&\hspace{-13.35em}
		\begin{tikzpicture}[auto,scale=.5,thick]
			\draw[] (0,0) to[out=0,in=180] ++(2,0)
				to[out=0,in=180] ++(2,1)
				to[out=0,in=180] ++(2,1);
			\draw[dashed] (0,1) to[out=0,in=180] ++(2,1)
				to[out=0,in=180] ++(2,0)
				to[out=0,in=180] ++(2,-1);
			\draw[dotted] (0,2) to[out=0,in=180] ++(2,-1)
				to[out=0,in=180] ++(2,-1)
				to[out=0,in=180] ++(2,0);
			\node (a) at (7,1) {$=$};
			\draw (8,0) to[out=0,in=180] ++(2,1)
				to[out=0,in=180] ++(2,1)
				to[out=0,in=180] ++(2,0);
			\draw[dashed] (8,1) to[out=0,in=180] ++(2,-1)
				to[out=0,in=180] ++(2,0)
				to[out=0,in=180] ++(2,1);
			\draw[dotted] (8,2) to[out=0,in=180] ++(2,0)
				to[out=0,in=180] ++(2,-1)
				to[out=0,in=180] ++(2,-1);
		\end{tikzpicture}\quad.
		\end{align*}
	\end{description}
\end{remark}

\subsection{Linear representations of the chromatic Brauer category}\label{Linear representations of the chromatic Brauer category}
For the purpose of constructing symmetric strict monoidal functors $Y\colon \mathbf{cBr}\rightarrow \mathbf{Vect}$, where $\mathbf{Vect}$ is equipped with the Schauenburg tensor product (see \Cref{The Schauenburg tensor product}), we use the notion of a \textit{duality structure} on a finite dimensional real vector space $V$.
Namely, a duality structure on $V$ is a pair $(i,e)$ whose components are a symmetric copairing $i\colon\mathbb{R}\rightarrow V\otimes V$ and a symmetric pairing $e\colon V\otimes V\rightarrow \mathbb{R}$, also called \textit{unit} and \textit{counit}, respectively, satisfying the \textit{zig-zag equation} $(e\otimes 1_V)\circ(1_V\otimes i)=1_V$.
In other words, $V$ is dualizable with dual $V$, and is hence self-dual.

Let $d$ be the dimension of $V$, and let $\{v_1,...,v_d\}$ be a basis of $V$.
Then, the set of all duality structures on $V$ is in 1-1 correspondence to the set of symmetric and invertible $(d\times d)$-matrices $\operatorname{Sym}(d,\mathbb{R})\cap\operatorname{GL}(d,\mathbb{R})$.
Indeed, let $e_{jk}=e(v_j\otimes v_k)$, and set $X=\operatorname{Mat}(e)=(e_{jk})_{j,k=1}^d$.
Then, $X$ is symmetric due to the symmetry of $e$, and -- by the zig-zag equation -- $X$ is invertible with inverse $X^{-1}= \operatorname{Mat}(i)=(i_{jk})_{j,k=1}^d$, where the coefficients $i_{jk}$ are given by $i(1)=\sum_{j,k=1}^d i_{jk}v_j\otimes v_k$.
Conversely, let $X\in \operatorname{Sym}(d,\mathbb{R})\cap\operatorname{GL}(d,\mathbb{R})$ be symmetric and invertible.
Then, the matrices $\operatorname{vec}(X^{-1})$ and $\operatorname{vec}(X)^{\operatorname{T}}$ define a symmetric copairing and pairing such that the zig-zag equation is satisfied, where $(-)^{\operatorname{T}}$ denotes the transposition of a matrix, and $\operatorname{vec}(-)$ denotes the vectorization of a matrix formed by stacking the columns of the matrix into a single column vector.
\\

For a linear representation $Y \colon \mathbf{cBr} \rightarrow \mathbf{Vect}$ of \textbf{cBr}, the pair $(Y(i_{(k)}),Y(e_{(k)}))$ forms a duality structure on $V_k$.
Hence, $Y((k))=V_k$ is for all $k\in\mathbb{N}$ a finite dimensional vector space (cf. Proposition 2.7 in \cite{ban3}).
Since $Y$ is symmetric, we also have $Y(b_{(k), (l)}) = b_{V_k,V_l}$, where $b_{V_k,V_l}\colon V_k \otimes V_l \rightarrow V_l \otimes V_k$ is the braiding isomorphism in $\mathbf{Vect}$, which is induced by $v_k\otimes v_l\mapsto v_l\otimes v_k$.
Conversely, the following result shows that we may construct symmetric strict monoidal functors $Y\colon\mathbf{cBr}\rightarrow\mathbf{Vect}$ by choosing duality structures.

\begin{theorem}[Linear representations of $\mathbf{cBr}$]\label{theorem linear representations of cBr}
Let $V_k$ be a finite dimensional real vector space and let the pair $(i^{(k)}, e^{(k)})$ be a duality structure on $V_k$ for all $k\in\mathbb{N}$.
Then there exists a unique symmetric strict monoidal functor $Y\colon (\mathbf{cBr},\otimes,I,b)\rightarrow (\mathbf{Vect}, \otimes, \mathbb{R},b)$, which satisfies $Y((k))=V_k$ and preserves duality, i.e. $Y(i_{(k)})=i^{(k)}$ and $Y(e_{(k)})=e^{(k)}$ for all $k\in\mathbb{N}$.
\end{theorem}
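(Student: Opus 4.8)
The plan is to construct $Y$ directly from the presentation of $\mathbf{cBr}$ by generators and relations recorded in \Cref{generators and relations of cbr}: as a strict monoidal category, $\mathbf{cBr}$ is generated by the elementary morphisms $b_{(k),(l)}$, $i_{(k)}$, $e_{(k)}$ ($k,l\in\mathbb N$), subject to the relations \textbf{(A1)}--\textbf{(A5)} (on top of the bifunctoriality of $\otimes$, which is part of being a strict monoidal category and so holds automatically in $\mathbf{Vect}$). Thus a strict monoidal functor out of $\mathbf{cBr}$ is the same datum as an assignment on objects together with an assignment on these generators satisfying \textbf{(A1)}--\textbf{(A5)} in the target. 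The construction therefore splits into prescribing the data, checking the relations, and verifying symmetry, and uniqueness will be read off once we observe that the hypotheses leave no freedom.

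First I would fix $Y$ on objects by $Y(I)=\mathbb R$ and $Y(([m],c))=V_{c(1)}\otimes\dots\otimes V_{c(m)}$. Since the Schauenburg tensor product is strictly associative and strictly unital on $\mathbf{Vect}$, this assignment is automatically compatible with the strict monoidal structure on the object level, and any strict monoidal $Y$ with $Y((k))=V_k$ is forced to be this. Next I would put $Y(b_{(k),(l)})=b_{V_k,V_l}$, the standard symmetric braiding of $(\mathbf{Vect},\otimes)$, and $Y(i_{(k)})=i^{(k)}$, $Y(e_{(k)})=e^{(k)}$. Because $Y$ being symmetric forces $Y(b_{(k),(l)})=b_{V_k,V_l}$ and the hypotheses prescribe $Y(i_{(k)})$ and $Y(e_{(k)})$, this is the unique admissible choice on generators, from which uniqueness of $Y$ follows (every morphism of $\mathbf{cBr}$ is a composite of tensor products of generators and identities, and $Y$ must preserve $\circ$, $\otimes$ and identities).

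The core of the argument is to check that these images satisfy \textbf{(A1)}--\textbf{(A5)} in $\mathbf{Vect}$, for then the assignment extends (uniquely) to a strict monoidal functor $Y$ on $\mathbf{cBr}$. Here \textbf{(A1)} (zig-zag) is precisely the defining zig-zag equation of the duality structure $(i^{(k)},e^{(k)})$; \textbf{(A3)} (Reidemeister 1) is precisely the symmetry of the copairing, $b_{V_k,V_k}\circ i^{(k)}=i^{(k)}$; and \textbf{(A4)} (Reidemeister 2) and \textbf{(A5)} (Yang-Baxter) hold because $b$ is the standard symmetric braiding of $\mathbf{Vect}$, for which the inverse law and the hexagon are classical. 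The only relation requiring an actual computation is \textbf{(A2)} (sliding), which follows from naturality of $b$ in $\mathbf{Vect}$: both composites send $v\otimes w\otimes v'$ to $e^{(k)}(v\otimes v')\,w$. Finally, $Y$ is symmetric because, $Y$ being strict, the symmetry condition amounts to $Y(b_{A,B})=b_{Y(A),Y(B)}$ for all objects $A,B$ of $\mathbf{cBr}$; every braiding of $\mathbf{cBr}$ is a composite of elementary braidings produced by the hexagon axiom and the naturality relations \textbf{(C1)}--\textbf{(C2)}, which the braiding of $\mathbf{Vect}$ also obeys, so it is enough that $Y$ matches on the $b_{(k),(l)}$, which holds by construction.

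I expect no deep obstacle once the presentation of \Cref{generators and relations of cbr} is granted; the only genuinely computational point is the sliding relation \textbf{(A2)}, every other relation being immediate from the definition of a duality structure or from standard properties of the symmetric braiding of $\mathbf{Vect}$. If one prefers not to invoke \Cref{generators and relations of cbr}, an alternative is to define $Y$ first on the free strict compact closed category $F''\mathbf N$ using the presentation \textbf{(R1)}--\textbf{(R5)} of \Cref{free strict compact closed category} and then show that it descends along the quotient by $(k)=(k)^\star$; this amounts to checking that the full subcategory of $\mathbf{Vect}$ spanned by the tensor products of the $V_k$, equipped with the chosen self-dualities, is strict compact closed (its canonical isomorphisms $u$, $v$, $w$ being identities), which again rests on the symmetry of $i^{(k)}$ and $e^{(k)}$, so the substantive content of the two approaches coincides.
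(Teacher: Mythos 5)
Your proposal is correct and follows essentially the same route as the paper: fix $Y$ on objects and on the elementary generators $b_{(k),(l)}$, $i_{(k)}$, $e_{(k)}$ (forced by the hypotheses and symmetry, giving uniqueness), then verify the relations \textbf{(A1)}--\textbf{(A5)} of \Cref{generators and relations of cbr} in $\mathbf{Vect}$, with the sliding relation \textbf{(A2)} being the only relation needing a computation — which the paper does on basis vectors, amounting to exactly your observation that both composites send $v\otimes w\otimes v'$ to $e^{(k)}(v\otimes v')\,w$. The only differences are cosmetic (your explicit check that symmetry propagates from elementary braidings to all braidings, and the sketched alternative via $F''\mathbf N$), so no further comparison is needed.
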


\begin{proof}
On object level, the values of $Y$ are uniquely and unambiguously determined by the requirement that $Y((k))= V_k$ for all $k\in\mathbb{N}$ because the objects of $\mathbf{cBr}$ are just the finite tensor products of objects of the form $(k)$, and $Y$ has to respect the monoidal structure.
(In particular, we have $Y(I)=\mathbb{R}$.)

By \Cref{generators and relations of cbr}, $\mathbf{cBr}$ is generated by the elementary morphisms $e_{(k)}$ $i_{(k)}$ and $b_{(k), (l)}$.
Thus, the values of $Y$on morphisms are uniquely determined by the requirements $Y(i_{(k)})=i^{(k)}$ and $Y(e_{(k)})=e^{(k)}$ for all $k \in \mathbb{N}$, and by the requirement $Y(b_{(k), (l)}) = b_{V_k,V_l}$, $k, l \in \mathbb{N}$, of being a symmetric monoidal functor.
To show that $Y$ is unambiguously defined on morphisms, we have to verify that the relations \textbf{(C1)} to \textbf{(C4)} of \Cref{definition cbr} are valid in $\mathbf{Vect}$ after applying $Y$ to every elementary morphism and every identity morphism in these relations.
In view of \Cref{generators and relations of cbr}, we can instead verify the relations \textbf{(A1)} to \textbf{(A5)} as follows.
The zig-zag relation \textbf{(A1)} is satisfied by the definition of the duality structure.
To verify relation \textbf{(A2)}, we fix a bases $\{v_1^{(k)},...,v_{d_k}^{(k)}\}$ and $\{v_1^{(l)},...,v_{d_l}^{(l)}\}$ of the finite dimensional vector spaces $V_k$ and $V_l$, respectively.
Then, the desired relation follows by noting that, for $\eta,\nu\in\{1,...,d_k\}$ and $\mu\in\{1,...,d_l\}$,
\begin{align*}
		(e^{(k)}\otimes 1_{V_l})((1_{V_k}\otimes b_{V_l,V_k})
		(v_\eta^{(k)}\otimes v_\mu^{(l)}\otimes v_\nu^{(k)}))
		&=(e^{(k)}\otimes 1_{V_l})(v_\eta^{(k)}\otimes v_\nu^{(k)}
		\otimes v_\mu^{(l)})\\
		&\hspace{-5em}=e^{(k)}(v_\eta^{(k)}\otimes v_\nu^{(k)})v_\mu^{(l)}\\
		&\hspace{-5em}=(1_{V_l}\otimes e^{(k)})(v_\mu^{(l)}
		\otimes v_\eta^{(k)}\otimes v_\nu^{(k)})\\
		&\hspace{-5em}=(1_{V_l}\otimes e^{(k)})((b_{V_k,V_l}\otimes 1_{V_k})
		(v_\eta^{(k)}\otimes v_\mu^{(l)}\otimes v_\nu^{(k)})).
\end{align*}
Relation \textbf{(A3)} is satisfied because $i$ is a symmetric copairing.
Moreover, the transposition $b$ clearly satisfies relation \textbf{(A4)}.
Finally, $b$ is a well-known solution of the \textit{Yang-Baxter equation} so that relation \textbf{(A5)} is satisfied as well.
\end{proof}

\section{Proof of \Cref{main theorem}}\label{proof of main theorem}

The proof of \Cref{main theorem} presented here is based on the proof of \Cref{main1 theorem} given in Chapter 5 of \cite{mue}.
It is divided into two parts as follows.
The first part culminates in \Cref{faithfulOnLoops}, which states that the given symmetric strict monoidal functor $Y\colon \mathbf{cBr}\rightarrow \mathbf{Vect}$ is faithful if and only if it is faithful on loops (a notion that will be defined below).
As it turns out, \Cref{faithfulOnLoops} is a consequence of \Cref{prop:loops}, which takes place solely in the chromatic Brauer category. 
Secondly, we prove \Cref{y is a faithful functor} which classifies all symmetric strict monoidal functors $Y$ which are faithful on loops.
\\

For $k \in \mathbb{N}$, we define the \emph{$k$-loop} $\lambda_{(k)}=e_{(k)} \circ i_{(k)}$, which is an endomorphism of the identity object $I \in \operatorname{Ob}(\mathbf{cBr})$.
We call a morphism $\varphi \colon ([m],c)\rightarrow([m'],c')$ in $\mathbf{cBr}$ \emph{loop-free} if there is no morphism $\varphi_0 \colon ([m],c) \rightarrow ([m'],c')$ such that $\varphi = \lambda_{(k)}\otimes \varphi_0$ for some $k \in \mathbb{N}$.
Then, we obtain the following normal form for morphisms in $\mathbf{cBr}$.

\begin{lemma}\label{lemma normal form}
Any morphism $\varphi \colon ([m],c)\rightarrow([m'],c')$ in $\mathbf{cBr}$ can be written for some suitable $(l_k)_{k\in\mathbb{N}} \in \bigoplus_{k=0}^\infty\mathbb{N}$ in the form
\begin{align*}
	\varphi
	= \left(\bigotimes_{k=0}^{\infty}\lambda_{(k)}^{\otimes l_{k}}\right)
	\otimes (\beta \circ \varphi_{0} \circ \alpha),
\end{align*}
where $\alpha \colon ([m],c) \rightarrow ([m],c_0)$ and
$\beta \colon ([m'],c'_0) \rightarrow ([m'],c')$ are isomorphisms in $\mathbf{cBr}$, 
and $\varphi_{0} \colon ([m],c_0) \rightarrow ([m'],c'_0)$ is a loop-free 
morphism in $\mathbf{cBr}$ that can be written for some suitable
$(p_k)_{k\in\mathbb{N}}, (q_k)_{k\in\mathbb{N}} \in
\bigoplus_{k=0}^\infty\mathbb N$ in the form
\begin{displaymath}
\varphi_{0} = \bigotimes_{k=0}^\infty 1_{([|c^{-1}(k)|-2p_k],k)} 
\otimes e_{(k)}^{\otimes p_k}\otimes i_{(k)}^{\otimes q_k}.
\end{displaymath}
(In particular, note that the maps $c_0\colon [m]\rightarrow \mathbb{N}$ and 
$c_0' \colon [m'] \rightarrow \mathbb{N}$ are monotone, and satisfy
$|c_0^{-1}(k)| - 2p_{k} = |c_0'^{-1}(k)| - 2q_{k}$ for all $k \in \mathbb{N}$.)
\end{lemma}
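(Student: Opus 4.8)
The plan is to argue by induction on the length of a word in the elementary generators $b_{(k),(l)}$, $i_{(k)}$, $e_{(k)}$ representing $\varphi$, using the presentation of $\mathbf{cBr}$ recorded in \Cref{generators and relations of cbr}. The key observation is that one can organize any such word so that every loop $\lambda_{(k)} = e_{(k)} \circ i_{(k)}$ that can be ``detached'' is pulled out to the front as a tensor factor, while the remainder is brought into the stated loop-free shape. First I would prove the statement for the loop-free part, i.e.\ that every loop-free morphism $\varphi_0\colon([m],c_0)\to([m'],c_0')$ is, up to pre- and post-composition with isomorphisms (which permute and recolor the $m$ strands), of the form $\bigotimes_{k=0}^\infty 1_{([|c_0^{-1}(k)|-2p_k],k)}\otimes e_{(k)}^{\otimes p_k}\otimes i_{(k)}^{\otimes q_k}$. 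Morphisms in the (chromatic) Brauer category are, by the topological picture in \Cref{remark cbr intuition}, ambient isotopy classes of colored $1$-manifolds embedded in a slab, with $m$ endpoints on the left wall and $m'$ on the right; since each component is colored by a single element of $\mathbb{N}$, a component can only be a \emph{through-strand} joining a left endpoint to a right endpoint of the same color, a \emph{left cap} joining two left endpoints of the same color (contributing an $e_{(k)}$ after sorting), a \emph{right cap} joining two right endpoints of the same color (contributing an $i_{(k)}$), or a \emph{closed loop} of some color $k$ (contributing a factor $\lambda_{(k)}$). Loop-freeness exactly removes the last possibility, and sorting the endpoints color-by-color and within each color into the order (through-strands, then capped pairs) is achieved by the isomorphisms $\alpha$ and $\beta$; this also forces $c_0,c_0'$ monotone and gives the identity $|c_0^{-1}(k)|-2p_k=|c_0'^{-1}(k)|-2q_k$ since both sides count the through-strands of color $k$. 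Using relations \textbf{(A1)}--\textbf{(A5)} one checks that any two diagrams with the same underlying colored matching represent the same morphism, so the above normal form is well-defined on morphisms, not just on diagrams.

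Second, I would peel off the loops. Given an arbitrary $\varphi$, its diagram has some number of closed components; let $l_k$ be the number of closed loops of color $k$ (finitely many are nonzero). By a sequence of ambient isotopies each closed loop of color $k$ can be shrunk into a small disk and pushed to the bottom of the slab, away from the other components; in algebraic terms, repeatedly applying the interchange law of the monoidal structure (together with \textbf{(A1)}) rewrites $\varphi$ as $\bigl(\bigotimes_{k=0}^\infty \lambda_{(k)}^{\otimes l_k}\bigr)\otimes \varphi_0'$, where $\varphi_0'$ is the diagram with all closed components deleted. By construction $\varphi_0'$ is loop-free, so applying the first part to $\varphi_0'$ yields isomorphisms $\alpha,\beta$ and a loop-free morphism $\varphi_0$ in the asserted form with $\varphi_0' = \beta\circ\varphi_0\circ\alpha$. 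Combining the two steps gives exactly the claimed expression for $\varphi$.

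The main obstacle is the \emph{well-definedness} claim underlying both steps: a priori a morphism of $\mathbf{cBr}$ is an equivalence class of words modulo the relations \textbf{(C1)}--\textbf{(C4)} (equivalently \textbf{(A1)}--\textbf{(A5)}), so I must show that the combinatorial data extracted from a diagram — the colored matching of endpoints together with the multiset $(l_k)_k$ of closed-loop colors — is a genuine invariant of the morphism and not just of a chosen diagram, and conversely that this data is a complete invariant. One clean way to handle this is to note that $\mathbf{cBr}$ is the quotient of $F''\mathbf{N}$ by $(k)=(k)^\star$ (see \Cref{definition cbr}), that $F''\mathbf{N}$ is the free strict compact closed category on the \emph{discrete} category $\mathbf{N}$, and that Kelly--Laplaza's coherence theorem for free compact closed categories (\Cref{free strict compact closed category}) describes $\operatorname{Hom}$-sets explicitly; passing to the quotient that identifies each object with its dual, the morphisms become precisely colored Brauer diagrams with a number of free loops of each color, and composition is concatenation with loop-counting, exactly as in the topological picture. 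Granting this identification, both the existence of the normal form and the monotonicity/balance conditions are immediate bookkeeping; the only real work is the sorting argument, which I would phrase as: choose $\alpha$ to sort the left endpoints and $\beta^{-1}$ to sort the right endpoints so that, within each color block, the through-strands come first in matching order and the capped pairs come last in adjacent pairs, after which the diagram is visibly $\bigotimes_k 1\otimes e_{(k)}^{\otimes p_k}\otimes i_{(k)}^{\otimes q_k}$ up to the already-extracted loops.
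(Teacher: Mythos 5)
Your proposal is correct and matches the route the paper itself (implicitly) takes: the paper states \Cref{lemma normal form} without proof, relying on the diagrammatic description of morphisms of $\mathbf{cBr}$ in \Cref{remark cbr intuition}, which in turn rests on the Kelly--Laplaza description of $F''\mathbf{N}$ (\Cref{free strict compact closed category}) and the quotient in \Cref{definition cbr} --- exactly the identification of morphisms with colored matchings plus colored free loops that you invoke, followed by the same color-sorting via braiding isomorphisms $\alpha,\beta$ and peeling off of loops as tensor factors. Two minor remarks: the lemma is only an existence statement, so the completeness/invariance of the matching-plus-loop data (which you rightly flag as the delicate point, and which does need the Kelly--Laplaza coherence together with a check that the quotient $(k)=(k)^{\star}$ does not collapse hom-sets) is not strictly needed here, though it is used later in the paper; and your parenthetical that the caps contribute $e_{(k)}$ (domain side) and $i_{(k)}$ (codomain side) agrees with the paper's conventions.
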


We introduce some operations $\varphi^{\operatorname{op}}$, $^o\varphi$ and $\varphi^o$ on morphisms $\varphi \colon ([m],c) \rightarrow ([m'],c')$ in $\mathbf{cBr}$ as follows.
Define $\varphi^{\operatorname{op}}$ by
\begin{align}\label{varphiop}
	\left(e_{([m'],c')}\otimes 1_{([m],c)}\right)
	\circ \left(1_{([m'],c')}\otimes \varphi\otimes 1_{([m],c')}\right)
	\circ \left(1_{([m'],c')}\otimes i_{([m],c)}\right).
\end{align}
Using the triangular equations \textbf{(C3)} one can equivalently define $\varphi ^{\operatorname{op}}$ by
\begin{align*}
\left(1_{([m],c)}\otimes e_{([m'],c')}\right) \circ \left(1_{([m],c)}\otimes \varphi \otimes 1_{([m'],c')} \right) \circ \left(i_{([m],c)}\otimes 1_{([m'],c')}\right).
\end{align*}
Intuitively, $\varphi^{\operatorname{op}}$ can be obtained from $\varphi$ by reflecting a diagram as considered in \Cref{remark cbr intuition} along a vertical axis.
Note, that for isomorphisms $\alpha \colon ([m],c) \rightarrow ([m],c')$ the identity $\alpha^{\operatorname{op}}=\alpha^{-1}$ holds.
Also note the validity of the equations $i_{(k)}^{\operatorname{op}}=e_{(k)}$ and $e_{(k)}^{\operatorname{op}}=i_{(k)}$.
Furthermore, we define $^o\varphi\colon([m],c)\otimes([m'],c')\rightarrow ([0],c_\emptyset)$ and $\varphi^o\colon ([0],c_\emptyset)\rightarrow ([m],c)\otimes([m'],c')$ by $^o\varphi:=e_{([m],c)}\circ(1_{([m],c)}\otimes \varphi^{\operatorname{op}})$ and $\varphi^o:=(1_{([m],c)}\otimes \varphi)\circ i_{([m],c)}$, respectively.
\\

\begin{proposition}\label{prop:loops}
Let $\varphi,\psi\colon ([m],c)\rightarrow ([m'],c')$ be loop-free morphisms in $\mathbf{cBr}$.
Then $\varphi= \psi$ if and only if
	\begin{align}\label{loops}
		^o \varphi\circ\psi^o
		=\bigotimes_{k=0}^\infty\lambda_{(k)}^{
		\otimes \frac12(|c^{-1}(k)|+|c'^{-1}(k)|)}.
	\end{align}
\end{proposition}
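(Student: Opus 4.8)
The plan is to prove both directions by reducing everything to a diagrammatic/combinatorial statement inside $\mathbf{cBr}$, using the normal form provided by \Cref{lemma normal form} and the operations $\varphi \mapsto {}^o\varphi$, $\varphi \mapsto \varphi^o$ introduced above. The forward direction ($\varphi = \psi \Rightarrow$ (\ref{loops})) should be a direct computation: if $\varphi = \psi$, then ${}^o\varphi \circ \psi^o = {}^o\varphi \circ \varphi^o = e_{([m],c)} \circ (1_{([m],c)} \otimes \varphi^{\operatorname{op}}) \circ (1_{([m],c)} \otimes \varphi) \circ i_{([m],c)}$. Using the alternative description of $\varphi^{\operatorname{op}}$ via the triangular equations \textbf{(C3)} and the zig-zag relation \textbf{(A1)}, the composite $\varphi^{\operatorname{op}} \circ \varphi$ should "cancel" the non-loop part of $\varphi$ against its mirror image, leaving only the free loop components. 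Concretely, I would substitute the loop-free normal form $\varphi_0 = \bigotimes_k 1_{([|c^{-1}(k)|-2p_k],k)} \otimes e_{(k)}^{\otimes p_k} \otimes i_{(k)}^{\otimes q_k}$ (after absorbing the isomorphisms $\alpha,\beta$, which contribute nothing to loops since $\alpha^{\operatorname{op}} = \alpha^{-1}$), and check that each $e_{(k)}$ paired with an $i_{(k)}$ from $\varphi^{\operatorname{op}}$ produces exactly one $k$-loop $\lambda_{(k)}$, while each strand $1_{([\cdot],k)}$ gets capped off into a single $k$-loop as well; counting the total contribution of color $k$ gives $\tfrac12(|c^{-1}(k)| + |c'^{-1}(k)|)$, matching the right-hand side of (\ref{loops}).

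For the converse, suppose ${}^o\varphi \circ \psi^o = \bigotimes_k \lambda_{(k)}^{\otimes \frac12(|c^{-1}(k)|+|c'^{-1}(k)|)}$. The strategy is to "undo" the operations ${}^o(-)$ and $(-)^o$ to recover $\varphi$ and $\psi$ from the composite, exploiting the fact that $\varphi \mapsto \varphi^o$ and the capping construction are essentially invertible on loop-free morphisms. I would write $\psi^o = (1_{([m],c)} \otimes \psi) \circ i_{([m],c)} \colon I \to ([m],c) \otimes ([m'],c')$, and similarly express ${}^o\varphi$, and then observe that composing a morphism with its "mirror-bent" version and getting only loops forces the underlying matchings (perfect matchings of the boundary points, respecting colors) of $\varphi$ and $\psi$ to coincide — any discrepancy in the matchings would create a longer path component joining boundary points in the composite $({}^o\varphi \circ \psi^o)$, contradicting the fact that the composite is a disjoint union of loops with no boundary strands. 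Since loop-free morphisms in $\mathbf{cBr}$ are determined by their color-respecting boundary matching together with the loop-free condition (which pins down the normal form of \Cref{lemma normal form} uniquely once no free loop is allowed), equality of matchings yields $\varphi = \psi$.

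The main obstacle will be the converse direction, and specifically making rigorous the claim that "only loops appear $\Rightarrow$ matchings agree." The cleanest route is probably to apply the straightening/zig-zag move \textbf{(A1)} and the sliding relation \textbf{(A2)} systematically to bring ${}^o\varphi \circ \psi^o$ into normal form, then compare with \Cref{lemma normal form}: a free loop in the normal form of ${}^o\varphi\circ\psi^o$ can only arise from a closed component of the associated tangle, and the total number of closed components of the composite equals (number of boundary points)/2 precisely when $\varphi$ and $\psi$ induce the same matching; if they differ, at least one component is an open arc with endpoints on $([m],c) \otimes ([m'],c')$ — but this object is $I$ after bending, wait, no: here the subtlety is that the composite ${}^o\varphi \circ \psi^o \colon I \to I$ genuinely has empty boundary, so "open arcs" cannot occur, and instead a mismatch manifests as a \emph{smaller} number of loops (a mismatch merges several would-be loops into one), so the exponent $\frac12(|c^{-1}(k)|+|c'^{-1}(k)|)$ is attained on color $k$ \emph{only if} the color-$k$ matchings of $\varphi$ and $\psi$ agree. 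Turning this loop-counting heuristic into a clean inductive argument on $m + m'$ — peeling off one strand or one cup/cap at a time and invoking the uniqueness part of \Cref{lemma normal form} — is where the real work lies; I would organize it as an induction together with a lemma that composing loop-free $\varphi$ with ${}^o(-)$ of another loop-free morphism of the same type produces exactly $\frac12(|c^{-1}(k)|+|c'^{-1}(k)|)$ loops of color $k$ if and only if the matchings coincide.
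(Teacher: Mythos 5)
Your proposal matches the paper's proof in both directions: the forward implication via substituting the normal form of \Cref{lemma normal form}, absorbing the isomorphisms $\alpha,\beta$ (using $\alpha^{\operatorname{op}}=\alpha^{-1}$), and counting the resulting $k$-loops as $q_k-p_k+|c^{-1}(k)|=\tfrac12(|c^{-1}(k)|+|c'^{-1}(k)|)$; and the converse via the same tangle-counting argument, namely that $W({}^o\varphi)$ and $W(\psi^o)$ each consist of $\tfrac12(m+m')$ arcs, so the composite can contain at most $\tfrac12(m+m')$ loops, with equality forcing each arc of one to close up with a single arc of the other, i.e.\ the boundary matchings (and hence the loop-free morphisms) to coincide. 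The paper carries out this count directly rather than by the induction you anticipate, but the underlying idea is the same.
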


\begin{proof}
	If $\varphi=\psi$, then
	\begin{align*}
		^o\varphi\circ \varphi^o&=
		e_{([m],c)}\circ(1_{([m],c)}\otimes \varphi^{\operatorname{op}})
		\circ
		(1_{([m],c)}\otimes \varphi)\circ i_{([m],c)}\\
		&=e_{([m],c)}\circ (1_{([m],c)}\otimes 
		(\varphi^{\operatorname{op}}\circ \varphi))\circ i_{([m],c)}.
	\end{align*}
	Let us compute the expression $\varphi^{\operatorname{op}}\circ \varphi$
	seperately by using the normal form of $\varphi$ from \Cref{lemma normal form},
	\begin{align*}
		\varphi^{\operatorname{op}}\circ \varphi&=
		\alpha^{-1}\circ\left(\bigotimes_{k=0}^\infty1_{(
		[|c^{-1}(k)|-2p_k],\underline{k})}\otimes i_{(k)}^{\otimes p_k}\otimes 
		e_{(k)}^{\otimes q_k}\right)\circ \beta^{-1}\circ\\
		&\hspace{1.5em}\circ\beta\circ
		\left(\bigotimes_{k=0}^\infty
		1_{([|c^{-1}(k)|-2p_k],\underline{k})}\otimes
		e_{(k)}^{\otimes p_k}\otimes i_{(k)}^{\otimes q_k}\right)\circ \alpha\\
		&=\alpha^{-1}\circ
		\left(
		\bigotimes_{k=0}^\infty 1_{([|c^{-1}(k)|-2p_k],\underline{k})}
		\otimes (i_{(k)}\circ e_{(k)})^{\otimes p_k}\otimes
		(e_{(k)}\circ i_{(k)})^{\otimes q_k}
		\right)\circ \alpha\\
		&=
		\left[\alpha^{-1}\circ
		\left(
		\bigotimes_{k=0}^\infty 1_{([|c^{-1}(k)|-2p_k],\underline{k})}
		\otimes (i_{(k)}\circ e_{(k)})^{\otimes p_k}
		\right)\circ \alpha\right]
		\otimes \bigotimes_{k=0}^\infty\lambda_{(k)}^{\otimes q_k}.
	\end{align*}
	Now, for the isomorphism $\alpha$ we have the relations
	valid in $\mathbf{cBr}$ given by
	\begin{align*}
		e_{([m],c)}\circ (1_{([m],c)}\otimes \alpha^{-1})&=
		e_{([m],c_0)}\circ(\alpha\otimes 1_{([m],c_0)}),\,\text{and}\\
		(1_{([m],c)}\otimes \alpha)\circ i_{([m],c)}&=
		(\alpha^{-1}\otimes 1_{([m],c_0)})\circ i_{([m],c_0)}.
	\end{align*}
	(Indeed, if we interpret $\alpha$ as a bijection on $[m]$ and forgetting about $c$, we can write $\alpha$ as a product of adjacent transpositions
	$\alpha =T_1\circ\cdots\circ T_N$.
	Then $T_i$ can be shifted along $e_{([m],\star)}$ (resp. $i_{([m],\star)}$) from $T_i\otimes 1$ to $1\otimes T_i$, but in the reverse order.
	Note that during this procedure, the coloring $\star$ is
	changing after each step, after the last shift of $T_N$ it has become
	$c_0$.) This leads to
	\begin{align*}
		^o\varphi\circ \varphi^o\\
		&\hspace{-3.5em}=
		\left[
		e_{([m],c_0)}\circ\left(1_{([m],c_0)}\otimes\left(
		\bigotimes_{k=0}^\infty 1_{([|c^{-1}(k)|-2p_k],\underline{k})}
		\otimes (i_{(k)}\circ e_{(k)})^{\otimes p_k}\right)
		\right)
		\circ i_{([m],c_0)}\right]\otimes\\
		&\hspace{21em}
		\otimes\bigotimes_{k=0}^\infty\lambda_{(k)}^{\otimes q_k}\\
		&\hspace{-3.5em}=
		\left[
		e_{([2\sum p_k],\tilde c)}\circ\left(1_{([2\sum p_k],\tilde c)}\otimes
		\left(\bigotimes_{k=0}^\infty(i_{(k)}\circ e_{(k)})^{\otimes p_k}
		\right)\right)\circ i_{([2\sum p_k],\tilde c)}\right]\otimes\\
		&\hspace{21em}
		\otimes\bigotimes_{k=0}^\infty\lambda_{(k)}^{\otimes(q_k-2p_k+|
		c^{-1}(k)|)}
		\\
		&\hspace{-4em}\overset{\eqref{varphiop}}=
		\left[\left(\bigotimes_{k=0}^\infty i_{(k)}^{\otimes p_k}\right)^{\operatorname{op}}
		\circ
		\left(\bigotimes_{k=0}^\infty e_{(k)}^{\otimes p_k}\right)^{\operatorname{op}}
		\right]\otimes\bigotimes_{k=0}^\infty\lambda_{(k)}^{\otimes(q_k-2p_k+|
		c^{-1}(k)|)}\\
		&\hspace{-3.5em}=
		\bigotimes_{k=0}^\infty\lambda_{(k)}^{\otimes
		(q_k-p_k+|c^{-1}(k)|)}
		=\bigotimes_{k=0}^\infty\lambda_{(k)}^{\otimes
		\frac12(|c^{-1}(k)|+|c'^{-1}(k)|)},
	\end{align*}
	where $\tilde c\colon [2\sum_{k}p_k]\rightarrow \mathbb N$ is monotone
	and satisfies $|\tilde c^{-1}(k)|=2p_k$.\\

	Conversely, let $\varphi,\psi\colon ([m],c)\rightarrow ([m'],c')$ be two morphisms satisfying \eqref{loops}.
	Considering $^o\varphi$ and $\psi^o$ as being represented by embedded tangles in $[0,1] \times \mathbb{R}^3$ (compare \Cref{remark cbr intuition}) denoted by $W(^o\varphi)$ and $W(\psi^o)$, respectively, it suffices to show that for every component $C$ of $W(^a\varphi_0)$ there exists a component \reflectbox{$C$} of $W(\psi_0^a)$ such that $C$ and \reflectbox{$C$} have the same endpoints in $[m+m']$.
Let $P$ be a point in $[m+m']$ and let $P_\varphi$ and $P_\psi$ denote the other endpoint of the connected component $C$ and \reflectbox{$C$}, respectively, containing $P$.
Note first, that the number $\frac12(m+m')$ is the maximal number of loops $\{\lambda _{(k)} \, | \,  k \in \mathbb{N} \}$ which can be contained in $^a\varphi_0\circ\psi_0^a$, since $W(\psi_0^a)$ and $W(^a\varphi_0)$ each consist of $\frac12(m+m')$ distinguished connected components.
	This means that for every component $C$ of $W(^a\varphi_0)$ there
	is a component \reflectbox{$C$} of $W(\psi_0^a)$ such that
	$C$ and \reflectbox{$C$} close up to $S^1$. In other words,
	if $P\in C$ and $P\in\reflectbox{$C$}$, then $P_\varphi=P_\psi$.
\end{proof}

Let us now consider the given symmetric strict monoidal functor $Y\colon \mathbf{cBr}\rightarrow \mathbf{Vect}$.
Recall from the discussion in \Cref{Linear representations of the chromatic Brauer category} that $Y(([1],\underline{k}))=V_k$ is for all $k\in\mathbb{N}$ a finite dimensional vector space, whose dimension will be denoted by $d_k$.
By assumption, $d_{k} > 0$ for all $k$.

\begin{lemma}\label{lemma trace}
For all $k\in\mathbb{N}$ we have $Y(\lambda_{(k)})=d_k \; (>0)$.
\end{lemma}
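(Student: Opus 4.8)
The plan is to compute the linear map $Y(\lambda_{(k)}) \colon \mathbb{R} \rightarrow \mathbb{R}$ directly from the definition $\lambda_{(k)} = e_{(k)} \circ i_{(k)}$, using that $Y$ is monoidal so that $Y(\lambda_{(k)}) = Y(e_{(k)}) \circ Y(i_{(k)}) = e^{(k)} \circ i^{(k)}$, where $(i^{(k)}, e^{(k)})$ is the duality structure on $V_k$ attached to the representation $Y$. Since this is an endomorphism of $\mathbb{R}$, it is multiplication by a scalar, and the claim is precisely that this scalar equals $d_k = \dim V_k$.

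First I would fix a basis $\{v_1^{(k)}, \dots, v_{d_k}^{(k)}\}$ of $V_k$ and use the matrix description from \Cref{Linear representations of the chromatic Brauer category}: writing $X = \operatorname{Mat}(e^{(k)}) = (e^{(k)}_{pq})$ with $e^{(k)}_{pq} = e^{(k)}(v_p^{(k)} \otimes v_q^{(k)})$, the counit matrix $X$ is symmetric and invertible, and the unit $i^{(k)}$ is given by $i^{(k)}(1) = \sum_{p,q} (X^{-1})_{pq}\, v_p^{(k)} \otimes v_q^{(k)}$. Then I would simply compute
\begin{align*}
(e^{(k)} \circ i^{(k)})(1) &= e^{(k)}\left(\sum_{p,q=1}^{d_k} (X^{-1})_{pq}\, v_p^{(k)} \otimes v_q^{(k)}\right) = \sum_{p,q=1}^{d_k} (X^{-1})_{pq}\, X_{pq} = \operatorname{tr}(X^{-1} X^{\operatorname{T}}) = \operatorname{tr}(X^{-1}X) = d_k,
\end{align*}
using the symmetry $X^{\operatorname{T}} = X$ in the last step. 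This identifies $Y(\lambda_{(k)})$ with multiplication by $d_k$, and since $d_k > 0$ by assumption, the parenthetical positivity follows immediately.

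There is essentially no obstacle here: the computation is the standard observation that the composite counit-after-unit of a self-duality structure computes the dimension (the ``categorical trace of the identity''), and the zig-zag equation already guarantees $X$ is invertible so the expression is well-defined. The only minor point to be careful about is keeping track of the bookkeeping when $V_k \otimes V_k$ is identified via the chosen basis, and the fact that we are working with the Schauenburg tensor product rather than the ordinary one — but since the monoidal equivalence $(F,\xi,\xi_0)$ with $F$ the identity and $\xi_0$ the identity transports the duality structure faithfully, the scalar value of the composite $\mathbb{R} \to \mathbb{R}$ is unchanged, so the computation above goes through verbatim with $\odot$ in place of $\otimes$.
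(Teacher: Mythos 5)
Your argument is correct and follows essentially the same route as the paper: identify $Y(\lambda_{(k)})$ with $Y(e_{(k)})\circ Y(i_{(k)})$ by functoriality and invoke the trace formula $e\circ i=d_k$ for a duality structure. The only difference is that the paper simply cites this formula (Proposition 2.9 in \cite{ban3}), whereas you verify it by the explicit matrix computation $\sum_{p,q}(X^{-1})_{pq}X_{pq}=\operatorname{tr}(X^{-1}X)=d_k$, which is a perfectly adequate substitute.
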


\begin{proof}
By Proposition 2.9 in \cite{ban3}, the ``trace formula'' $e \circ i = d_{k}$ holds for any duality structure $(i, e)$ on $V_{k}$.
In particular, this applies to the duality structure $(Y(i_{(k)}), Y(e_{(k)}))$, and we have $Y(\lambda_{(k)}) = Y(e_{(k)}) \circ Y(i_{(k)})$.
\end{proof}

The symmetric strict monoidal functor $Y\colon \mathbf{cBr}\rightarrow \mathbf{Vect}$ is called \textit{faithful on loops} if for any two morphisms $\varphi, \psi\colon ([m],c)\rightarrow ([m'],c')$ in $\mathbf{cBr}$  the
condition $Y(\varphi)=Y(\psi)$ implies that there are a sequence $(l_k)_{k\in\mathbb N}\in \bigoplus_{k=0}^\infty\mathbb{N}$ and loop-free morphisms $\varphi_0$ and $\psi_0$ such that $\varphi=\bigotimes_k \lambda_{(k)}^{\otimes l_k}\otimes \varphi_0$ and $\psi=\bigotimes_k\lambda_{(k)}^{\otimes l_k}\otimes\psi_0$.

As an immediate consequence of \Cref{prop:loops} we obtain the following corollary.
\begin{corollary}\label{faithfulOnLoops}
	$Y\colon \mathbf{cBr}\rightarrow \mathbf{Vect}$ is faithful
	on loops if and only if $Y$ is faithful.
\end{corollary}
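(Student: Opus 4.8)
The plan is to prove \Cref{faithfulOnLoops} by unwinding the two definitions involved — ``faithful'' and ``faithful on loops'' — and showing that the gap between them is controlled exactly by \Cref{prop:loops}. Throughout I would freely use \Cref{lemma normal form}, which lets me assume every morphism is presented in the displayed normal form, and \Cref{lemma trace}, which identifies $Y(\lambda_{(k)})$ with the positive scalar $d_k$.

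First I would prove the easy direction: if $Y$ is faithful, then it is faithful on loops. Suppose $\varphi,\psi\colon ([m],c)\rightarrow([m'],c')$ satisfy $Y(\varphi)=Y(\psi)$. Since $Y$ is faithful, $\varphi=\psi$, so by \Cref{lemma normal form} we may write $\varphi=\psi=\bigotimes_k\lambda_{(k)}^{\otimes l_k}\otimes(\beta\circ\varphi_0\circ\alpha)$ with $\beta\circ\varphi_0\circ\alpha$ loop-free (one checks loop-freeness is preserved by pre- and post-composition with isomorphisms, since an isomorphism cannot create a $\lambda_{(k)}$ factor). Taking the loop-free morphism to be $\beta\circ\varphi_0\circ\alpha$ for both $\varphi$ and $\psi$ exhibits the required common sequence $(l_k)$, so $Y$ is faithful on loops.

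For the converse — the substantive direction — assume $Y$ is faithful on loops and let $\varphi,\psi\colon([m],c)\rightarrow([m'],c')$ satisfy $Y(\varphi)=Y(\psi)$; I must show $\varphi=\psi$. By the definition of faithful on loops there exist $(l_k)_{k\in\mathbb N}\in\bigoplus_k\mathbb N$ and loop-free morphisms $\varphi_0,\psi_0\colon([m],c)\rightarrow([m'],c')$ with $\varphi=\bigotimes_k\lambda_{(k)}^{\otimes l_k}\otimes\varphi_0$ and $\psi=\bigotimes_k\lambda_{(k)}^{\otimes l_k}\otimes\psi_0$. Applying $Y$ and using that $Y$ is a strict monoidal functor together with \Cref{lemma trace}, the hypothesis $Y(\varphi)=Y(\psi)$ becomes
\begin{align*}
\left(\prod_{k=0}^\infty d_k^{l_k}\right)\cdot Y(\varphi_0) = \left(\prod_{k=0}^\infty d_k^{l_k}\right)\cdot Y(\psi_0).
\end{align*}
Since each $d_k>0$, the common scalar prefactor is a nonzero real number, so $Y(\varphi_0)=Y(\psi_0)$. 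Thus it suffices to deduce $\varphi_0=\psi_0$ from $Y(\varphi_0)=Y(\psi_0)$, i.e. to reduce to the loop-free case, and here \Cref{prop:loops} enters: by that proposition, $\varphi_0=\psi_0$ is equivalent to the identity ${}^o\varphi_0\circ\psi_0^o=\bigotimes_k\lambda_{(k)}^{\otimes\frac12(|c^{-1}(k)|+|c'^{-1}(k)|)}$ in $\mathbf{cBr}$. Now I would apply $Y$ to the left-hand side: since the operations $(-)^{\operatorname{op}}$, ${}^o(-)$, $(-)^o$ are built from $\otimes$, $\circ$, identities, and the elementary units and counits — all of which $Y$ preserves strictly — we get $Y({}^o\varphi_0\circ\psi_0^o)={}^oY(\varphi_0)\circ Y(\psi_0)^o$, computed with the duality structures on the $V_k$. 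Because $Y(\varphi_0)=Y(\psi_0)$, the argument of the ``if $\varphi=\psi$'' half of \Cref{prop:loops}'s proof, carried out inside $\mathbf{Vect}$, shows this equals $\prod_k d_k^{\frac12(|c^{-1}(k)|+|c'^{-1}(k)|)}$, which is exactly $Y$ applied to the right-hand side of the displayed identity. So $Y$ of both sides of the \Cref{prop:loops} criterion agree — but that is not yet what I want; I want the criterion itself to hold in $\mathbf{cBr}$.

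The main obstacle is precisely this last inference: I need to pass from ``$Y$ sends the two sides of the \Cref{prop:loops} identity to the same vector-space morphism'' back to ``the identity holds in $\mathbf{cBr}$'', which looks circular since that is a faithfulness statement. I expect the correct route is to observe that \emph{all} morphisms in sight here are endomorphisms of $I$, i.e. elements of $\operatorname{End}_{\mathbf{cBr}}(I)$, and that by \Cref{lemma normal form} every endomorphism of $I$ is uniquely a tensor product $\bigotimes_k\lambda_{(k)}^{\otimes n_k}$ of loops (the loop-free part being forced to be $1_I$), so $\operatorname{End}_{\mathbf{cBr}}(I)\cong\bigoplus_k\mathbb N$ as a monoid, with ${}^o\varphi_0\circ\psi_0^o$ corresponding to some tuple $(n_k)$; applying $Y$ sends this tuple to $\prod_k d_k^{n_k}$, and the computation above in $\mathbf{Vect}$ combined with $Y(\varphi_0)=Y(\psi_0)$ pins down $\prod_k d_k^{n_k}=\prod_k d_k^{\frac12(|c^{-1}(k)|+|c'^{-1}(k)|)}$ — but since we are only trying to establish faithfulness \emph{on loops} implies faithfulness, and $Y$ is faithful on loops, the two loop-endomorphisms ${}^o\varphi_0\circ\psi_0^o$ and $\bigotimes_k\lambda_{(k)}^{\otimes\frac12(\cdots)}$ (both already in loop normal form, hence their loop-free parts trivially agree) must be equal in $\mathbf{cBr}$. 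That is the $\varphi_0=\psi_0$ we needed via \Cref{prop:loops}, and then $\varphi=\bigotimes_k\lambda_{(k)}^{\otimes l_k}\otimes\varphi_0=\bigotimes_k\lambda_{(k)}^{\otimes l_k}\otimes\psi_0=\psi$, completing the proof. I would double-check that the hypothesis ``faithful on loops'' is genuinely what powers this final step and that there is no hidden assumption that $Y$ is injective on $\operatorname{End}(I)$ beyond what faithfulness-on-loops provides.
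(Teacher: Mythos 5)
Your proposal is correct and follows essentially the same route as the paper's proof: both directions hinge on \Cref{lemma normal form}, the positivity $Y(\lambda_{(k)})=d_k>0$ from \Cref{lemma trace} to strip off the common loop factor, and \Cref{prop:loops} applied via the endomorphism ${}^o\varphi_0\circ\psi_0^o$ of $I$, with faithfulness on loops forcing equality of endomorphisms of $I$ because their loop-free parts are necessarily $1_I$. The only differences are cosmetic: you treat the general case directly rather than first establishing faithfulness on loop-free morphisms, and you spell out both the trivial direction and the step the paper leaves terse.
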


\begin{proof}
Let $\varphi_0,\psi_0\colon ([m],c)\rightarrow([m],c')$ be loop-free such that $Y(\varphi_0)=Y(\psi_0)$.
We need to show that $\varphi = \psi$.
Since $Y(\varphi) = Y(\psi)$, we have $Y(\varphi^o_0)=Y(\psi^o_0)$.
Therefore,
\begin{align*}
	Y(^o\varphi_0\circ \psi_0^o)
	&=Y(^o\varphi_0)\circ Y(\psi_0^o)
	=Y(^o\varphi_0)\circ 
	Y(\varphi_0^o)=Y(^o\varphi_0\circ\varphi_0^o)\\
	&\hspace{-1.3em}\overset{\text{Prop. } \ref{prop:loops}}
	=Y\left(\bigotimes_{k=0}^\infty \lambda_{(k)}^{\otimes\frac12
		(|c^{-1}(k)|+|c'^{-1}(k)|)}\right).
\end{align*}
Now, under the assumption that $Y$ is faithful on loops, \Cref{prop:loops} implies that $\varphi_0=\psi_0$.
Hence, $Y$ is also faithful on loop-free morphisms.
Now let $\varphi,\psi\colon ([m],c)\rightarrow([m'],c')$ be morphisms (possibly containing loops) such that $Y(\varphi)=Y(\psi)$, and rewrite them as $\varphi=\big(\bigotimes_k \lambda_{(k)}^{\otimes l_k}\big)\otimes \varphi_0$ and $\psi=\big(\bigotimes_k \lambda_{(k)}^{\otimes l_k}\big)\otimes \psi_0$, for a sequence $(l_k)_{k\in\mathbb N} \in \bigoplus_k\mathbb{N}$ and loop-free morphisms $\varphi_0, \psi_0\colon ([m],c)\rightarrow ([m'],c')$.
By \Cref{lemma trace}, we have in particular $Y(\lambda_{(k)})=d_k>0$ for all $k\in\mathbb{N}$.

Hence, we obtain
\begin{align*}
	Y(\varphi_0)=\frac1{\prod_k d_k^{l_k}} Y(\varphi)
	=\frac1{\prod_k d_k^{l_k}}Y(\psi)=Y(\psi_0),
\end{align*}
i.e. $\varphi_0=\psi_0$ and therefore $\varphi=\psi$.
\end{proof}

If $Y$ is faithful on loops, it is clear that the dimension $d_k$ of $V_k$ needs to satisfy for all $(l_k)_{k\in\mathbb{N}}\in\bigoplus_{k=0}^\infty\mathbb{Z}$ the implication \eqref{condition}, namely
\begin{displaymath}
	\prod_{k=0}^\infty d_k^{l_k}=1\qquad\Rightarrow \qquad
	l_k=0 \text{ for all $k\in\mathbb{N}$}.
\end{displaymath}

\begin{theorem}\label{y is a faithful functor}
Suppose that $d_k>0$ for all $k\in\mathbb{N}$, and that the implication 
\eqref{condition} holds for all sequences $(l_k)_{k\in\mathbb N}\in 
\bigoplus_{k=0}^\infty \mathbb Z$ . Then, the functor $Y\colon 
\mathbf{cBr}\rightarrow\mathbf{Vect}$ is faithful on loops.
\end{theorem}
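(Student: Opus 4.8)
The plan is as follows. Start with morphisms $\varphi,\psi\colon ([m],c)\rightarrow([m'],c')$ satisfying $Y(\varphi)=Y(\psi)$; by the definition of faithfulness on loops it suffices to produce a single sequence $(l_k)_{k\in\mathbb{N}}\in\bigoplus_{k}\mathbb{N}$ and loop-free morphisms realising both $\varphi$ and $\psi$ over it. Using \Cref{lemma normal form} I would first write $\varphi=\left(\bigotimes_{k=0}^{\infty}\lambda_{(k)}^{\otimes a_{k}}\right)\otimes\varphi_{0}$ and $\psi=\left(\bigotimes_{k=0}^{\infty}\lambda_{(k)}^{\otimes b_{k}}\right)\otimes\psi_{0}$ with $\varphi_{0},\psi_{0}\colon ([m],c)\rightarrow([m'],c')$ loop-free and $(a_{k})_{k},(b_{k})_{k}\in\bigoplus_{k}\mathbb{N}$. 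The whole task then reduces to showing $a_{k}=b_{k}$ for all $k$, for then $l_{k}:=a_{k}=b_{k}$ together with $\varphi_{0},\psi_{0}$ witnesses faithfulness on loops.

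To detect the loop counts I would pass to the commutative monoid $\operatorname{End}_{\mathbf{cBr}}(I)$ by ``closing up''. Concretely, consider the endomorphism $\Theta_{\varphi}:={}^{o}\varphi\circ\varphi^{o}$ of $I$. On the one hand, since the loops $\lambda_{(k)}$ are endomorphisms of the unit $I$, they commute past all tensor factors in the defining expressions of $\varphi^{o}$, ${}^{o}\varphi$ and (via \eqref{varphiop}) of $\varphi^{\operatorname{op}}$; together with $\lambda_{(k)}^{\operatorname{op}}=\lambda_{(k)}$ and the interchange law this yields
\[
\Theta_{\varphi}=\left(\bigotimes_{k=0}^{\infty}\lambda_{(k)}^{\otimes 2a_{k}}\right)\otimes\bigl({}^{o}\varphi_{0}\circ\varphi_{0}^{o}\bigr),
\]
and \Cref{prop:loops}, applied with the loop-free morphism $\varphi_{0}$ playing the role of both $\varphi$ and $\psi$, evaluates ${}^{o}\varphi_{0}\circ\varphi_{0}^{o}=\bigotimes_{k}\lambda_{(k)}^{\otimes D_{k}}$ with $D_{k}=\tfrac{1}{2}\bigl(|c^{-1}(k)|+|c'^{-1}(k)|\bigr)\in\mathbb{N}$. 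Hence $\Theta_{\varphi}=\bigotimes_{k}\lambda_{(k)}^{\otimes(2a_{k}+D_{k})}$ in $\mathbf{cBr}$, and likewise $\Theta_{\psi}=\bigotimes_{k}\lambda_{(k)}^{\otimes(2b_{k}+D_{k})}$. On the other hand, $\Theta_{\varphi}$ is assembled from $\varphi$ together with unit, counit and identity morphisms of the common (co)domain $([m],c)$, $([m'],c')$ only, so the hypothesis $Y(\varphi)=Y(\psi)$ forces $Y(\varphi^{\operatorname{op}})=Y(\psi^{\operatorname{op}})$ and then $Y(\Theta_{\varphi})=Y(\Theta_{\psi})$.

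Combining the two observations with \Cref{lemma trace} (which gives $Y(\lambda_{(k)})=d_{k}>0$) yields
\[
\prod_{k=0}^{\infty}d_{k}^{\,2a_{k}+D_{k}}=Y(\Theta_{\varphi})=Y(\Theta_{\psi})=\prod_{k=0}^{\infty}d_{k}^{\,2b_{k}+D_{k}},
\]
hence $\prod_{k}d_{k}^{\,2(a_{k}-b_{k})}=1$ with $\bigl(2(a_{k}-b_{k})\bigr)_{k}\in\bigoplus_{k}\mathbb{Z}$. The standing hypothesis that implication \eqref{condition} holds for all such sequences then forces $a_{k}=b_{k}$ for every $k$, which is exactly what was needed; combined with \Cref{faithfulOnLoops} this completes the proof of \Cref{main theorem}.

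I expect the only delicate point to be the bookkeeping behind the loop-factoring identity $\Theta_{\varphi}=\left(\bigotimes_{k}\lambda_{(k)}^{\otimes 2a_{k}}\right)\otimes({}^{o}\varphi_{0}\circ\varphi_{0}^{o})$ --- that is, checking that the copies of $\lambda_{(k)}$ occurring in the normal form of $\varphi$ pull out of $\varphi^{o}$, ${}^{o}\varphi$ and $\varphi^{\operatorname{op}}$ and collect with the correct multiplicities --- which amounts to a careful but routine use of the functoriality of $\otimes$, the interchange law, and the centrality of $\operatorname{End}_{\mathbf{cBr}}(I)$. Everything downstream of the invocation of \Cref{prop:loops} is then a one-line computation in $\mathbf{Vect}$.
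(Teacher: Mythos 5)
Your proposal is correct, and it reaches the conclusion by a genuinely different mechanism than the paper, even though the overall strategy (decompose via \Cref{lemma normal form}, extract a positive scalar that records the loop multiplicities up to a shift depending only on $([m],c)$ and $([m'],c')$, then invoke implication \eqref{condition}) is the same. The paper stays in $\mathbf{Vect}$: it computes the ordinary matrix trace $\operatorname{Tr}\bigl(Y(\varphi_0^{\operatorname{op}}\circ\varphi_0)\bigr)$ of the loop-free part directly from the normal form, using cyclicity of the trace, multiplicativity under tensor products, and $\operatorname{Tr}(i^{(k)}\circ e^{(k)})=d_k$, and then compares $\operatorname{Tr}(Y(\varphi\circ\varphi^{\operatorname{op}}))$ with $\operatorname{Tr}(Y(\psi\circ\psi^{\operatorname{op}}))$. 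You instead perform the closure upstairs in $\mathbf{cBr}$, observing that $\Theta_{\varphi}={}^{o}\varphi\circ\varphi^{o}$ lies in $\operatorname{End}_{\mathbf{cBr}}(I)$ and equals $\bigotimes_k\lambda_{(k)}^{\otimes(2a_k+D_k)}$ by pulling the loops out and reusing the first half of \Cref{prop:loops} with $\varphi_0$ in both slots; only then do you apply $Y$ and \Cref{lemma trace}. The delicate point you flag is indeed fine: for $\Lambda\in\operatorname{End}_{\mathbf{cBr}}(I)$ one has $\Lambda\otimes 1_X=1_X\otimes\Lambda$ (naturality of $b$ plus $b_{X,I}=1_X$ and the inverse law), so $A\circ(\Lambda\otimes B)\circ C=\Lambda\otimes(A\circ B\circ C)$ by the interchange law, which gives $\varphi^{\operatorname{op}}=\Lambda\otimes\varphi_0^{\operatorname{op}}$ from \eqref{varphiop} and hence the stated factorization of $\Theta_\varphi$, with $\Lambda\circ\Lambda=\Lambda\otimes\Lambda$ by Eckmann--Hilton in $\operatorname{End}(I)$. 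In fact the two proofs compute literally the same number: since the pairing and copairing of a duality structure are represented by mutually inverse symmetric matrices, $Y({}^{o}\varphi\circ\varphi^{o})=\operatorname{Tr}\bigl(Y(\varphi^{\operatorname{op}}\circ\varphi)\bigr)$. What your route buys is that no trace identities are needed and the computation of \Cref{prop:loops} is reused rather than redone in linear algebra (note the paper itself remarks on a second, Kronecker-product-based proof avoiding the trace); what the paper's route buys is a self-contained calculation in $\mathbf{Vect}$ that does not require the centrality bookkeeping for $\operatorname{End}_{\mathbf{cBr}}(I)$.
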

\begin{proof}
Let $\varphi_0\colon ([m],c)\rightarrow([m],c')$ be a loop-free morphisms presented in its normal form $\varphi_0=\beta\circ\tilde\varphi_0\circ\alpha$ (see \Cref{lemma normal form}), with $\tilde\varphi_0=\bigotimes_{k}1_{([|c^{-1}(k)|-2p_k], \underline{k})}\otimes e_{(k)}^{\otimes p_k}\otimes i_{(k)}^{\otimes q_k}$.
We will compute the trace $\operatorname{Tr}(Y(\varphi_0^{\operatorname{op}}\circ \varphi_0))$ of the linear map $Y(\varphi_0^{\operatorname{op}}\circ \varphi_0)$.
Recall that the trace is invariant under cyclic permutation, and the trace of the tensor product of two matrices is the product of their traces.
We will also use the identity
$$
\operatorname{Tr} (i^{(k)}\circ e^{(k)}) = \operatorname{Tr} (e^{(k)}\circ i^{(k)})=e^{(k)}\circ i^{(k)} = Y(\lambda_{(k)}) \stackrel{\text{\Cref{lemma trace}}}{=} d_k.
$$
Thus, we obtain
\begin{align*}
	\operatorname{Tr}(Y(\varphi_0^{\operatorname{op}}\circ \varphi_0))
	&=\operatorname{Tr}\big(Y(\alpha^{-1})\circ
	Y(\tilde\varphi_0^{\operatorname{op}})\circ Y(\beta^{-1})\circ
	Y(\beta)\circ Y(\tilde\varphi_0)\circ Y(\alpha)\big)\\
	&=\operatorname{Tr}\big(Y(\tilde \varphi_0^{\operatorname{op}})\circ 
	Y(\tilde\varphi_0)\big)\\
	&=\operatorname{Tr}\left(Y\left(
	\bigotimes_{k=0}^\infty\lambda_{(k)}^{\otimes q_k}\otimes
		\left(
		\bigotimes_{k=0}^\infty 1_{([|c^{-1}(k)|-2p_k],\underline{k})}
		\otimes (i_{(k)}\circ e_{(k)})^{\otimes p_k}
		\right)
	\right)\right)\\
	&=\prod_{k=0}^\infty
	d_k^{q_k}\cdot \operatorname{Tr}(1_{V_k})^{|c^{-1}(k)|-2p_k}
	\cdot \operatorname{Tr}\big(i^{(k)}\circ e^{(k)}\big)^{p_k}\\
	&=\prod_{k=0}^\infty d_k^{|c^{-1}(k)|+q_k-p_k}
	=\prod_{k=0}^\infty d_k^{\frac12(|c^{-1}(k)|+|c'^{-1}(k)|)}.
\end{align*}
Note two things: The number $\operatorname{Tr}
(Y(\varphi_0^{\operatorname{op}}\circ \varphi_0))$ does not 
vanish in any case and it
only depends on the domain $([m],c)$ and codomain $([m],c')$ of the
morphism $\varphi_0$.

Now, let $\varphi,\psi\colon ([m],c)\rightarrow ([m'],c')$ such that $Y(\varphi)=Y(\psi)$.
Then there are sequences $(\mu_k)_{k\in\mathbb N}, (\nu_k)_{k\in\mathbb N} \in \bigoplus_{k=0}^\infty \mathbb N$ and loop-free morphisms $\varphi_0, \psi_0\colon ([m],c)\rightarrow ([m'],c')$ such that $\varphi=\bigotimes_{k=0}^{\infty} \lambda_{(k)}^{\otimes \mu_k}\otimes \varphi_0$ and $\psi=\bigotimes_{k=0}^{\infty} \lambda_{(k)}^{\otimes \nu_k}\otimes \psi_0$.
Then, $Y(\varphi)=Y(\psi)$ implies that $Y(\varphi^{\operatorname{op}})=Y(\psi^{\operatorname{op}})$ by \eqref{varphiop} and $Y$ being a strict monoidal functor.
Consequently,
\begin{align*}
	\prod_{k=0}^\infty d_k^{2\mu_k}\cdot \operatorname{Tr}
	(Y(\varphi_0\circ\varphi^{\operatorname{op}}_0))
	&=\operatorname{Tr}(Y(\varphi\circ\varphi^{\operatorname{op}}))\\
	&=\operatorname{Tr}(Y(\psi\circ\psi^{\operatorname{op}}))
	=\prod_{k=0}^\infty d_k^{2\nu_k}\cdot \operatorname{Tr}
	(Y(\psi_0\circ\psi_0^{\operatorname{op}})),
\end{align*}
which is equivalent to $\prod_kd_k^{2(\mu_k-\nu_k)}=1$. By implication 
\eqref{condition}, $\mu_k=\nu_k$ for all $k\in\mathbb{N}$. Hence, $Y$ is faithful 
on loops.
\end{proof}

This completes the proof of \Cref{main theorem}.

\begin{remark}[an alternative proof]
Based on Section 3.2 in \cite{wra}, we have an alternative proof of \Cref{y is a faithful functor} which does not make use of a trace argument, but exploits the Kronecker product of matrices instead.
\end{remark}

\section{Positive TFTs, fold maps, and exotic Kervaire spheres}\label{Linearization of the fold map positive TFTs}

Banagl \cite{ban2, ban3} has employed the Brauer category \textbf{Br} as well as singularity theory of fold maps to construct a high-dimensional positive TFT which is defined on smooth cobordisms.
As an application, he has shown that the state sum of his theory can distinguish exotic smooth structures on spheres from the standard smooth structure.
Banagl's construction is sketched in Section 10 of \cite{ban2} as an explicit example of the general framework of positive TFTs, and has been implemented in full detail in \cite{ban3}.
In the present section, we construct a refinement of Banagl's theory in which we replace the Brauer category \textbf{Br} by its chromatic enrichment \textbf{cBr}.
Our \Cref{theorem kervaire spheres} will illustrate the power of our state sum invariant by showing that the associated aggregate invariant can detect exotic Kervaire spheres in infinitely many dimensions.
On the other hand, we point out that the gluing axiom of positive TFTs ensures that our state sum invariant is computable by chopping cobordisms into pieces and computing their state sum invariants.

The present section is structured as follows.
In \Cref{general framework}, we outline the features of Banagl's general framework of positive TFTs, and explain the abstract process of quantization within the framework.
Next, \Cref{fields and action functional} provides the concrete definitions of fold fields and the \textbf{cBr}-valued action functional.
In particular, we point out the changes that arise from using the chromatic Brauer category instead of \textbf{Br}.
Then, quantization of our data is discussed in \Cref{quantization}, where we indicate carefully the necessary modifications in the algebraic process of so-called profinite idempotent completion.
Finally, in \Cref{aggregate invariant}, we define the aggregate invariant of homotopy spheres, and sketch our application to exotic Kervaire spheres.

\subsection{General framework}\label{general framework}
In \cite{ban2}, Banagl presents a new approach to the construction of certain TFTs in arbitrary dimension.
The basic idea is to modify Atiyah's original axioms \cite{ati} by formulating them over semirings instead of rings.
Compared to a ring, a semiring is not required to have additive inverses, i.e. ``negative'' elements.
As a result, Banagl coins the notion of \emph{positive} TFTs.
He shows that any system of so-called fields and action functionals gives rise to a positive TFT by means of a process he calls quantization in analogy with theoretical physics.
In order to avoid set theoretic difficulties that may arise in the definition of the Feynman path integral, Banagl employs the concept of complete semirings due to Eilenberg \cite{eil}.
The reason is that a complete semiring has a summation law that allows to sum families of elements indexed by arbitrary index sets.
Positive TFTs can motivate the construction of new invariants for smooth manifolds.
For example, Banagl defines the aggregate invariant of homotopy spheres (see Section 10 in \cite{ban3}). \\

In the following, we outline Banagl's construction of a $n$-dimensional positive TFT from given systems of fields and action functionals via the process of quantization (see Sections 4 to 6 in \cite{ban2}).

By definition, a system $\mathcal{F}$ of fields assigns to every closed $(n-1)$-manifold $M$ and to every $n$-cobordism $W$ sets of fields $\mathcal{F}(M)$ and $\mathcal{F}(W)$, respectively, such that certain properties hold.
Fields on a cobordism can be restricted to subcobordisms and to codimension $1$ submanifolds.
Moreover, fields behave in a desirable way with respect to the action of homeomorphisms and disjoint union.
Last but not least, fields are requested to glue under the gluing of cobordisms.
Next, we consider a system $\mathbb{T}$ of action functionals (or action exponentials) on fields $\mathcal{F}$ with values in a fixed strict monoidal category $\textbf{C}$.
In \Cref{fields and action functional}, we will specifically take $\textbf{C} = \textbf{cBr}$.
The notion of action functional is inspired by the exponential of the action that appears in the integrand of the Feynman path integral, and satisfies the following axioms.
To every $n$-cobordism $W$ one associates a map $\mathbb{T}_{W} \colon \mathcal{F}(W) \rightarrow \operatorname{Mor}(\textbf{C})$ in such a way that disjoint union of cobordisms is reflected by tensor product of morphisms in $\textbf{C}$, and gluing of cobordisms is reflected by composition of morphisms in $\textbf{C}$.
More precisely, one requires that $\mathbb{T}_{W}(f) = \mathbb{T}_{W'}(f|_{W'}) \otimes \mathbb{T}_{W''}(f|_{W''})$ for fields $f$ on the disjoint union $W = W' \sqcup W''$ of cobordisms $W'$ and $W''$, and $\mathbb{T}_{W}(f) = \mathbb{T}_{V}(f|_{V}) \circ \mathbb{T}_{U}(f|_{U})$ for fields $f$ on the gluing $W = U \cup_{N} V$ along $N$ of cobordisms $U$ from $M$ to $N$ and $V$ from $N$ to $P$.
Furthermore, the action functional is invariant under the action of homeomorphisms.
Eventually, let us describe the process of quantization.
For this purpose, we fix a system $\mathcal{F}$ of fields, a \textbf{C}-valued system $\mathbb{T}$ of action functionals on the fields, and a complete semiring $S$.
Following Section 4 in \cite{ban2}, one first constructs a complete additive monoid $Q = Q_{S}(\textbf{C})$ from the semiring $S$ and the strict monoidal category $\textbf{C}$.
The elements of $Q$ are just maps $\operatorname{Mor}(\textbf{C}) \rightarrow S$.
Then, one exploits the completeness of $S$ to define two different multiplications on $Q$.
As a result, one obtains a pair $(Q^{c}, Q^{m})$ of generally non-commutative complete semirings.
Multiplication in $Q^{c}$ is based on the composition of morphisms in $\textbf{C}$, whereas multiplication in $Q^{m}$ exploits the monoidal structure of $\textbf{C}$.
Next, as explained in Section 6 of \cite{ban2}, one assigns to every $n$-cobordism $W$ the composition $T_{W} \colon \mathcal{F}(W) \rightarrow Q$ of $\mathbb{T}_{W} \colon \mathcal{F}(W) \rightarrow \operatorname{Mor}(\textbf{C})$ with the map $\operatorname{Mor}(\textbf{C}) \rightarrow Q$ that assigns to every morphism $\gamma$ in $\textbf{C}$ its characteristic function $\chi_{\gamma}$.
Finally, the state sum $Z_{W} \colon \mathcal{F}(\partial W) \rightarrow Q$ is defined on a boundary condition $f \in \mathcal{F}(\partial W)$ as
\begin{displaymath}
Z_{W}(f) = \sum_{F \in \mathcal{F}(W, f)} T_{W}(F) \in Q,
\end{displaymath}
where the sum ranges over all fields $F$ on $W$ that extend $f$, i.e., $F|_{\partial W} = f$.
We note that $Z_{W}$ is well-defined due to the completeness of $Q$.
In analogy with the quantum Hilbert state from physics, the state module $Z(M)$ of a closed $n$-manifold $M$ consists of all maps (``states'') $\mathcal{F}(M) \rightarrow Q$ that satisfy a certain constraint equation.
It can be shown that $Z_{W}$ satisfies the constraint equation and is thus an element of the state module $Z(\partial W)$.
Furthermore, the state modules and state sums thus defined can be shown to satisfy Banagl's axioms of a positive TFT, including the essential gluing axiom.
For a topologically meaningful choice of fields and action functionals the state sum $Z_{W}$ provides an invariant of $n$-cobordisms $W$ that is interesting for further investigation.

\subsection{Fold fields and \textbf{cBr}-valued actions}\label{fields and action functional}

Fix an integer $n \geq 2$.
In this section, we specify the fields and actions that will determine our modification of the $n$-dimensional positive TFT constructed in \cite{ban3}.
All manifolds considered (with or without boundary) will be smooth, that is, differentiable of class $C^{\infty}$.

\subsubsection{Cobordisms}\label{cobordisms}

We recall from Section 7.1 of \cite{ban3} the terminology concerning manifolds and cobordisms.

From now on, we use the notation $M$, $N$, $P$ etc. for closed $(n-1)$-dimensional manifolds.
Fix an integer $D \geq 2n+1$.
We will always assume that any $M$ is smoothly embedded in $\mathbb{R}^{D}$, and that every connected component of $M$ is contained in a hyperplane of the form $\{k\} \times \mathbb{R}^{D-1}$ for some $k \in \left\{0, 1, 2, \dots\right\}$.

\begin{definition}\label{kobordismus}
A \textit{cobordism} from $M$ to $N$ is a compact $n$-dimensional smoothly embedded manifold $W \subset \left[0, 1\right] \times \mathbb{R}^{D}$ with the following properties: 
\begin{enumerate}[(1)]
\item the boundary of $W$ is $\partial W = M \sqcup N$, where $M \subset \mathbb{R}^{D} = \left\{0\right\} \times \mathbb{R}^{D}$ is the \emph{ingoing boundary} and $N \subset \mathbb{R}^{D} = \left\{1\right\} \times \mathbb{R}^{D}$ is the \emph{outgoing boundary},
\item the interior of $W$ satisfies $W \setminus \partial W \subset (0, 1) \times \mathbb{R}^{D}$,
\item there exists $0 < \varepsilon < \frac{1}{2}$ such that $W \cap \left[0, \varepsilon\right] \times \mathbb{R}^{D} = \left[0, \varepsilon\right] \times M$ and $W \cap \left[1 - \varepsilon, 1\right] \times \mathbb{R}^{D} = \left[1-\varepsilon, 1\right] \times N$ are product embeddings (any such $\varepsilon$ is referred to as a \textit{cylinder scale}), and
\item every connected component of $W$ is contained in a set of the form $\left[0, 1\right] \times \left\{k\right\} \times \mathbb{R}^{D-1}$ for some $k \in \left\{0, 1, 2, \dots\right\}$.
\end{enumerate}
\end{definition}

The advantage of working with embedded cobordisms $W \subset \left[0, 1\right] \times \mathbb{R}^{D}$ is that every such cobordism is naturally equipped with a \emph{time function} $\omega \colon W \rightarrow \left[0, 1\right]$ given by the restriction of the projection $\left[0, 1\right] \times \mathbb{R}^{D} \rightarrow \left[0, 1\right]$ to $W$.
For every regular value $t \in \left[0, 1\right]$ of the time function $\omega \colon W \rightarrow \left[0, 1\right]$ the preimage $\omega^{-1}(t)$ is a smoothly embedded codimension $1$ submanifold of $W$.

\subsubsection{System of fold fields}\label{System of fold fields}
Our theory will use exactly the same definition of fold fields on $n$-cobordisms that is used by Banagl in the original construction.
Thus, in this section we will outline the content of Section 7.2 of \cite{ban3}.
We also use the same sets of fields on closed $(n-1)$-manifolds although their definition relies on our modified action functional (see the end of \Cref{system of cbr-valued action functionals}).

The construction of fold fields on an $n$-dimensional cobordism $W$ is based on the notion of fold maps from $W$ into the plane $\mathbb{R}^{2} \cong \mathbb{C}$.
By definition, a \emph{fold map} of an $n$-manifold $X$ without boundary into the plane is a smooth map $F \colon X \rightarrow \mathbb{R}^{2}$ such that for every point $x \in X$ there exist coordinate charts centered at $x$ and $F(x)$ in which $F$ takes one of the following two normal forms:
\begin{align}\label{fold map normal form}
(t, x_{1}, \dots, x_{n-1}) \mapsto \begin{cases}
(t, x_{1}) &\text{(\emph{regular point} of $F$)}, \\
\left(t, -x_{1}^{2} - \dots - x_{i}^{2} + x_{i+1}^{2} + \dots + x_{n-1}^{2}\right) &\text{(\emph{fold point} of $F$)}.
\end{cases}
\end{align}
Let $S(F)$ denote the set of fold points of a fold map $F \colon X \rightarrow \mathbb{R}^{2}$.
It can be shown that $S(F) \subset X$ is a smoothly embedded $1$-dimensional submanifold that is closed as a subset, and that $F$ restricts to an immersion $S(F) \rightarrow \mathbb{R}^{2}$.
In analogy with the Morse index of non-degenerate critical points, there is the following notion of an \emph{(absolute) index} for fold points, which is a well-known application of the concept of intrinsic derivative (see Section VI.3 in \cite[p. 149ff]{gol}).

\begin{proposition}\label{proposition absolute index}
To any fold map $F \colon X \rightarrow \mathbb{R}^{2}$ one can associate a well-defined locally constant map
$$
\iota_{F} \colon S(F) \rightarrow \mathbb{N}, \qquad \iota_{F}(x) = \operatorname{min}\{i, n-1-i\},
$$
where $i \in \{0, \dots, n-1\}$ is the number of minus signs that appear in the local normal form of fold points (\ref{fold map normal form}).
\end{proposition}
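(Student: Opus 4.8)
The plan is to recognize the integer $i$ in the fold normal form (\ref{fold map normal form}) as the index of a coordinate-free nondegenerate quadratic form attached to each fold point --- its \emph{intrinsic Hessian} --- and then to read off both assertions from elementary properties of that form. Fix $x\in S(F)$. At a fold point the differential $dF_{x}$ has rank $n-1$, so $K_{x}:=\ker(dF_{x})\subset T_{x}X$ has dimension $n-1$ and $C_{x}:=T_{F(x)}\mathbb{R}^{2}/\operatorname{im}(dF_{x})$ has dimension $1$. Viewing $dF$ as an $\mathbb{R}^{2}$-valued $1$-form near $x$, I would set, for $u,v\in K_{x}$,
\[
H_{x}(u,v)=\pi\bigl(d(dF(\widetilde{v}))_{x}(u)\bigr)\in C_{x},
\]
where $\widetilde{v}$ is any local vector field extending $v$, $dF(\widetilde{v})$ is the resulting $\mathbb{R}^{2}$-valued function, and $\pi\colon\mathbb{R}^{2}\to C_{x}$ is the projection. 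This is the classical second intrinsic derivative (see Section VI.3 of \cite{gol}).

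First I would verify that $H_{x}$ is well defined and symmetric. Independence of the extension $\widetilde{v}$ holds because the difference of two extensions is a vector field vanishing at $x$, and differentiating such a field and then applying $dF_{x}$ lands in $\operatorname{im}(dF_{x})$, which $\pi$ annihilates; symmetry follows from $d(dF(\widetilde{v}))_{x}(u)-d(dF(\widetilde{u}))_{x}(v)=dF_{x}([\widetilde{u},\widetilde{v}](x))\in\operatorname{im}(dF_{x})$. Then I would compute $H_{x}$ in the normal form (\ref{fold map normal form}): identifying $K_{x}$ with $\operatorname{span}(\partial_{x_{1}},\dots,\partial_{x_{n-1}})$ and choosing the generator of the line $C_{x}$ induced by the second coordinate direction, $H_{x}$ becomes the Hessian of $-x_{1}^{2}-\dots-x_{i}^{2}+x_{i+1}^{2}+\dots+x_{n-1}^{2}$, hence a nondegenerate quadratic form on $K_{x}$ of Sylvester index $i$ with respect to that generator.

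From this, well-definedness of $\iota_{F}(x)$ is immediate: the data $K_{x}$, $C_{x}$, $H_{x}$ are intrinsic, and the only remaining choice is a generator of the $1$-dimensional space $C_{x}$; two generators differ by a nonzero real scalar, so the induced nondegenerate quadratic form on $K_{x}$ has a well-defined pair of indices $\{p,\,n-1-p\}$, unchanged for positive scalars and swapped for negative ones. Hence $\min\{p,\,n-1-p\}$ is choice-free, and by the previous paragraph it equals $\min\{i,\,n-1-i\}$, so $\iota_{F}(x)$ is unambiguous. For local constancy I would use that $S(F)$ is a smooth $1$-submanifold on which $\operatorname{rank}(dF)\equiv n-1$, so $K=\ker(dF)|_{S(F)}$ and $C=\operatorname{coker}(dF)|_{S(F)}$ are smooth vector bundles over $S(F)$ and $x\mapsto H_{x}$ is a smooth, fiberwise nondegenerate section of $\operatorname{Sym}^{2}K^{*}\otimes C$; restricting to an interval in a connected component of $S(F)$ and trivializing the line bundle $C$ there by a nowhere-zero section turns $H$ into a continuous path of nondegenerate symmetric bilinear forms on the fibers of $K$, and since the Sylvester index of a nondegenerate form cannot jump along a continuous path, $\min\{i,\,n-1-i\}$ is locally constant on $S(F)$.

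The step I expect to be the main obstacle is establishing that $H_{x}$ is genuinely coordinate-free --- the independence-of-extension computation together with the careful bookkeeping of the $1$-dimensional cokernel $C_{x}$ --- since this is exactly where the potentially nontrivial ambiguity $i\leftrightarrow n-1-i$ enters, and (through the possible nonorientability of the cokernel line along a circle component of $S(F)$) the reason the index must be phrased as $\min\{i,\,n-1-i\}$ rather than $i$ itself. We only need the implication ``normal form $\Rightarrow$ $H_{x}$ nondegenerate of index pair $\{i,n-1-i\}$'', not its converse, so no appeal to the Morin--Thom characterization of folds is required.
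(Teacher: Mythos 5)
Your argument is correct and is precisely the intrinsic-derivative construction that the paper invokes by citing Section VI.3 of \cite{gol}: the second intrinsic derivative gives a nondegenerate quadratic form on $\ker(dF_{x})$ with values in the cokernel line, and the sign ambiguity of a generator of that line is exactly what forces the reduced index $\min\{i,\,n-1-i\}$, with local constancy following as you say. One slip to fix: at a fold point $dF_{x}$ has rank $1$ (corank $1$), not rank $n-1$ — the kernel and cokernel dimensions $n-1$ and $1$ that you actually use are the correct ones, so the rest of the argument is unaffected.
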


Let $W$ be an $n$-dimensional cobordism from $M$ to $N$ in the sense of \Cref{kobordismus}.
A smooth map $F \colon W \rightarrow \mathbb{R}^{2}$ is called \emph{fold map} if $F$ has for some $\varepsilon > 0$ an extension to a fold map
$$
\widetilde{F} \colon ((-\varepsilon, 0] \times M) \cup_{\{0\} \times M} W \cup_{\{1\} \times N} ([1, 1+\varepsilon) \times N) \rightarrow \mathbb{R}^{2}.
$$
Given a fold map $F \colon W \rightarrow \mathbb{R}^{2}$, the intersection $S(\widetilde{F}) \cap W$ does not depend on the choice of the fold map extension $\widetilde{F}$, and will in the following be denoted by $S(F)$.
Furthermore, for an open subset $U \subset \partial W$ we write $S(F) \pitchfork U$ if $S(\tilde{F}) \pitchfork U$ for some (and hence, any) fold map extension $\tilde{F}$ of $F$.
If $S(F) \pitchfork \partial W$, then $S(F) \subset W$ is a $1$-dimensional smoothly embedded compact submanifold with boundary $\partial S(F) = S(F) \cap \partial W$.
In this case, we write $\iota_{F} \colon S(F) \rightarrow \mathbb{N}$ for the restriction of the absolute index $\iota_{\widetilde{F}} \colon S(\widetilde{F}) \rightarrow \mathbb{N}$ of \Cref{proposition absolute index} for some (and hence, any) fold map extension $\tilde{F}$ of $F$.
(The above notions can be shown to be independent of the choice of $\widetilde{F}$ by using the characterization of fold maps in terms of transversality given in Definition 4.1 in \cite[p. 87]{gol}.)

Let $\omega \colon W \rightarrow [0, 1]$ denote the time function associated to $W$ (see \Cref{cobordisms}).

\begin{definition}
Given a fold map $F \colon W \rightarrow \mathbb{C}$, we set
\begin{displaymath}
\pitchfork(F) = \left\{t \in \left[0, 1\right]; \; t \text{ is a regular value of } \omega, \text{ and } S(F) \pitchfork \omega^{-1}(t)\right\} \quad \subset \left[0, 1\right].
\end{displaymath}
\end{definition}

\begin{definition}\label{definition generic imaginary parts}
A fold map $F \colon W \rightarrow \mathbb{C}$ has \emph{generic imaginary parts} over $t \in \left[0, 1\right]$ if the restriction $\operatorname{Im} \circ F| \colon S(F) \cap \omega^{-1}(t) \rightarrow \mathbb{R}$ is injective. Let
\begin{displaymath}
\operatorname{GenIm}(F) = \left\{t \in \left[0, 1\right]; \; F \text{ has generic imaginary parts over } t\right\} \quad \subset \left[0, 1\right].
\end{displaymath}
\end{definition}

For $k \in \left\{0, 1, 2, \dots\right\}$ let $F(k)$ denote the restriction of a fold map $F \colon W \rightarrow \mathbb{C}$ to the part of $W$ that lies in $\left[0, 1\right] \times \left\{k\right\} \times \mathbb{R}^{D-1}$ (see \Cref{kobordismus}(4)):
\begin{displaymath}
F(k) = F| \colon W \cap (\left[0, 1\right] \times \left\{k\right\} \times \mathbb{R}^{D-1}) \rightarrow \mathbb{C}.
\end{displaymath}

Finally, fields on $W$ are fold maps $F \colon W \rightarrow \mathbb{C}$ with certain properties concerning the subsets $\pitchfork(F(k))$ and $\operatorname{GenIm}(F(k))$ of $\left[0, 1\right]$.

\begin{definition}[Definition 7.9 in \cite{ban3}]\label{fold field}
A \textit{fold field} on $W$ is a fold map $F \colon W \rightarrow \mathbb{C}$ such that for all $k \in \left\{0, 1, 2, \dots\right\}$ the following conditions hold:
\begin{enumerate}[$(1)$]
\item $0, 1 \in \; \pitchfork(F(k)) \cap \operatorname{GenIm}(F(k))$, and
\item $\operatorname{GenIm}(F(k))$ is residual in $[0, 1]$.
(That is, $\operatorname{GenIm}(F(k))$ contains the intersection of a countable family of dense open subsets of $[0, 1]$.)
\end{enumerate}
\end{definition}

Condition $(1)$ is exploited in the construction of the \textbf{Br}-valued action functional $\mathbb{S}$ in Section 7.3 of \cite{ban3} (as well as in our modified construction in \Cref{system of cbr-valued action functionals}).
Condition $(2)$ is crucial for the proof of the indispensable gluing theorem (see Section 7.7 in \cite{ban3}).

Let $\mathcal{F}(W)$ denote the set of all fold fields on $W$.
If $W = \emptyset$, then one puts $\mathcal{F}(W) = \{\ast\}$ (set with a single element).
Fields on closed $(n-1)$-dimensional manifolds will be introduced at the end of the following \Cref{system of cbr-valued action functionals}, which completes the definition of the system $\mathcal{F}$ of fields.

\subsubsection{System of $\mathbf{cBr}$-valued action functionals}\label{system of cbr-valued action functionals}

Banagl exploits singularity theory of fold maps into the plane to construct a system $\mathbb{S}$ of $\mathbf{Br}$-valued action functionals (see Section 7.3 in \cite{ban3}).
Namely, for every $n$-cobordism $W$ he constructs a function $\mathbb{S} \colon \mathcal{F}(W) \rightarrow \operatorname{Mor}(\mathbf{Br})$ assigning to every fold field on $W$ a morphism in \textbf{Br} that encodes the combinatorial information of the $1$-dimensional singular set of the fold map.
In the present section, we modify the original construction by replacing the Brauer category \textbf{Br} with its chromatic enrichment \textbf{cBr}.
The idea is to capture not only the singular patterns provided by the singular sets of fold fields, but also to remember the indices of fold lines by using labels from the set $\mathbb{N}$.
Hence, we construct a system $\overline{\mathbb{S}}$ of \textbf{cBr}-valued action functionals which is a lift of $\mathbb{S}$ under the forgetful map $\operatorname{Mor}(\mathbf{cBr}) \rightarrow \operatorname{Mor}(\mathbf{Br})$.

Let $W$ be an $n$-cobordism from $M$ to $N$.
We construct the function $\overline{\mathbb{S}} \colon \mathcal{F}(W) \rightarrow \operatorname{Mor}(\mathbf{cBr})$ as follows.
If $W$ is empty, then we set $\overline{\mathbb{S}}(\ast) = \operatorname{id}_{(\left[0\right], c_{\emptyset})}$.
Next suppose that $W$ is non-empty and entirely contained in a set of the form $\left[0, 1\right] \times \left\{k\right\} \times \mathbb{R}^{D-1}$, where $k \in \left\{0, 1, 2, \dots\right\}$.
Let $F \; (= F(k)) \in \mathcal{F}(W)$ be a field on $W$.
By condition (1) for fold fields (see \Cref{fold field}), we have $0, 1 \in \pitchfork(F)$, so that the intersections $S(F) \cap M$ and $S(F) \cap N$ are compact manifolds of dimension $0$.
Furthermore, since $F$ has generic imaginary parts over $0$ and $1$ (see \Cref{definition generic imaginary parts}), the composition $\operatorname{Im} \circ F \colon W \rightarrow \mathbb{R}$ restricts to injective maps on both $S(F) \cap M$ and $S(F) \cap N$.
Let $m$ and $m'$ denote the number of points in $S(F) \cap M$ and $S(F) \cap N$, respectively.
Then, we obtain orderings $S(F) \cap M = \left\{p_{1}, \dots, p_{m}\right\}$ and $S(F) \cap N = \left\{q_{1}, \dots, q_{m'}\right\}$ which are uniquely determined by requiring that $(\operatorname{Im} \circ F)(p_{i}) < (\operatorname{Im} \circ F)(p_{j})$ if and only if $i < j$, and $(\operatorname{Im} \circ F)(q_{i}) < (\operatorname{Im} \circ F)(q_{j})$ if and only if $i < j$.
The resulting bijections $S(F) \cap M \cong M[m]$, $p_{i} \mapsto i$, and $S(F) \cap N \cong M[m']$, $q_{i} \mapsto i$, are exactly the same as those described in the original construction of $\mathbb{S}$.
(Note that the notation $M[m] = [m]$ is used to indicate that $[m]$ is considered as a $0$-submanifold of $\mathbb{R}^{1}$.)
We define maps $c \colon [m] \rightarrow \mathbb{N}$ and $c' \colon [m'] \rightarrow \mathbb{N}$ by assigning to each point $x \in [m] = M[m] \cong S(F) \cap M$ and $x' \in [m'] = M[m'] \cong S(F) \cap N$ the index of the fold map $F$ at $x$ and $x'$, respectively (see \Cref{proposition absolute index}).
So far, we have constructed objects $(\left[m\right], c)$ and $(\left[m'\right], c')$ in \textbf{cBr}.
The desired morphism $\overline{\mathbb{S}}(F) \colon (\left[m\right], c) \rightarrow (\left[m'\right], c')$ in \textbf{cBr} is now represented in the sense of \Cref{remark cbr intuition} by the embedded tangle $S(F) \subset [0, 1] \times \mathbb{R}^{3}$ that is defined in exactly the same manner as described in the construction of $\mathbb{S}$, together with the labeling given by $\iota_{F}$ (see \Cref{proposition absolute index}).
That is, every component of $S(F)$ with non-empty boundary is embedded as a smooth arc that connects the corresponding points in $(\left\{0\right\} \times M\left[m\right]) \cup (\left\{1\right\} \times M\left[m'\right])$.
(For components of $S(F)$ without boundary one may choose an arbitrary embedding into $(0, 1) \times \mathbb{R}^{3}$.)
Finally, for an arbitrary non-empty cobordism $W$, we define $\overline{\mathbb{S}}(F) = \bigotimes_{k=0}^{\infty}\overline{\mathbb{S}}(F(k))$.
(Note that the tensor product is actually finite because $W$ is compact.)
This completes our construction of a system $\overline{\mathbb{S}}$ of \textbf{cBr}-valued action functionals which lifts $\mathbb{S}$ under the forgetful map $\operatorname{Mor}(\mathbf{cBr}) \rightarrow \operatorname{Mor}(\mathbf{Br})$.
Note that Lemma 7.12 in \cite{ban3} remains valid when replacing $\mathbb{S}$ with $\overline{\mathbb{S}}$ in the formulation.
That is, given a fold field $F$ on $W$ and some $t \in (0, 1)$ such that $t \in \pitchfork(F(k)) \cap \operatorname{GenIm}(F(k))$ for all $k \in \{0, 1, \dots\}$, $F$ restricts to fold fields $F_{\leq t}$ on $W \cap ([0, t] \times \mathbb{R}^{D})$ and $F_{\geq t}$ on $W \cap ([1-t, 1] \times \mathbb{R}^{D})$, and we have $\overline{\mathbb{S}}(F) = \overline{\mathbb{S}}(F_{\leq t}) \circ \overline{\mathbb{S}}(F_{\geq t})$ in \textbf{cBr}.

Finally, fields on a closed $(n-1)$-manifold $M$ are defined to be certain fold fields on the cylinder $[0, 1] \times M \subset [0, 1] \times \mathbb{R}^{D}$, i.e., the trivial cobordism from $M$ to $M$.
Namely, when $M$ is non-empty, we put
$$
\mathcal{F}(M) = \{f \in \mathcal{F}([0, 1] \times M); \; \overline{\mathbb{S}}(f) = 1 \in \operatorname{Mor}(\mathbf{cBr})\},
$$
where $1$ denotes some identity morphism in \textbf{cBr}.
Note that a fold field $f \in \mathcal{F}([0, 1] \times M)$ satisfies $\overline{\mathbb{S}}(f) = 1$ in \textbf{cBr} if and only if $\mathbb{S}(f) = 1$ in \textbf{Br}.
If $M = \emptyset$, then one puts $\mathcal{F}(M) = \{\ast\}$ (set with a single element).
Hence, the set of fields on closed $(n-1)$-manifold remains unchanged when replacing $\mathbb{S}$ with $\overline{\mathbb{S}}$.
In particular, Lemma 7.13 and Lemma 7.14 (additivity axiom) in \cite{ban3} remain valid when replacing $\mathbb{S}$ with our modified \textbf{cBr}-valued action functional $\overline{\mathbb{S}}$ in the formulation.

\subsubsection{Linearization}\label{linearization}
It can be advantageous to linearize a given category-valued system of action functionals because linear categories are easier to handle as pointed out in Section 8 of \cite{ban2}.
During the process of quantization in \Cref{quantization} below, we will employ a linearization $\overline{\mathbb{T}}$ of our system $\overline{\mathbb{S}}$ of \textbf{cBr}-valued action functionals from \Cref{system of cbr-valued action functionals}.
Such a linearization $\overline{\mathbb{T}}$ is a system of \textbf{Vect}-valued action functionals that is defined by means of a fixed linear representation $Y \colon \mathbf{cBr} \rightarrow \mathbf{Vect}$ by assigning to every $n$-cobordism $W$ the composition
\begin{displaymath}
\overline{\mathbb{T}}_{W} \colon \mathcal{F}(W) \stackrel{\overline{\mathbb{S}}}{\longrightarrow} \operatorname{Mor}(\textbf{cBr}) \stackrel{Y}{\longrightarrow} \operatorname{Mor}(\textbf{Vect}).
\end{displaymath}
Since the linear representation $Y \colon \mathbf{cBr} \rightarrow \mathbf{Vect}$ can be chosen to be faithful due to \Cref{main theorem}, there is no loss of informational content when working with the corresponding linearization of $\overline{\mathbb{S}}$.
As a result, our theory yields a refinement of Banagl's aggregate invariant of homotopy spheres which will be studied in \Cref{aggregate invariant}.

\subsection{Quantization}\label{quantization}
The purpose of the present section is to apply the process of quantization over the Boolean semiring $S = \mathbb{B}$ (see \Cref{example boolean semiring} below) to the system $\mathcal{F}$ of fold fields of \Cref{System of fold fields} and our system $\overline{\mathbb{T}}$ of \textbf{Vect}-valued action functionals on $\mathcal{F}$ (see \Cref{linearization}) along Banagl's general framework of positive TFTs outlined in \Cref{general framework}.
Hence, we obtain a refinement of Banagl's high-dimensional positive TFT defined on smooth cobordisms.
In \Cref{aggregate invariant}, we will study the associated aggregate invariant of homotopy spheres.

First of all, \Cref{on semirings} below provides the necessary algebraic background on semirings.
In \Cref{idempotent profinite completion}, we will modify the algebraic process of profinite idempotent completion (see Section 6 in \cite{ban3}) to represent loops of different colors in \textbf{cBr} by a countable family of loop parameters.
Finally, we proceed to define our positive TFT $\overline{Z}$ in \Cref{state module}, where we will specify the state modules $\overline{Z}(M)$ of closed $(n-1)$-manifolds $M$, and define the state sums of $n$-cobordisms $W$ as certain elements $\overline{Z}_{W} \in \overline{Z}(\partial W)$.

\subsubsection{Semirings and semimodules}\label{on semirings}
We collect here a number of concepts from the theory of semirings and semimodules that are relevant for the process of quantization.
For a detailed background, we refer to \cite{eil, kro, sak, gol85, kar}.
Our summary is based on the presentations in Section 2 of \cite{ban2} and Section 4 of \cite{ban3}.

Recall that a (commutative) monoid is a triple $M = (M, \ast, e)$, where $M$ is a set equipped with a (commutative) associative binary operation $\ast$ and a two-sided identity element $e \in M$, that is, $e \ast m = m \ast e = m$ for all $m \in M$.
A semiring is a tuple $S = (S, +, \cdot, 0, 1)$, where $S$ is a set together with two binary operations $+$ and $\cdot$ and two elements $0, 1 \in S$ such that $(S, +, 0)$ is a commutative monoid, $(S, \cdot, 1)$ is a monoid, the multiplication $\cdot$ distributes over the addition from either side, and $0$ is absorbing, i.e. $0 \cdot s = 0 = s \cdot 0$ for every $s \in S$.
The semiring S is called commutative if the monoid $(S,\cdot,1)$ is commutative.
A morphism of semirings sends $0$ to $0$, $1$ to $1$ and respects addition and multiplication.
Fix a semiring $S$.
A (left) $S$-semimodule is a commutative monoid $M = (M,+,0_{M})$ together with a scalar multiplication $S \times M \rightarrow M$, $(s,m) \rightarrow sm$, such that for all $r, s \in S$, $m, n \in M$, we have $(rs)m = r(sm)$, $r(m+n) = rm+rn$, $(r+s)m = rm+sm$, $1m = m$, and $r0_{M} = 0_{M} = 0m$.
Given a morphism $\varphi \colon S \rightarrow T$ of semirings, it is clear that $T$ becomes a $S$-semimodule via $st = \varphi(s)t$.

A monoid $(M, \ast, e)$ is called idempotent if $m \ast m = m$ for all elements $m \in M$.
The semiring $(S, +, \cdot, 0, 1)$ is idempotent if $(S,+,0)$ is an idempotent monoid.
A semimodule is called idempotent if its underlying additive monoid is idempotent.

Next, we discuss the important notion of Eilenberg-completeness \cite[p. 125]{eil} for semirings and semimodules (see also \cite{kar, kro}).
A complete monoid is a commutative monoid $(M,+,0)$ together with an assignment $\Sigma$, called a summation law, which assigns to every family $(m_{i})_{i \in I}$, indexed by an arbitrary set $I$, an element $\sum_{i \in I} m_{i}$ of $M$ (called the sum of the $m_{i}$), such that
$$
\sum_{i \in \emptyset} m_{i} = 0, \quad \sum_{i \in \{1\}} m_{i} = m_{1}, \quad \sum_{i \in \{1, 2\}} m_{i} = m_{1} + m_{2},
$$
and for every partition $I = \bigcup_{j \in J} I_{j}$, we have
$$
\sum_{j \in J}\left(\sum_{i \in I_{j}} m_{i} \right) = \sum_{i \in I} m_{i}.
$$
A complete semiring is a semiring $S$ for which $(S,+,0, \Sigma)$ is a complete monoid, and infinite distributivity holds, that is,
$$
\sum_{i \in I} s s_{i} = s \left(\sum_{i \in I} s_{i}\right), \qquad \sum_{i \in I} s_{i} s = \left(\sum_{i \in I} s_{i}\right) s.
$$
A semimodule $M$ over a commutative semiring $S$ is called complete if its underlying additive monoid is equipped with a summation law that makes it complete as a commutative monoid, and infinite distributivity
$$
\sum_{i \in I} sm_{i} = s \left(\sum_{i \in I} m_{i}\right)
$$
holds for every $s \in S$ and every family $(m_{i})_{i \in I}$ in $M$.
If $\varphi \colon S \rightarrow T$ is a morphism of semirings and $T$ is complete as a semiring, then $T$ can be easily seen to be complete as an $S$-semimodule.

We will also need the following notion of continuity for idempotent complete semirings (cf. \cite{kro, sak, gol85, kar}).
Here, we only state the definition, and refer to the summary preceding Proposition 4.2 in \cite{ban3} for more details.
Observe that any idempotent monoid $(M, \ast, e)$ admits a natural partial order $\leq$ given by $m \leq m'$ if and only if $m + m' = m'$.
An idempotent complete monoid $(M,+,0, \Sigma)$ is continuous if for all families $(m_{i})_{i \in I}$, $m_{i} \in M$, and for all $c \in M$, $\sum_{i \in F} m_{i} \leq c$ for all finite $F \subset I$ implies $\sum_{i \in I} m_{i} \leq c$.
An idempotent complete semiring (semimodule) is called continuous if its underlying additive monoid is continuous.

It is useful to note that the product $\prod_{i \in I}M_{i}$ of a family $\{M_{i}\}_{i \in I}$ of continuous idempotent complete monoids is a continuous idempotent complete monoid.

\begin{example}\label{example boolean semiring}
The minimal example of a semiring that is not a ring is given by the Boolean semiring $\mathbb{B}$.
This is the set $\mathbb{B} = \left\{0, 1\right\}$ equipped with addition defined by $1 + 1 = 1$ and multiplication given by $0 \cdot 0 = 0$ (where $0$ and $1$ serve as identity elements for addition and multiplication, respectively).
Distributivity holds, but in $\mathbb{B}$ there exists no additive inverse for $1$.
We leave it to the reader to check that the commutative semiring $\mathbb{B}$ is idempotent, complete, and continuous.
\end{example}

\subsubsection{Profinite idempotent completion}\label{idempotent profinite completion}
As mentioned before, our input data for the process of quantization are the Boolean semiring $S = \mathbb{B}$ (see \Cref{example boolean semiring}), the system $\mathcal{F}$ of fold fields (see \Cref{System of fold fields}), and our system $\overline{\mathbb{T}}$ of \textbf{Vect}-valued action functionals on $\mathcal{F}$ (see \Cref{linearization}).
Recall from \Cref{general framework} that the first step of quantization consists in the construction of a pair $(Q^{c}, Q^{m})$ of generally non-commutative complete semirings.
In principle, we could follow the general construction provided in Section 4 of \cite{ban2}.
However, we would like to take care of the additional structure given by the natural action of the polynomial semiring of colored loops on the morphism sets of the chromatic Brauer category.
As a consequence, our state sum invariant will take values in power series in several variables with matrix coefficients.
Banagl has implemented this algebraic process of profinite idempotent completion in Section 6 of \cite{ban3} for the natural action of the polynomial semiring of loops on the morphism sets of the Brauer category \textbf{Br}.
Since we work instead with the chromatic Brauer category \textbf{cBr}, we will in the following discuss the modifications of Banagl's construction in detail.

First, we give a brief outline of Banagl's profinite idempotent completion.
Note that the sets $\operatorname{Hom}_{\mathbf{Br}}([m], [m'])$ have the special property that they are naturally equipped with the action $\tau^{i}\varphi = \varphi \otimes \lambda^{\otimes i}$ of the (multiplicatively written) monoid $\mathbb{N} = \{\tau^{i}; \; i \in \mathbb{N}\}$.
Fix a linear representation $U \colon \mathbf{Br} \rightarrow \mathbf{Vect}$ and write $V = U([1])$ and $\widehat{\lambda} = U(\lambda) \in \mathbb{R}$.
Then, the subset
$$
H_{m,n} = U(\operatorname{Hom}_{\mathbf{Br}}([m], [m']))
$$
of the real vector space $\operatorname{Hom}_{\mathbf{Vect}}(V^{\otimes m}, V^{\otimes m'})$ inherits an action of the monoid $\mathbb{N}$ via $\tau^{i}f = \widehat{\lambda}^{i}f$.
Given a set $A$, let $FM(A)$ denote the free commutative monoid generated by $A$.
In particular, $FM(H_{m, n})$ has the structure of a $\mathbb{N}[\tau]$-semimodule by Lemma 4.1 in \cite{ban3}.
A $\mathbb{N}[\tau]$-semimodule is given by the algebraic tensor product
$$
Q(H_{m, n}) = FM(H_{m, n}) \otimes_{\mathbb{N}[\tau]} \mathbb{B}[[q]],
$$
where $\mathbb{B}[[q]]$ is the semiring of formal power series associated to the Boolean semiring $\mathbb{B}$.
In Lemma 6.7 in \cite{ban3}, it is shown that $Q(H_{m, n})$ is isomorphic as a $\mathbb{N}[\tau]$-semimodule to a finite sum of copies of $\mathbb{B}[[q]]$, so that its elements consist of a number of power series in the \emph{loop parameter} $q$.
(This fact can be derived more abstractly by using the concept of minimal shells of projectively finite subsets of a real vector space, see Definition 6.1 in \cite{ban3}.)
Finally, the \emph{profinite idempotent completion} of the set $U(\operatorname{Mor}(\mathbf{Br}))$ is the $\mathbb{N}[\tau]$-semimodule
$$
Q = Q(U) = \prod_{m, m' \in \mathbb{N}}Q(H_{m, n}).
$$
Provided that the underlying functor $U$ is chosen to be faithful on loops, the additive monoid $(Q, +, 0)$ can be promoted to idempotent complete semirings $Q^{c}$ (the \emph{composition semiring}, see Proposition 6.12 in \cite{ban3}) and $Q^{m}$ (the \emph{monoidal semiring}, see Proposition 6.14 in \cite{ban3}).
It can be shown that the semirings $Q^{c}$ and $Q^{m}$ are both continuous (see Proposition 6.15 in \cite{ban3}).
Continuity is exploited in \cite{ban3} to check several axioms of positive TFTs, namely the behavior of state sums under disjoint union (Proposition 7.22), and the gluing axiom (Theorem 7.26). \\

We proceed to explain how the process of profinite idempotent completion applies to our setting.
When using the chromatic Brauer category instead of \textbf{Br}, we need to replace the semimodule $\mathbb{B}[[q]]$ over $\mathbb{N}[\tau]$ by the semimodule $\mathbb{B}[[\mathfrak{q}]] = \mathbb{B}[[q_{0}, q_{1}, \dots]]$ over $\mathbb{N}[\mathfrak{t}] = \mathbb{N}[\tau_{0}, \tau_{1}, \dots]$.
Here, the different parameters represent loops of different labels in \textbf{cBr}.
Let us introduce the semirings $\mathbb{N}[\tau]$ and $\mathbb{B}[[q]]$ (compare Section 4 in \cite{ban3}).
Set $\underline{\mathbb{N}} = \bigoplus_{i = 0}^{\infty}\mathbb{N}$, which is a commutative monoid with respect to component wise addition and identity element $0 = (0, 0, \dots)$.
In general, a \emph{(formal) power series} in a countable number of indeterminates $\mathfrak{q} = q_{0}, q_{1}, \dots$ and with coefficients in the Boolean semiring $\mathbb{B}$ is a function $a \colon \underline{\mathbb{N}} \rightarrow \mathbb{B}$, written as a formal sum $\sum_{\nu \in \underline{\mathbb{N}}} a(\nu) \mathfrak{q}^{\nu}$, where $\mathfrak{q}^{\nu}$ denotes the (finite) product $\prod_{s=0}^{\infty}q_{s}^{\nu_{s}}$.
The element $a(\nu)$ is referred to as the coefficient of $\mathfrak{q}^{\nu}$.
Let $\mathbb{B}[[\mathfrak{q}]]$ be the set of all power series over $\mathbb{B}$ having a countable number of indeterminates $\mathfrak{q} = q_{0}, q_{1}, \dots$.
We write $0$ for the power series $a$ with $a(\nu) = 0$ for all $\nu$, and $1$ for the power series $a$ with $a(0)=1$ and $a(\nu) = 0$ for all $\nu \neq 0$.
Define an addition on power series by $a+b=c$, where $c(\nu) = a(\nu) + b(\nu)$ for all $\nu$.
Define a multiplication on power series by the Cauchy product, that is, $a \cdot b = c$ where $c(\nu) = \sum_{\mu+\kappa = \nu}a(\mu)b(\kappa)$.
Then, $(\mathbb{B}[[\mathfrak{q}]], +, \cdot, 0, 1)$ is a commutative idempotent semiring, the semiring of power series over $\mathbb{B}$ in a countable number of indeterminates.
In a similar way, using finite sums rather than power series, one defines the polynomial semiring $\mathbb{N}[\mathfrak{t}]$ in a countable number of indeterminates $\mathfrak{t} = \tau_{0}, \tau_{1}, \dots$.
Note that $\mathbb{B}[[\mathfrak{q}]]$ is a $\mathbb{N}[\mathfrak{t}]$-semimodule via the semiring morphism $\mathbb{N}[\mathfrak{t}] \rightarrow \mathbb{B}[[\mathfrak{q}]]$ that extends the unique semiring morphism $\mathbb{N} \rightarrow \mathbb{B}$ by $\tau_{k} \mapsto q_{k}$, $k \in \mathbb{N}$.
It can be shown that $\mathbb{B}[[\mathfrak{q}]]$ is a complete semiring, and is hence complete as a $\mathbb{N}[\mathfrak{t}]$-semimodule.
Furthermore, the idempotent complete semiring $\mathbb{B}[[\mathfrak{q}]]$ can be shown to be continuous.
(Both claims follow easily from the material of \Cref{on semirings} by using that the monoidal structure of $\mathbb{B}[[\mathfrak{q}]]$ is isomorphic to the product $\prod_{\nu \in \underline{\mathbb{N}}}\mathbb{B}$.)

Returning to the category \textbf{cBr}, we observe that the $k$-loops $\lambda_{(k)}$, $k \in \mathbb{N}$, induce an action of the (multiplicatively written) commutative monoid $\underline{\mathbb{N}} = \{\mathfrak{t}^{\nu}; \; \nu \in \underline{\mathbb{N}}\}$ on the morphism sets
$$
\operatorname{Hom}_{\mathbf{cBr}}(([m],c), ([m'],c'))
$$
via $(\mathfrak{t}^{\nu}, \varphi) \mapsto \mathfrak{t}^{\nu} \varphi = \left(\bigotimes_{k=0}^{\infty} \lambda_{(k)}^{\otimes \nu_{k}}\right) \otimes \varphi$.
Fix a linear representation $Y \colon \mathbf{cBr} \rightarrow \mathbf{Vect}$.
Let us write $V_{k} = Y(([1], \underline{k}))$ and $\widehat{\lambda}_{(k)} = Y(\lambda_{(k)}) \in \mathbb{R}$.
Then, the subset
$$
H_{([m],c), ([m'],c')} = Y(\operatorname{Hom}_{\mathbf{cBr}}(([m],c), ([m'],c')))
$$
of the real vector space
$$\operatorname{Hom}_{\mathbf{Vect}}(Y(([m], c)), Y(([m'], c'))) = \operatorname{Hom}_{\mathbf{Vect}}(V_{c(1)} \otimes \dots \otimes V_{c(m)}, V_{c'(1)} \otimes \dots \otimes V_{c'(m')})$$
inherits an action of the monoid $\underline{\mathbb{N}}$ via $\mathfrak{t}^{\nu}f = \left(\prod_{k=0}^{\infty}\widehat{\lambda}_{(k)}^{\nu_{k}}\right) \cdot f$.
In analogy with Lemma 4.1 in \cite{ban3}, it follows that $FM(H_{([m],c), ([m'],c')})$, the free commutative monoid generated by $H_{([m],c), ([m'],c')}$, has the structure of a $\mathbb{N}[\mathfrak{t}]$-semimodule via
$$\sum_{\nu\in\underline{\mathbb{N}}} m_{\nu}\mathfrak{t}^{\nu} \cdot \sum_{j} \alpha_{j} f_{j} = \sum_{\nu, j} (m_{\nu} \alpha_{j}) (\mathfrak{t}^{\nu} f_{j}), \quad m_{\nu}, \alpha_{j} \in \mathbb{N}, f_{j} \in H_{([m],c), ([m'],c')}.$$
Using the algebraic tensor product of semimodules over the commutative semiring $\mathbb{N}[\mathfrak{t}]$ (see \cite{kat, kat2}, and compare Section 4 in \cite{ban3}) we can now define a $\mathbb{N}[\mathfrak{t}]$-semimodule by
$$
\overline{Q}(H_{([m],c), ([m'],c')}) = FM(H_{([m],c), ([m'],c')}) \otimes_{\mathbb{N}[\mathfrak{t}]} \mathbb{B}[[\mathfrak{q}]].
$$
Let $OP_{([m],c), ([m'],c')}$ denote the (finite) set of loop-free morphisms $([m],c) \rightarrow ([m'],c')$ in \textbf{cBr}.
From now on, we suppose that the linear representation $Y \colon \mathbf{cBr} \rightarrow \mathbf{Vect}$ is chosen to be faithful (see \Cref{linearization}).
Then, it can be shown that $\overline{Q}(H_{([m],c), ([m'],c')})$ is isomorphic in the category of $\mathbb{N}[\mathfrak{t}]$-semimodules to the product of copies of $\mathbb{B}[[\mathfrak{q}]]$ indexed by the elements of $OP_{([m],c), ([m'],c')}$.
In fact, in analogy to Lemma 6.6 in \cite{ban3}, we have the following
\begin{lemma}
Every element in $FM(H_{([m],c), ([m'],c')})$ can be written as
\begin{align}\label{standard presentation}
\sum_{\sigma=1}^{r} p_{\sigma}(\mathfrak{t})Y(\varphi_{\sigma})
\end{align}
for suitable polynomials $p_{\sigma}(\mathfrak{t}) \in \mathbb{N}[\mathfrak{t}]$, where $\varphi_{1}, \dots, \varphi_{r}$ is the list of elements of $OP_{([m],c), ([m'],c')}$.
If $Y$ is faithful, then the presentation in the form (\ref{standard presentation}) is unique, and thus $FM(H_{([m],c), ([m'],c')})$ is a free $\mathbb{N}[\mathfrak{t}]$-semimodule of rank $r$.
\end{lemma}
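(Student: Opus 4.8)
The statement is equivalent to asserting that the $\mathbb{N}[\mathfrak{t}]$-semimodule homomorphism
\[
\Phi\colon \bigoplus_{\sigma=1}^{r}\mathbb{N}[\mathfrak{t}] \longrightarrow FM\bigl(H_{([m],c),([m'],c')}\bigr),\qquad (p_{\sigma})_{\sigma}\longmapsto \sum_{\sigma=1}^{r}p_{\sigma}(\mathfrak{t})\,Y(\varphi_{\sigma}),
\]
is surjective in general, and is moreover injective once $Y$ is faithful; the plan is to treat these two statements separately. (If $OP_{([m],c),([m'],c')}=\emptyset$, i.e.\ $r=0$, the lemma is vacuous, so assume $r\geq 1$.)

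\emph{Surjectivity (existence of a presentation).} An arbitrary element of $FM(H_{([m],c),([m'],c')})$ is a finite sum $\sum_{j}\alpha_{j}f_{j}$ with $\alpha_{j}\in\mathbb{N}$ and $f_{j}=Y(\psi_{j})$ for morphisms $\psi_{j}\colon([m],c)\to([m'],c')$ in $\mathbf{cBr}$. First I would rewrite each $\psi_{j}$ in its normal form $\psi_{j}=\bigl(\bigotimes_{k}\lambda_{(k)}^{\otimes l_{k}}\bigr)\otimes(\beta\circ\varphi_{0}\circ\alpha)$ from \Cref{lemma normal form}, with $\alpha,\beta$ isomorphisms and $\varphi_{0}$ loop-free. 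Since tensoring and composing with isomorphisms cannot split off a factor $\lambda_{(k)}$, the morphism $\beta\circ\varphi_{0}\circ\alpha$ is itself loop-free, hence equal to one of $\varphi_{1},\dots,\varphi_{r}$, say $\varphi_{\sigma(j)}$. Applying the strict monoidal functor $Y$ and using $Y(\lambda_{(k)})=d_{k}$ (\Cref{lemma trace}) — so that tensoring with $Y(\lambda_{(k)})$ amounts to multiplication by the scalar $d_{k}$ — one gets $f_{j}=\bigl(\prod_{k}d_{k}^{l_{k}}\bigr)Y(\varphi_{\sigma(j)})=\mathfrak{t}^{l_{j}}Y(\varphi_{\sigma(j)})$ in $H_{([m],c),([m'],c')}$, where $l_{j}=(l_{k})_{k}\in\underline{\mathbb{N}}$. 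Hence $\alpha_{j}f_{j}=(\alpha_{j}\mathfrak{t}^{l_{j}})\cdot Y(\varphi_{\sigma(j)})$ in $FM(H_{([m],c),([m'],c')})$, and collecting the summands with a common index $\sigma$ yields a presentation of the asserted form \eqref{standard presentation}. This step is routine and does not use faithfulness.

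\emph{Injectivity (uniqueness of the presentation when $Y$ is faithful).} Since $FM(H_{([m],c),([m'],c')})$ is the free commutative monoid on the set $H_{([m],c),([m'],c')}$, two presentations $\sum_{\sigma}p_{\sigma}(\mathfrak{t})Y(\varphi_{\sigma})$ and $\sum_{\sigma}q_{\sigma}(\mathfrak{t})Y(\varphi_{\sigma})$, expanded as $\mathbb{N}$-combinations of the elements $\mathfrak{t}^{\nu}Y(\varphi_{\sigma})$ ($\nu\in\underline{\mathbb{N}}$, $1\leq\sigma\leq r$) of $H_{([m],c),([m'],c')}$, are equal if and only if their coefficient functions on $H_{([m],c),([m'],c')}$ agree. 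So everything reduces to showing that the map $\underline{\mathbb{N}}\times\{1,\dots,r\}\to H_{([m],c),([m'],c')}$, $(\nu,\sigma)\mapsto\mathfrak{t}^{\nu}Y(\varphi_{\sigma})=\bigl(\prod_{k}d_{k}^{\nu_{k}}\bigr)Y(\varphi_{\sigma})$, is injective; granting this, equality of the two presentations forces $p_{\sigma}=q_{\sigma}$ for every $\sigma$. To prove injectivity, suppose $\bigl(\prod_{k}d_{k}^{\nu_{k}}\bigr)Y(\varphi_{\sigma})=\bigl(\prod_{k}d_{k}^{\nu'_{k}}\bigr)Y(\varphi_{\sigma'})$, set $a=\prod_{k}d_{k}^{\nu_{k}}>0$ and $a'=\prod_{k}d_{k}^{\nu'_{k}}>0$, so $Y(\varphi_{\sigma})=(a'/a)\,Y(\varphi_{\sigma'})$. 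Because $Y(\varphi^{\operatorname{op}})$ is an $\mathbb{R}$-linear function of $Y(\varphi)$ — it is built from $Y(\varphi)$ by the operations in \eqref{varphiop}, i.e.\ tensoring on both sides with identity maps and composing on both sides with the fixed linear maps $Y(e_{\cdots})$ and $Y(i_{\cdots})$ — it follows that $Y(\varphi_{\sigma}^{\operatorname{op}}\circ\varphi_{\sigma})=(a'/a)^{2}\,Y(\varphi_{\sigma'}^{\operatorname{op}}\circ\varphi_{\sigma'})$. Taking traces and invoking the computation in the proof of \Cref{y is a faithful functor} — which shows that $\operatorname{Tr}\bigl(Y(\varphi_{0}^{\operatorname{op}}\circ\varphi_{0})\bigr)$ equals the \emph{nonzero} number $\prod_{k}d_{k}^{\frac12(|c^{-1}(k)|+|c'^{-1}(k)|)}$ for \emph{every} loop-free $\varphi_{0}\colon([m],c)\to([m'],c')$ — I would conclude $(a'/a)^{2}=1$, hence $a=a'$. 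Then $Y(\varphi_{\sigma})=Y(\varphi_{\sigma'})$, so $\varphi_{\sigma}=\varphi_{\sigma'}$ (i.e.\ $\sigma=\sigma'$) by faithfulness of $Y$; and $a=a'$ means $\prod_{k}d_{k}^{\nu_{k}-\nu'_{k}}=1$, which by implication \eqref{condition} (valid since $Y$ is faithful, by \Cref{main theorem}) gives $\nu=\nu'$.

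\emph{Expected main obstacle.} The delicate point is ruling out $\sigma\neq\sigma'$ in the last paragraph: faithfulness of $Y$ only gives $Y(\varphi_{\sigma})\neq Y(\varphi_{\sigma'})$, whereas a priori $Y(\varphi_{\sigma})$ and $Y(\varphi_{\sigma'})$ could be positive scalar multiples of one another, in which case the loop action could redistribute polynomial coefficients between the indices $\sigma$ and $\sigma'$ and destroy uniqueness. The trace-rigidity argument is precisely what forecloses a nontrivial scaling, since $\operatorname{Tr}\bigl(Y(\varphi_{0}^{\operatorname{op}}\circ\varphi_{0})\bigr)$ is a nonzero quantity depending only on the domain and codomain of $\varphi_{0}$. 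Once this is settled, implication \eqref{condition} pins down the loop exponents and faithfulness pins down the loop-free part, so $\Phi$ is an isomorphism and $FM(H_{([m],c),([m'],c')})$ is a free $\mathbb{N}[\mathfrak{t}]$-semimodule of rank $r$. (As an alternative to the trace argument, one could instead first establish uniqueness of the normal form of \Cref{lemma normal form} directly, using \Cref{prop:loops} together with \Cref{faithfulOnLoops}, and then transport it across the faithful functor $Y$.)
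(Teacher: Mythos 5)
Your proposal is correct, and its existence half coincides with the paper's (rewrite each generator via the normal form of \Cref{lemma normal form}, apply $Y$, use $Y(\lambda_{(k)})=d_k$, and collect terms). Where you genuinely diverge is in the uniqueness half. Both arguments reduce, via the freeness of the commutative monoid $FM(H_{([m],c),([m'],c')})$, to injectivity of the map $(\nu,\sigma)\mapsto \mathfrak{t}^{\nu}Y(\varphi_{\sigma})$; but the paper disposes of this in one stroke by observing that $\mathfrak{t}^{\nu}Y(\varphi_{\sigma})=Y\bigl(\bigl(\bigotimes_{k}\lambda_{(k)}^{\otimes\nu_{k}}\bigr)\otimes\varphi_{\sigma}\bigr)$ and that the morphisms $\bigl(\bigotimes_{k}\lambda_{(k)}^{\otimes\nu_{k}}\bigr)\otimes\varphi_{\sigma}$ are pairwise distinct in $\mathbf{cBr}$ (uniqueness of the loops/loop-free decomposition, taken as evident from the tangle picture), so faithfulness of $Y$ finishes the argument. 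You instead prove the injectivity entirely inside $\mathbf{Vect}$: the scaling relation $a\,Y(\varphi_{\sigma})=a'\,Y(\varphi_{\sigma'})$ is excluded by the trace computation from the proof of \Cref{y is a faithful functor}, which shows $\operatorname{Tr}\bigl(Y(\varphi_{0}^{\operatorname{op}}\circ\varphi_{0})\bigr)$ is a fixed nonzero number depending only on $([m],c)$ and $([m'],c')$, and the loop exponents are then pinned down by implication \eqref{condition}. This is heavier than the paper's one-line appeal, but it buys something real: it does not presuppose uniqueness of the normal form in $\mathbf{cBr}$ (which \Cref{lemma normal form} does not state), and it explicitly forecloses the only failure mode — two loop-free morphisms having proportional, rather than equal, images — that a bare appeal to faithfulness would leave open. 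Your closing parenthetical (establish uniqueness of the decomposition in $\mathbf{cBr}$ first and transport it through the faithful $Y$) is essentially the paper's route. One small cosmetic point: citing \Cref{main theorem} for "faithful $\Rightarrow$ \eqref{condition}" is legitimate and non-circular here, but the easy direction is already stated just before \Cref{y is a faithful functor}, and faithfulness itself forces $d_k\geq 2$, so the hypothesis $d_k>0$ needed there is automatic.
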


\begin{proof}
The first claim follows from the fact that every element of the morphism set $\operatorname{Hom}_{\mathbf{cBr}}(([m],c), ([m'],c'))$ can be written as $\left(\bigotimes_{k} \lambda_{(k)}^{\otimes l_{k}}\right) \otimes \varphi_{\sigma}$ for some $\sigma \in \{1, \dots, r\}$ and a sequence $(l_{k})_{k \in \mathbb{N}} \in \bigoplus_{k} \mathbb{N}$ (see \Cref{lemma normal form}).
To show uniqueness of presentations in the form (\ref{standard presentation}), let us suppose that $\sum_{\sigma=1}^{r} p_{\sigma}(\mathfrak{t})Y(\varphi_{\sigma}) = 0$ for some polynomials $p_{\sigma}(\mathfrak{t}) = \sum_{\nu \in \underline{\mathbb{N}}}a_{\sigma}^{(\nu)}\mathfrak{t}^{\nu} \in \mathbb{N}[\mathfrak{t}]$.
Then, using the definition of $FM(H_{([m],c), ([m'],c')})$ and the faithfulness of $Y$, it follows from
$$
0 = \sum_{\sigma=1}^{r} \left(\sum_{\nu \in \underline{\mathbb{N}}}a_{\sigma}^{(\nu)}\mathfrak{t}^{\nu}\right)Y(\varphi_{\sigma}) = \sum_{\sigma=1}^{r} \sum_{\nu \in \underline{\mathbb{N}}}a_{\sigma}^{(\nu)}Y\left(\left(\bigotimes_{k=0}^{\infty} \lambda_{(k)}^{\otimes \nu_{k}}\right) \otimes\varphi_{\sigma}\right)
$$
that $a_{\sigma}^{(\nu)} = 0$ for all $\sigma \in \{1, \dots, r\}$, $\nu \in \underline{\mathbb{N}}$, and the claim follows.
\end{proof}

Finally, the \emph{profinite idempotent completion} of the set $Y(\operatorname{Mor}(\mathbf{cBr}))$ is the $\mathbb{N}[\tau]$-semimodule
$$
\overline{Q} = \overline{Q}(Y) = \prod_{([m],c), ([m'],c')}\overline{Q}(H_{([m],c), ([m'],c')}).
$$

Being the product of copies of $\mathbb{B}[[\mathfrak{q}]]$, each $\overline{Q}(H_{([m],c), ([m'],c')})$ is a continuous idempotent complete monoid.
We conclude that the additive monoid $(\overline{Q}, +, 0)$ is continuous, idempotent and complete as well.
Hence, $(\overline{Q}, +, 0)$ can be advanced to continuous idempotent complete semirings $\overline{Q}^{c}$ and $\overline{Q}^{m}$ in analogy with the construction in Section 6 in \cite{ban3}.

\subsubsection{State modules and state sums}\label{state module}
Let $\overline{Q}$ denote the profinite idempotent completion of the set $Y(\operatorname{Mor}(\mathbf{cBr}))$ associated to a fixed faithful linear representation $Y \colon \mathbf{cBr} \rightarrow \mathbf{Vect}$ as constructed in \Cref{idempotent profinite completion}.
We proceed to define our smooth positive TFT $\overline{Z}$.
The state module of a closed $(n-1)$-manifold is defined to be $\overline{Z}(M) = \{z \colon \mathcal{F}(M) \rightarrow \overline{Q}\}$.
By Proposition 3.1 in \cite{ban2}, $\overline{Z}(M)$ inherits the structure of a two-sided $\overline{Q}^{c}$-semialgebra and a two-sided $\overline{Q}^{c}$-semialgebra, and $\overline{Z}(M)$ is complete.
Then, it follows from the corresponding properties of $\overline{Q}$ that $\overline{Z}(M)$ is idempotent and continuous.
The construction of a contraction product
$$
\langle \cdot, \cdot \rangle \colon (\overline{Z}(M) \widehat{\otimes} \overline{Z}(N)) \times (\overline{Z}(N) \widehat{\otimes} \overline{Z}(P)) \rightarrow \overline{Z}(M) \widehat{\otimes} \overline{Z}(P)
$$
is analogous to the discussion in Section 7.4 of \cite{ban3}.
Here, $\widehat{\otimes}$ denotes the complete tensor product of complete idempotent continuous semimodules (see Section 5 in \cite{ban3}) rather than the algebraic tensor product $\otimes$ of function semimodules discussed in \cite{ban1}.

Let $W^{n}$ be a cobordism from $M$ to $N$ in the sense of \Cref{kobordismus}.
The state sum will be defined as an element $\overline{Z}_{W} \in \overline{Z}(M) \widehat{\otimes} \overline{Z}(N)$.
Fix a cylinder scale $\varepsilon_{W}$ for $W$.
Given a boundary condition $(f_{M}, f_{N}) \in \mathcal{F}(M) \times \mathcal{F}(N)$, we define
\begin{align*}
\mathcal{F}(W; f_{M}, f_{N}) &= \{F \in \mathcal{F}(W) | \; \exists \varepsilon(k), \varepsilon'(k) \in (0, \varepsilon_{W}) \colon \\
&F|_{[0,\varepsilon(k)] \approx M(k)} \approx f_{M}(k), F|_{[1-\varepsilon'(k),1]\times N(k)} \approx f_{N}(k), \forall k\},
\end{align*}
where the equivalence relation $\approx$ for fold fields on closed $(n-1)$-manifolds $X$ is defined as follows.
Two smooth maps $f \colon [a,b] \times X \rightarrow \mathbb{C}$ and $f' \colon [a',b'] \times X \rightarrow \mathbb{C}$ are equivalent, $f \approx f'$, if there exists a diffeomorphism $\xi \colon [a,b] \rightarrow [a',b']$ with $\xi(a) = a'$ such that $f(t,x) = f'(\xi(t),x)$ for all $(t,x) \in [a,b] \times X$ (see Definition 7.18 in \cite{ban3}).
On $(f_{M}, f_{N})$ the state sum $\overline{Z}_{W}$ is then defined as
$$
\overline{Z}_{W}(f_{M}, f_{N}) = \sum_{F \in \mathcal{F}(f_{M}, f_{N})} \overline{\mathbb{T}}_{W}(F),
$$
which is a well-defined element of the complete semiring $\overline{Q}$.
Note that, when $\overline{\mathbb{S}}(F) \colon ([m], c) \rightarrow ([m'],c')$ in \textbf{cBr}, the element $\overline{\mathbb{T}}_{W}(F)$ is supposed to be identified with the element $\overline{\mathbb{T}}_{W}(F) \otimes 1 \in \overline{Q}(H_{([m],c), ([m'],c')}) \subset \overline{Q}$.

In close analogy with the further steps in \cite{ban3}, one can prove that our assignment $\overline{Z}$ is in fact a positive topological field theory.
Namely, following Section 7.6 in \cite{ban3}, one checks the correct behavior of our state sum under disjoint union.
Moreover, following Section 7.7 in \cite{ban3}, one proves the essential gluing formula $\overline{Z}_{W} = \langle \overline{Z}_{W'}, \overline{Z}_{W''}\rangle$ (see Theorem 7.26 in \cite{ban3}), where $W$ is the result of gluing a cobordism $W'$ from $M$ to $N$ with a cobordism $W''$ from $N$ to $P$ along $N$.
Note that the preparatory results Proposition 7.23, Lemma 7.24, and Proposition 7.25 in \cite{ban3} need only be modified by replacing the \textbf{Br}-valued action functional $\mathbb{S}$ with the \textbf{cBr}-valued action functional $\overline{\mathbb{S}}$ in the formulation.
Diffeomorphism invariance $\varphi_{\ast}(\overline{Z}_{W}) = \overline{Z}_{W'}$ (see Theorem 9.16 in \cite{ban3}) of our state sum under diffeomorphisms $\varphi \colon \partial W \rightarrow \partial W'$ that can be extended to so-called time consistent diffeomorphisms $\varphi \colon W \rightarrow W'$
can be shown along the lines of Section 9 in \cite{ban3}.
In particular, Lemma 9.12 and Lemma 9.14 in \cite{ban3} remain valid when replacing $\mathbb{S}$ with $\overline{\mathbb{S}}$ in the formulation.
The map $\varphi_{\ast} \colon \overline{Z}(\partial W) \rightarrow \overline{Z}(\partial W')$ can then be defined on a function $z \colon \mathcal{F}(\partial W) \rightarrow \overline{Q}$ and a field $g \in \mathcal{F}(\partial W')$ by
$$\varphi_{\ast}(z)(g) = z(g \circ (\operatorname{id}_{[0, 1]} \times \varphi)) \in \overline{Q}.$$

\subsection{The aggregate invariant and exotic Kervaire spheres}\label{aggregate invariant}
Positive TFTs have been created by Banagl \cite{ban2} with the intention to provide new topological invariants for high-dimensional manifolds.
In this section, we explain how our positive TFT $\overline{Z}$ constructed in the previous section can be used to assign to every homotopy sphere $M$ its \emph{aggregate invariant} $\overline{\mathfrak{A}}(M)$, an element of the complete semiring $\overline{Q}$ from \Cref{idempotent profinite completion}.
The construction of $\overline{\mathfrak{A}}$ is analogous to that of the aggregate invariant $\mathfrak{A}$ studied Section 10 in \cite{ban3}.
While the invariant $\mathfrak{A}$ is known to distinguish exotic spheres from the standard sphere (see Corollary 10.4 in \cite{ban3}), we will indicate briefly that the invariant $\overline{\mathfrak{A}}$ can distinguish exotic Kervaire spheres from other exotic spheres in infinitely many dimensions.

Fix a closed $(n-1)$-manifold $M$ which is homeomorphic (but not necessarily diffeomorphic) to the sphere $S^{n-1}$.
Without loss of generality, we assume in the following that $S^{n-1}$ and $M$ are smoothly embedded in $\{0\} \times \mathbb{R}^{D-1}$ (compare \Cref{cobordisms}).
From now on, we suppose that $n-1 \geq 5$.
Then, it is well-known that $M$ admits Morse functions with exactly two non-degenerate critical points, namely one minimum and one maximum.
Given any diffeomorphism $\xi \colon [0, 1] \rightarrow [a, b]$ with $\xi(0) = a$, and any Morse function $f_{M} \colon M \rightarrow \mathbb{R}$ with exactly two non-degenerate critical points, we observe that the map
$$
\overline{f}_{M} \colon [0, 1] \times M \rightarrow \mathbb{R}^{2}, \qquad \overline{f}_{M}(t, x) = (\xi(t), f_{M}(x)),
$$
is a fold field on $M$ (see \Cref{fold field}).
Let $C_{2}(M) \subset \mathcal{F}(M)$ denote the (non-empty) subset of all such maps $\overline{f}_{M}$.
Fix an element $\overline{f}_{S} \in C_{2}(S^{n-1})$ of the form $\overline{f}_{S} = \operatorname{id}_{[0, 1]} \times f_{S}$.
Let us write $\operatorname{Cob}(S^{n-1}, M)$ for the collection of all oriented cobordisms from $S^{n-1}$ to $M$ that are embedded in $[0, 1] \times \{0\} \times \mathbb{R}^{D-1}$ (compare property (4) of \Cref{kobordismus}).
Since $M$ is homeomorphic to $S^{n-1}$, it can be shown that $\operatorname{Cob}(S^{n-1}, M)$ is non-empty.
(The proof, which exploits the fact that any such $M$ is parallelizable, is given in Lemma 10.1 in \cite{ban3}.)
Now, for any cobordism $W \in \operatorname{Cob}(S^{n-1}, M)$ and any fold field $\overline{f}_{M} \in C_{2}(M)$, the state sum $\overline{Z}_{W} \in \overline{Z}(S^{n-1}) \widehat{\otimes} \overline{Z}(M)$ of \Cref{state module} can be evaluated at $(\overline{f}_{S}, \overline{f}_{M}) \in \mathcal{F}(S^{n-1}) \times \mathcal{F}(M)$ to yield an element $\overline{Z}_{W}(\overline{f}_{S}, \overline{f}_{M})$ in the complete semiring $\overline{Q}$ from \Cref{idempotent profinite completion} that is associated to a faithful linear representation $Y \colon \mathbf{cBr} \rightarrow \mathbf{Vect}$.
Hence, summation in the complete semiring $\overline{Q}$ yields a well-defined element
$$
\overline{\mathfrak{A}}(M) := \sum_{\overline{f}_{M} \in C_{2}(M)} \sum_{W \in \operatorname{Cob}(S^{n-1}, M)} \overline{Z}_{W}(\overline{f}_{S}, \overline{f}_{M}) \in \overline{Q}.
$$

In conclusion, we outline an application to Kervaire spheres, which are a concrete family of homotopy spheres that can be obtained from a plumbing construction as follows (see \cite[p. 162]{mich}).
The unique Kervaire sphere $\Sigma_{K}^{n-1}$ of dimension $n-1 = 4r+1$ can be defined as the boundary of the parallelizable $(4r+2)$-manifold given by plumbing together two copies of the tangent disc bundle of $S^{2r+1}$.
According to the classification theorem of homotopy spheres (see Theorem 6.1 in \cite[pp. 123f]{luck}), as well as recent work of Hill-Hopkins-Ravenel \cite{hhr} on the Kervaire invariant one problem, it is known that $\Sigma_{K}^{n-1}$ is an exotic sphere, i.e., homeomorphic but not diffeomorphic to $S^{n-1}$, except when $n-1 \in \{5, 13, 29, 61, 125\}$.

Note that, according to Remark 6.3 in \cite{wra3}, there are infinitely many dimensions of the form $n-1 \equiv 13 \; (\operatorname{mod} 16)$ in which there exist exotic spheres that are not diffeomorphic to the Kervaire sphere $\Sigma^{n-1}_{K}$.
The following result shows that our aggregate invariant $\overline{\mathfrak{A}}$ can distinguish exotic Kervaire spheres from other exotic spheres in infinitely many dimensions.
We give a sketch of the proof by referring to the results of \cite{wra}.
A detailed proof is beyond the scope of this paper, and will appear elsewhere.

\begin{theorem}\label{theorem kervaire spheres}
Suppose that $n-1 \equiv 13 \; (\operatorname{mod} 16)$ and $n-1 \geq 237$.
Then, an exotic $(n-1)$-sphere $\Sigma^{n-1}$ is diffeomorphic to the Kervaire sphere if and only if $\overline{\mathfrak{A}}(\Sigma^{n-1}) = \overline{\mathfrak{A}}(\Sigma_{K}^{n-1})$.
\end{theorem}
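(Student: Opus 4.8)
The ``only if'' direction is formal. The diffeomorphism invariance of the positive TFT $\overline{Z}$ established in \Cref{state module}, together with the fact that a diffeomorphism $M \cong M'$ of homotopy spheres induces bijections $\operatorname{Cob}(S^{n-1}, M) \cong \operatorname{Cob}(S^{n-1}, M')$ and $C_{2}(M) \cong C_{2}(M')$ compatible with the relevant state sums, shows that $\overline{\mathfrak{A}}(M) \in \overline{Q}$ depends only on the diffeomorphism type of $M$; so $\Sigma^{n-1} \cong \Sigma_{K}^{n-1}$ forces $\overline{\mathfrak{A}}(\Sigma^{n-1}) = \overline{\mathfrak{A}}(\Sigma_{K}^{n-1})$. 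The content of the theorem is the converse, and the plan is to pin down the class of $\Sigma^{n-1}$ in the group of homotopy $(n-1)$-spheres in two stages: first showing $\Sigma^{n-1}$ is exotic using the underlying \textbf{Br}-theory, and then using the chromatic (fold-index) data to place $\Sigma^{n-1}$ inside the subgroup $bP_{n}$ of homotopy spheres bounding parallelizable manifolds.

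For the first stage I would exploit that $\overline{\mathbb{S}}$ lifts Banagl's \textbf{Br}-valued action functional $\mathbb{S}$ along the forgetful functor $\mathbf{cBr} \rightarrow \mathbf{Br}$. The semiring morphism $\mathbb{B}[[\mathfrak{q}]] \rightarrow \mathbb{B}[[q]]$ sending every $q_{k}$ to $q$ induces a morphism $\overline{Q} \rightarrow Q$ from the chromatic profinite idempotent completion of \Cref{idempotent profinite completion} to the one used by Banagl, and a routine check of the compatibility of the two quantization procedures after forgetting colors shows that this morphism carries $\overline{\mathfrak{A}}(M)$ to Banagl's aggregate invariant $\mathfrak{A}(M)$. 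Since $n - 1 \equiv 13 \pmod{16}$ and $n - 1 \geq 237$ excludes the exceptional dimensions $\{5, 13, 29, 61, 125\}$, the Kervaire sphere $\Sigma_{K}^{n-1}$ is exotic, so Corollary 10.4 in \cite{ban3} gives $\mathfrak{A}(\Sigma_{K}^{n-1}) \neq \mathfrak{A}(S^{n-1})$. Hence $\overline{\mathfrak{A}}(\Sigma^{n-1}) = \overline{\mathfrak{A}}(\Sigma_{K}^{n-1})$ already implies $\mathfrak{A}(\Sigma^{n-1}) \neq \mathfrak{A}(S^{n-1})$, so $\Sigma^{n-1}$ is an exotic sphere.

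For the second stage I would use that the chosen linear representation $Y \colon \mathbf{cBr} \rightarrow \mathbf{Vect}$ is faithful (\Cref{main theorem}), so that $\overline{\mathfrak{A}}(M)$ retains, for every cobordism $W$ from $S^{n-1}$ to $M$ and every extending fold field $F$ contributing to the state sum, the full morphism $\overline{\mathbb{S}}(F) \in \operatorname{Mor}(\mathbf{cBr})$, and in particular the multiset of reduced fold indices $\iota_{F} \in \{0, \dots, \lfloor (n-1)/2 \rfloor\}$ carried by the components of the singular tangle $S(F)$. Following the results of \cite{wra}, I would extract from this labeled data a diffeomorphism invariant of $M$ detecting whether $M$ bounds a compact manifold admitting a fold map into the plane all of whose fold lines avoid the middle index $\lfloor (n-1)/2 \rfloor = 2r$ (writing $n-1 = 4r+1$); by the relation between such fold maps and the Arf/Kervaire obstruction this is equivalent to $M \notin bP_{n}$. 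As the Kervaire sphere bounds the plumbing of two copies of the tangent disc bundle of $S^{2r+1}$ and so lies in $bP_{n}$, the hypothesis $\overline{\mathfrak{A}}(\Sigma^{n-1}) = \overline{\mathfrak{A}}(\Sigma_{K}^{n-1})$ forces $\Sigma^{n-1} \in bP_{n}$. Finally, in dimension $n = 4r+2$ with $n - 1 \equiv 13 \pmod{16}$ and $n - 1 \geq 237$ there is no framed manifold of Kervaire invariant one (Hill--Hopkins--Ravenel \cite{hhr}; these dimensions avoid $n \in \{2, 6, 14, 30, 62, 126\}$), so the classification of homotopy spheres (Theorem 6.1 in \cite{luck}) gives $bP_{n} \cong \mathbb{Z}/2$ with nontrivial element $[\Sigma_{K}^{n-1}]$. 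Since $\Sigma^{n-1}$ is exotic and lies in $bP_{n} = \{[S^{n-1}], [\Sigma_{K}^{n-1}]\}$, it is diffeomorphic to the Kervaire sphere; the role of the bound $n-1 \geq 237$ is to keep the statement nonvacuous, there being exotic non-Kervaire $(n-1)$-spheres in these dimensions by Remark 6.3 in \cite{wra3}.

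The hard part will be the middle-index detection step. One must show that the aggregate sum --- taken over all cobordisms $W$ and all extending fold fields $F$, and landing in the completed semiring $\overline{Q}$ --- actually retains the extremal labeled singular patterns needed, and that realizability of a fold map into $\mathbb{R}^{2}$ avoiding index $2r$ is governed exactly by membership in $bP_{n}$. Faithfulness of $Y$ disposes of the linearization step, but the singularity-theoretic core --- determining which morphisms of $\mathbf{cBr}$ arise as $\overline{\mathbb{S}}(F)$ as $(W, F)$ range over the relevant families, and invoking fold-map elimination theory to connect the attainable fold indices with the Kervaire--Arf invariant --- is the deepest ingredient, and is the reason the detailed argument is carried out in \cite{wra} rather than here.
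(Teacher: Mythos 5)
Your ``only if'' direction (diffeomorphism invariance) and your final appeal to $bP_{4r+2} \cong \mathbb{Z}/2$ away from the Kervaire-invariant-one dimensions (via \cite{hhr} and Theorem 6.1 of \cite{luck}) are unobjectionable, and your first stage is harmless but redundant, since exoticity of $\Sigma^{n-1}$ is a hypothesis of the theorem. The problem is the core ``middle-index detection step,'' which is a genuine gap and is also where you diverge from the paper. The paper does not argue via membership in $bP_{n}$ at all: it observes that the only nontrivial part of $\overline{\mathfrak{A}}(M)$ sits in the factor $\overline{Q}(H_{([2], \underline{0}), ([2], \underline{0})})$, isolates the $\mathbb{B}[[\mathfrak{q}]]$-summand indexed by the loop-free morphism $i_{(0)} \circ e_{(0)}$, and evaluates the single Boolean coefficient of $\mathfrak{q}^{\nu}$ with $\nu$ concentrated at the label $n/2$. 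That coefficient is $1$ exactly when some admissible fold field $F$ on some cobordism from $S^{n-1}$ to $\Sigma^{n-1}$ realizes the labeled pattern $\left(\bigotimes_{k}\lambda_{(k)}^{\nu_k}\right) \otimes (i_{(0)} \circ e_{(0)})$, i.e.\ one closed fold loop of that specific index, and Theorem 3.4.9 together with Corollary 10.1.4 of \cite{wra} assert that this realizability holds if and only if $\Sigma^{n-1}$ is diffeomorphic to $\Sigma_{K}^{n-1}$. So the detection used in the paper is a \emph{positive} realizability statement about one concrete labeled singular pattern, not an index-avoidance criterion.

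By contrast, your pivotal claim --- that one can extract from $\overline{\mathfrak{A}}(M)$ an invariant detecting whether $M$ bounds a compact manifold admitting a fold map into the plane whose fold lines avoid index $2r$, and that this is equivalent to $M \notin bP_{n}$ --- is neither proved nor supported by anything you cite; no such equivalence appears in \cite{wra}, and its direction is at best unclear (heuristically it is the nontrivial Arf obstruction of elements of $bP_{4r+2}$ that forces middle-index folds, but no theorem to this effect is invoked). Moreover, even granting such a criterion, you do not explain how it would be read off from $\overline{\mathfrak{A}}(M)$, which aggregates fold fields only on embedded cobordisms from $S^{n-1}$ to $M$ with the prescribed boundary conditions $\overline{f}_{S}$, $\overline{f}_{M}$, not on arbitrary null-cobordisms of $M$. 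Determining which labeled morphisms of $\mathbf{cBr}$ actually occur as $\overline{\mathbb{S}}(F)$ over these families is exactly the singularity-theoretic content that the paper delegates to the cited results of \cite{wra}; you acknowledge this as ``the hard part'' but leave it entirely open, so the converse direction of the theorem is not established by your argument.
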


\begin{proof}[Sketch of proof]
Recall from \Cref{idempotent profinite completion} that elements of $\overline{Q}$ are families of power series in $\mathbb{B}[[\mathfrak{q}]]$ which are indexed by the loop-free morphisms of $\mathbf{cBr}$.
It follows from the construction of the state sum $\overline{Z}_{W}$ (see \Cref{state module}) that non-trivial power series of the element $\overline{\mathfrak{A}}(M) \in \overline{Q}$ can only occur in the factor
$$
\overline{Q}(H_{([2], \underline{0}), ([2], \underline{0})}) = \mathbb{B}[[\mathfrak{q}]] \oplus \mathbb{B}[[\mathfrak{q}]] \oplus \mathbb{B}[[\mathfrak{q}]],
$$
where the three copies of $\mathbb{B}[[\mathfrak{q}]]$ correspond to the three possible loop-free morphisms $([2], \underline{0}) \rightarrow ([2], \underline{0})$ in \textbf{cBr}, namely $1_{([2], \underline{0})}$, $b_{(0), (0)}$, and $i_{(0)} \circ e_{(0)}$.
Let $\zeta(\Sigma^{n-1}) \in \mathbb{B}[[\mathfrak{q}]]$ denote the component of $\overline{\mathfrak{A}}(\Sigma^{n-1})$ that corresponds to the loop-free morphism $i_{(0)}\circ e_{(0)}$
Then, for every $\nu \in \underline{\mathbb{N}}$ the coefficient of $\mathfrak{q}^{\nu}$ in $\zeta(\Sigma^{n-1})$ is nonzero if and only if there exists a fold field $F \in \mathcal{F}(\overline{f}_{S}, \overline{f}_{\Sigma})$ such that $\overline{\mathbb{S}}(F) = \left(\bigotimes_{k=0}^{\infty} \lambda_{(k)}^{\nu_{k}}\right) \otimes (i_{(0)}\circ e_{(0)})$.
We choose $\nu$ such that $\nu_{j} = 1$ for $j = n/2$ and $\nu_{j} = 0$ for $j \neq n/2$.
Then, it follows from Corollary 10.1.4 and Theorem 3.4.9 in \cite{wra} that the coefficient of $\mathfrak{q}^{\nu}$ in $\zeta(\Sigma^{n-1})$ is $1$ whenever $\Sigma^{n-1}$ is diffeomorphic to $\Sigma^{n-1}_{K}$.
Conversely, if $\Sigma^{n-1}$ is not diffeomorphic to $\Sigma^{n-1}_{K}$, then Corollary 10.1.4 in \cite{wra} implies that the coefficient of $\mathfrak{q}^{\nu}$ in $\zeta(\Sigma^{n-1})$ is $0$.
\end{proof}

\subsection*{Acknowledgments}
The authors would like to thank the referee for invaluable comments that helped improving the paper.
The authors are grateful to Prof. Banagl for providing the initial motivation for this work.
The second author is a JSPS International Research Fellow (Postdoctoral Fellowships for Research in Japan (Standard)).
The second author was supported by JSPS KAKENHI Grant Number JP18F18752.
The second author was also partially supported by a scholarship of the German National Merit Foundation.

\bibliographystyle{amsplain}

\end{document}